\documentclass[11pt]{amsart}
\usepackage{amsmath,amssymb,amsthm,skull,mathrsfs} % insert showlabels here if needed
%\input xy
%\xyoption{all}

\usepackage{tikz} \usetikzlibrary{matrix,arrows,decorations.pathmorphing}

%\usepackage[refpage]{nomencl}
%\makenomenclature

%\usepackage{makeidx}
%\makeindex

%\usepackage[hypertex]{hyperref}
\usepackage{hyperref}

\usepackage[hmarginratio=1:1, totalheight=22 true cm, totalwidth=16 true cm]{geometry}
\usepackage{color}

\title{Strongly \'{e}tale difference algebras and Babbitt's decomposition}

\author{Ivan Toma\v{s}i\'{c}}
\address{School of Mathematical Sciences, Queen Mary University of London, E1 4NS, UK}
\email{i.tomasic@qmul.ac.uk}

\author{Michael Wibmer}
\address{Department of Mathematics, University of Pennsylvania, Philadelphia PA 19104-6395, USA}\email{wibmer@math.upenn.edu}

\thanks{The authors would like to thank the London Mathematical Society for supporting our work through a \emph{Research in Pairs} grant. }

\newtheorem{theo}{Theorem}[section]
\newtheorem{lemma}[theo]{Lemma}
\newtheorem{prop}[theo]{Proposition}
\newtheorem{cor}[theo]{Corollary}
\newtheorem{defi}[theo]{Definition}
\newtheorem{rem}[theo]{Remark}
\newtheorem{fact}[theo]{Fact}

\newtheorem{conj}[theo]{Conjecture}

\theoremstyle{definition}
\newtheorem{ex}[theo]{Example}

\newcommand{\ida}{\mathfrak{a}}

\newcommand{\ld}{\operatorname{ld}}
\newcommand{\core}{\operatorname{Core}}

\renewcommand{\l}{\langle}
\renewcommand{\r}{\rangle}

\newcommand{\s}{\sigma}

\newcommand{\N}{\mathbb{N}}
\newcommand{\C}{\mathbb{C}}

\newcommand{\hs}{{{}^\sigma\!}}

\newcommand{\pis}{\pi_0^\sigma}
\newcommand{\ks}{$k$-$\s$}
\newcommand{\ssetale}{{strongly $\sigma$-\'{e}tale}}

\def\Gal{\mathscr{G}}

\begin{document}

\maketitle

\begin{abstract}
 We introduce a class of \emph{strongly \'etale difference algebras}, %certain class of difference algebras 
 whose role in the study of difference equations is analogous to the role of \'{e}tale algebras in the study of algebraic equations. %We use these difference algebras to 
 We deduce an improved version of Babbitt's decomposition theorem and we present applications to difference algebraic groups and the compatibility problem. 
\end{abstract}

\section*{Introduction}

\subsection*{Strongly $\s$-\'etale algebras and strong core}

\'{E}tale algebras play an important role in commutative algebra and algebraic geometry. In this paper we initiate the study of suitable notions of \emph{\'etale} in difference algebra and geometry by 
introducing  \emph{strongly $\s$-\'etale difference algebras} as difference algebras that are (algebraically) \'etale and \emph{$\s$-separable}. We develop a comprehensive understanding of the class of difference algebras they constitute, and consequently we establish their rugged permanence properties. 

We define the \emph{strong core} of a difference algebra $R$ over a difference field $k$ as
the union 
$$\pis(R|k)$$ 
of all strongly $\s$-\'{e}tale difference subalgebras of $R$, and we show that it has good functorial properties. In particular,  the formation of the strong core is compatible with the tensor product and extension of the base difference field.

%Other difference analogs of \'{e}tale algebras than the notion of \ssetale{} algebras treated in this article have recently been studied in  @Ivan cite your papers and maybe say something more

Strongly $\sigma$-\'etale difference algebras are the best behaved among the variety of notions of \'etale extensions of difference rings studied in \cite{ive-tgs} (\'etale extensions of finite $\sigma$-type and ind-$\sigma$-\'etale extensions, with or without the $\sigma$-separability assumption) and \cite{ive-direct-tgs}  (directly-\'etale extensions). These more general notions will be studied %further 
in our forthcoming work.

\subsection*{Improved Babbitt's decomposition}

The framework of strongly $\s$-\'etale difference algebras and the strong core allows us to prove an enhanced variant of Babbitt's decomposition, one of the most important structure theorems in difference algebra. Babbitt's theorem was instrumental for extending specializations and in the study of compatibility of difference field extensions, see \cite[Section 5.4]{Levin:difference}. It has also been used in \cite{acfa2}, as well as \cite{ive-tgs} and \cite{ive-tgsacfa}.

The original Babbitt's decomposition (see \cite[Theorem 2.3] {Babbitt:FinitelyGeneratedPathologicalExtensionsOfDifferenceFields} or \cite[Theorem 5.4.13]{Levin:difference}) %is an important structure theorem for 
applies to finitely generated extensions of difference fields $L|K$ such that $K$ is inversive and the field extension $L|K$ is Galois.

The assumption that the base difference field $K$ is inversive is used in several steps of the known proofs of Babbitt's decomposition theorem and is somewhat of a hindrance for applying the theorem. We show here that the assumption that the base difference field is inversive can be dropped. This generalization is achieved by replacing the core of $L|K$ with the strong core $\pis(L|K)$.

By definition, for an arbitrary extension of difference fields $L|K$, the core of $L|K$ contains $\pis(L|K)$. On the other hand, we show that the core is \emph{$\sigma$-radicial} over the strong core.

An upshot is that we can shed light on the classical problem of compatibility of difference field extensions.   The classical compatibility theorem (see \cite[Theorem 5.4.22]{Levin:difference}) states that two extensions of difference fields $L|K$ and $L'|K$ are compatible if and only if their cores are compatible. We can actually show that $L|K$ and $L'|K$ are compatible if and only if $\pis(L|K)$ and $\pis(L'|K)$ are compatible.

\subsection*{Difference algebraic groups and connected components}

If $G$ is an (affine) algebraic group over a field $k$, then the union $\pi_0(k[G])$ of all \'{e}tale $k$-subalgebras of the coordinate ring $k[G]$ of $G$ is a Hopf-subalgebra of $k[G]$ that represents the quotient $G/G^o$ of $G$ by the connected component $G^o$ of $G$. Indeed, one may use $\pi_0(k[G])$ to define $G^o$ (see \cite[Section 6.7]{Waterhouse:IntroductiontoAffineGroupSchemes},  \cite[Chapter XIII, Def. 3.1]{Milne:BasicTheoryOfAffineGroupSchemes}).

Our initial motivation for studying the difference analog of \'{e}tale algebras was the desire to define the \emph{difference connected component} of an (affine) difference algebraic group $G$, i.e., $G$ is a group defined by algebraic difference equations. 
If $R$ is a difference Hopf algebra over a difference field $k$, the union of all difference subalgebras of $R$ that are \'{e}tale as $k$-algebras need not be a Hopf subalgebra of $R$, as shown in Example \ref{ex: core not Hopfalgebra}. In this article, however, we show that $\pis(R)$ is a Hopf subalgebra of $R$. This paves the way for the definition of the difference connected component of a difference algebraic group, see \cite[Section 4.2]{Wibmer:Habil}. 

Difference algebraic groups occur as Galois groups of linear differential or difference equations depending on a discrete parameter (\cite{DiVizioHardouinWibmer:DifferenceGaloisTheory},\cite{OvchinnikovWibmer:SigmaGalois}). We expect that, via these Galois theories, a better understanding of the components of difference algebraic groups will contribute to  a better understanding of the difference algebraic relations among the solutions of linear differential or difference equations.

%\medskip

\subsection*{Structure of the paper}

Let us describe the content of the article in more detail. In Section~\ref{section:one}, we discuss $\s$-separability, introduce \ssetale{} difference algebras and establish their basic properties. We define the strong core and show its functorial properties.
%Then we show that the union $\pis(R|k)$ of all \ssetale{} difference subalgebras of a difference algebra $R$ over a difference field $k$ has good functorial properties, e.g., the construction is compatible with the tensor product and extension of the base difference field. 
Finally, we explain the relation between $\pis(L|K)$ and the core of a difference field extension $L|K$.

In Section~\ref{section:babb} we establish the improved version of Babbitt's decomposition theorem (Theorem \ref{theo: Babbitt}). 

Section~\ref{section:appli} is devoted to applications of the developed theory. 
\begin{enumerate}
\item
As a contribution to the study of difference connected components of difference algebraic groups, we show that $\pis(R|k)$ is a Hopf subalgebra if $R$ is a difference Hopf algebra, respecting the appropriate finiteness properties.
\item Elaborating on the classical theory of difference field extensions, we present an application of our work to the compatibility problem.
\end{enumerate}

\section{Strongly $\s$-\'{e}tale difference algebras}\label{section:one}

We start by recalling the basic definitions and conventions from difference algebra. Standard references for difference algebra are \cite{Levin:difference} and \cite{Cohn:difference}. All rings are assumed to be commutative. A \emph{difference ring} (or \emph{$\s$-ring} \index{$\s$-ring} for short) is a ring $R$ together with a ring endomorphism $\s\colon R\to R$. A \emph{morphism of $\s$-rings} $R$ and $S$ is a morphism $\psi\colon R\to S$ of rings such that
%\[
%\xymatrix{
% R \ar_{\s}[d] \ar^-{\psi}[r] & S  \ar^{\s}[d] \\
% R \ar^-{\psi}[r] & S \\
%}
%\]
$$
 \begin{tikzpicture} 
\matrix(m)[matrix of math nodes, row sep=2em, column sep=2em, text height=1.5ex, text depth=0.25ex]
 {  |(11)|{R} & |(12)|{S} \\
     |(21)|{R} & |(22)|{S}\\}; 
\path[->,font=\scriptsize,>=to, thin]
(11) edge node[above]{$\psi$}  (12) edge node[left]{$\sigma$} (21)
(21) edge node[above]{$\psi$} (22)
(12) edge node[right]{$\sigma$} (22);
\end{tikzpicture}
$$
commutes. In this situation, we also say that $S$ is an \emph{$R$-$\s$-algebra}. Note that in contrast to \cite{Levin:difference} and \cite{Cohn:difference} we do not require $\s\colon R\to R$ to be injective. A $\s$-ring $R$ with $\s\colon R\to R$ injective is called
\emph{$\s$-reduced}. If $\s\colon R\to R$ is an automorphism, $R$ is called \emph{inversive}.

If $R$ and $S$ are $\s$-rings such that $R$ is a subring of $S$ and the inclusion map is a morphism of $\s$-rings, we say that $R$ is a \emph{$\s$-subring} of $S$.

A \emph{difference field} \index{difference field} (or \emph{$\s$-field} \index{$\s$-field} for short) is a difference ring whose underlying ring is a field. If $K$ and $L$ are $\s$-fields such that $K$ is a $\s$-subring of $L$, we also say that $L$ is a \emph{$\s$-field extension} of $K$, or that $L|K$ is an \emph{extension of $\s$-fields}, or that $K$ is a \emph{$\s$-subfield} of $L$.

Let $L|K$ be an extension of $\s$-fields and $F\subset L$ a subset. The smallest $\s$-subfield of $L$ that contains $K$ and $F$ is denoted by
$$K\langle F\rangle.$$
As a field extension of $K$ the $\s$-field $K\langle F\rangle$ is generated by all elements of the form $\s^i(f)$ with $i\in\mathbb{N}$ and $f\in F$. We call $K\langle F\rangle$ the \emph{$\s$-field $\s$-generated by $F$ over $K$}.\index{$\s$-field $\s$-generated by $F$ over $K$} If $L=K\langle F\rangle$ for a finite subset $F$ of $L$ we say that $L$ is \emph{finitely $\s$-generated as a $\s$-field extension of $K$}.

\medskip

Let $k$ be a difference ring. A \emph{morphism of \ks-algebras} is a morphism of $k$-algebras that is also a morphism of $\s$-rings.
If $R$ and $S$ are \ks-algebras $R\otimes_k S$ is naturally a $\s$-ring by $\s(r\otimes s)=\s(r)\otimes\s(s)$ for $r\in R$ and $s\in S$. %Indeed, $R\otimes_k S$ is the coproduct of $R$ and $S$ in the category of \ks-algebras.

A \emph{\ks-subalgebra} of a \ks-algebra $R$ is a $k$-subalgebra of $R$ that is also $\s$-subring of $R$. For a subset $F\subset R$, the smallest \ks-subalgebra of $R$ that contains $F$ is denoted by
$$k\{F\}.$$
It is called the \emph{\ks-subalgebra of $R$ $\s$-generated by $F$} (over $k$). As a $k$-algebra, $k\{F\}$ is generated by all elements of the form $\s^i(f)$ with $i\in\mathbb{N}$ and $f\in F$. A \ks-algebra $R$ is called \emph{finitely $\s$-generated} (over $k$) if there exists a finite set $F\subset R$ such that $R=k\{F\}$.

Let $k$ be a $\s$-field and $R$ a \ks-algebra. Note that $\s\colon k\to k$ is a morphism of difference rings. We denote by $${\hs R}$$ the \ks-algebra obtained from $R$ by extension of scalars via $\s\colon k\to k$, i.e., ${\hs R}=R\otimes_k k$ with $k$-algebra structure map $k\to {\hs R}$ given by $\lambda\mapsto 1\otimes\lambda$. 
We note that the map $$\psi\colon {\hs R}=R\otimes_k k\to R,\ f\otimes\lambda \mapsto \s(f)\lambda$$ is a morphism of $k$-algebras.

If $V$ is a vector space over $k$ with $k$-basis $(v_i)_{i\in I}$, then
$V\otimes_k K$ is a vector space over $K$ with $K$-basis $(v_i\otimes 1)_{i\in I}$. It follows that if $(f_i)_{i\in I}$ is a $k$-basis of $R$, then $(f_i\otimes 1)_{i\in I}$ is a $k$-basis of ${\hs R}$. Therefore, if $R$ is finite dimensional as a vector space over $k$, the map $\psi$ is injective if and only if it is surjective. This fact will be used repeatedly in the proofs to follow.

For a $\s$-reduced difference ring $R$, we denote its \emph{inversive closure} (see \cite[Def. 2.1.6]{Levin:difference}) by $R^*$. This is an inversive difference ring containing $R$ such that for every $f\in R^*$, there exists an $n\geq 0$ with $\s^n(f)\in R$.

If $f$ is a univariate polynomial with coefficients in some field, and $\tau$ is an endomorphism on that field, we denote by
$${^{\tau}\!f}$$ the polynomial obtained from $f$ by applying $\tau$ to the coefficients.

\subsection{$\s$-separable $\s$-algebras} \label{subsec: sseparable salgebras}

\emph{From now on we assume that $k$ is a difference field.}

\medskip

The notion of a $\s$-separable $\s$-algebra does not appear in the standard textbooks \cite{Cohn:difference} and \cite{Levin:difference}, but it has proved useful in recent literature, including \cite[Section 5.1]{Hrushovski:elementarytheoryoffrobenius}, \cite[Section 1.5]{Wibmer:thesis}, \cite[Section 4]{DiVizioHardouinWibmer:DifferenceGaloisTheory} and \cite{ive-tgs}, \cite{ive-tgsacfa}.
In this subsection we collect basic properties of $\s$-separable $\s$-algebras that will be needed later.

%  Let $k$ be a $\s$-field. The notion of a $\s$-separable \ks-algebra generalizes the notion of a separable algebra. Indeed, specializing

\begin{defi} \label{defi: sseparable}
	A \ks-algebra $R$ is called \emph{$\s$-separable} \index{$\s$-separable} (over $k$) if $R\otimes _k K$ is $\s$-reduced for every $\s$-field extension $K|k$.
\end{defi}

Note that Definition \ref{defi: sseparable} generalizes the notion of a separable algebra. (See Corollary~\ref{cor: ssep iff sep} for a precise mathematical statement.)

%  If $k$ is a field of characteristic $p>0$ and $\s$ is the Frobenius endomorphism, i.e., $\s(f)=f^p$ for $f\in R$, then $R$ is $\s$-separable over $k$ if and only if $R$ is a separable $k$-algebra. (This is clear from Proposition \ref{prop: sseparable} below.) 

\begin{prop} \label{prop: sseparable}
	The following conditions on a \ks-algebra $R$ are equivalent: 
	\begin{enumerate}
		\item $R$ is $\s$-separable over $k$.
		\item $R\otimes_k K$ is $\s$-reduced, where $K=k^*$ denotes the inversive closure of $k$.
		\item $R\otimes_ k K$ is $\s$-reduced for some inversive $\s$-field extension $K$ of $k$.		
		\item If $f_1,\ldots,f_n\in R$ are $k$-linearly independent, then also $\s(f_1),\ldots,\s(f_n)\in R$ are $k$-linearly independent. 
		\item The canonical map ${\hs R}=R\otimes_k k\to R,\ f\otimes\lambda\mapsto \s(f)\lambda$ is injective.
		\item $R$ is $\s$-reduced and linearly disjoint from $k^*$ over $k$ (inside $R^*$).
		\item $R\otimes_k S$ is $\s$-reduced for every $\s$-reduced \ks-algebra $S$. 
	\end{enumerate}
\end{prop}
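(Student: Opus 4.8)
The plan is to make the two linear-algebra conditions (iv) and (v) the hub: I would first prove (iv)$\Leftrightarrow$(v), then close a cycle (i)$\Rightarrow$(ii)$\Rightarrow$(iii)$\Rightarrow$(iv)$\Rightarrow$(i), and finally attach (vi) and (vii) to this core. For (iv)$\Leftrightarrow$(v), fix a $k$-basis $\{e_i\}$ of $R$. Since $\s\colon k\to k$ is a field extension, extension of scalars shows that $\{e_i\otimes 1\}$ is a $k$-basis of $\hs R=R\otimes_k k$, and the canonical map $c\colon \hs R\to R$ sends $e_i\otimes 1\mapsto \s(e_i)$. Hence $c$ is injective precisely when $\{\s(e_i)\}$ is $k$-linearly independent, which is (iv). I would also record here that (v) forces $R$ to be $\s$-reduced: $\s_R$ factors as $R\to\hs R\xrightarrow{c}R$, $f\mapsto f\otimes 1$, and the first map is injective by faithful flatness of $\s\colon k\to k$.

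For the cycle, (i)$\Rightarrow$(ii)$\Rightarrow$(iii) are immediate because $k^*$ is an inversive $\s$-field extension of $k$. For (iii)$\Rightarrow$(iv) I would argue by contraposition: a relation $\sum a_i\s(f_i)=0$ with $a_i\in k$ not all zero and $f_i$ that are $k$-linearly independent can be lifted, using inversiveness of $K$ to write $a_i=\s(b_i)$ with $b_i\in K$, to the element $y=\sum f_i\otimes b_i\in R\otimes_k K$ satisfying $\s(y)=0$; then $\s$-reducedness gives $y=0$, contradicting independence of the $f_i$. For (iv)$\Rightarrow$(i), write a putative $\s$-torsion element of $R\otimes_k K$ as $x=\sum f_i\otimes\lambda_i$ with the $f_i$ taken from a basis; from $\s(x)=\sum\s(f_i)\otimes\s(\lambda_i)=0$ and the independence of $\{\s(f_i)\}$ supplied by (iv) one gets $\s(\lambda_i)=0$, hence $\lambda_i=0$ and $x=0$.

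It remains to attach (vi) and (vii). For (iv)$\Rightarrow$(vi), note (v) already gives $R$ $\s$-reduced, so $R,k^*\hookrightarrow R^*$; I would use the criterion that linear disjointness means a $k$-basis $\{e_i\}$ of $R$ stays $k^*$-linearly independent in $R^*$, and kill a relation $\sum a_i e_i=0$ ($a_i\in k^*$) by applying a common power $\s^n$ with $\s^n(a_i)\in k$, reducing to $\sum_i\s^n(a_i)\,\s^n(e_i)=0$ in $R$ and invoking (iv) iterated $n$ times to force $\s^n(a_i)=0$, hence $a_i=0$. Conversely (vi)$\Rightarrow$(ii) because the multiplication map $R\otimes_k k^*\to R^*$ is a morphism of $\s$-rings whose injectivity (linear disjointness) identifies $R\otimes_k k^*$ with a $\s$-subring of the inversive, hence $\s$-reduced, ring $R^*$.

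Finally (vii)$\Rightarrow$(i) is trivial, since a $\s$-field is $\s$-reduced, and the substantive direction is (v)$\Rightarrow$(vii): since $k$ is a field, tensoring the injection $c\colon\hs R\to R$ of (v) with $S$ over $k$ keeps it injective, and a basis computation turns $\s(\sum e_i\otimes s_i)=0$ into $\sum_i c_{ij}\s(s_i)=0$ for all $j$, where $\s(e_i)=\sum_j c_{ij}e_j$, whence $\s(s_i)=0$ by injectivity of $c\otimes\mathrm{id}_S$ and then $s_i=0$ because $S$ is $\s$-reduced. I expect this last step to be the main obstacle: the point is that $k$-linear independence of $\{\s(e_i)\}$ upgrades to injectivity after an \emph{arbitrary} base change only thanks to flatness over the field $k$, and the $\s$-reducedness of $S$ is exactly what is needed to then cancel $\s$. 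The remaining implications are bookkeeping with bases and $\s$-reducedness once this core is in place.
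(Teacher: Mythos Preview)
Your proof is correct and follows essentially the same approach as the paper: (iv)/(v) serve as the linear-algebra hub, and the arguments for (iii)$\Rightarrow$(iv), (iv)$\Rightarrow$(vi), and (iv)/(v)$\Rightarrow$(vii) are the same in substance. The only noteworthy difference is that you close (vi) back into the cycle via (vi)$\Rightarrow$(ii) (using the embedding $R\otimes_k k^*\hookrightarrow R^*$ into a $\s$-reduced ring), whereas the paper proves (vi)$\Rightarrow$(iv) directly by pulling a relation $\sum\lambda_i\s(f_i)=0$ back to $\sum\mu_if_i=0$ in $R^*$ with $\s(\mu_i)=\lambda_i$; both are short and equivalent in spirit.
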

\begin{proof} Cf. Proposition 1.5.2 in \cite{Wibmer:thesis} and Theorem 2, Chapter V, \S 15, No. 4 in \cite{Bourbaki:Algebra2}. The implications (i)$\Rightarrow$(ii) and (ii)$\Rightarrow$(iii) are trivial. Let us show that (iii) implies (iv). Assume $\lambda_1\s(f_1)+\cdots+\lambda_n\s(f_n)=0$ with $\lambda_1,\ldots,\lambda_n\in k$. Because $K$ is inversive, there exist $\mu_1,\ldots,\mu_n\in K$ with $\s(\mu_i)=\lambda_i$ for $i=1,\ldots,n$. Then
	$$\s(\sum f_i\otimes\mu_i)=\sum\s(f_i)\otimes\lambda_i=0\in R\otimes_k K.$$
	Because $R\otimes_k K$ is $\s$-reduced it follows that $\sum f_i\otimes\mu_i=0$. If $(h_i)_{i\in I}$ is a $k$-basis of $R$, then $(h_i\otimes 1)_{i\in I}$ is a $K$-basis of $R\otimes_k K$. Since the $f_i$'s are $k$-linearly independent, they can be extended to a $k$-basis of $R$ and it follows that $f_1\otimes 1,\ldots,f_n\otimes 1$ are $K$-linearly independent. Thus $\mu_i=0$ for $i=1,\ldots,n$ and therefore $\lambda_i=\s(\mu_i)=0$ for $i=1,\ldots,n$.
	
	Clearly (iv) and (v) are equivalent. 
	Next we will show that (iv) implies (vi). If $f\in R$ is non-zero, $\s(f)$ is $k$-linearly independent and thus non-zero. So $R$ is $\s$-reduced. Let $f_1,\ldots,f_n\in R\subset R^*$ be $k$-linearly independent and assume that $\lambda_1,\ldots,\lambda_n\in k^*$ are such that $\lambda_1f_1+\cdots+\lambda_nf_n=0$. Then there is some $m\geq 1$ such that $\s^m(\lambda_i)\in k$ for $i=1,\ldots,n$. So $\s^m(\lambda_1)\s^m(f_1)+\cdots+\s^m(\lambda_n)\s^m(f_n)=0$ is a linear dependence relation for $\s^m(f_1),\ldots,\s^m(f_n)$ over $k$. By (iv) it must be trivial. So $\s^m(\lambda_i)=0$ and therefore $\lambda_i=0$. 
	
	To see that (vi) implies (iv), assume that $f_1,\ldots,f_n\in R$ are $k$-linearly independent and let $\lambda_1,\ldots,\lambda_n\in k$ with $\lambda_1\s(f_1)+\cdots+\lambda_n\s(f_n)=0$. Let $\mu_i\in k^*$ with $\s(\mu_i)=\lambda_i$. Then
	$\s(\mu_1f_1+\cdots+\mu_nf_n)=0$. But $\mu_1f_1+\cdots+\mu_nf_n\in R^*$ and thus $\mu_1f_1+\cdots+\mu_nf_n=0$. So the $\mu_i$'s and therefore also the $\lambda_i$'s are zero.

	 Next we will show that (iv) implies (vii).
	  Assume that $f\in R\otimes_k S$ with $\s(f)=0$. Let $(f_i)$ be a $k$-basis of $R$. Then
	$f=\sum f_i\otimes s_i$ with $s_i\in S$ and $\sum \s(f_i)\otimes\s(s_i)=\s(f)=0$. Because the $\s(f_i)$'s are $k$-linearly independent we have $\s(s_i)=0$ for all $i$. But as $S$ is $\s$-reduced this implies that $s_i=0$ and therefore $f=0$.
	
	Finally, the implication (vi)$\Rightarrow$(i) is trivial.
\end{proof}

\begin{cor} \label{cor: ssep iff sep}
	Let $k$ be field of positive characteristic $p$ and let $q$ be a power of $p$. Let $R$ be a $k$-algebra and consider $k$ and $R$ as difference rings via the Frobenius endomorphism, i.e., $\s(f)=f^q$ for $f\in R$. Then $R$ is $\s$-separable over $k$ if and only if $R$ is a separable $k$-algebra.
\end{cor}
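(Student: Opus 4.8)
The plan is to feed the Frobenius difference structure into the characterizations of $\s$-separability provided by Proposition~\ref{prop: sseparable} and then compare with the classical criterion for separability of a commutative $k$-algebra. Two observations carry the whole argument: (a) for a Frobenius endomorphism $\s(x)=x^q$, being $\s$-reduced is the same as being reduced; and (b) the inversive closure $k^*$ of $(k,\s)$ has, as its underlying field, the perfect closure $k^{1/p^\infty}$ of $k$.

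I would first record (a). On each ring in sight the endomorphism $\s$ is literally the $q$-power map: since $q$ is a power of $p$ and the characteristic is $p$, we have $(\sum x_i)^q=\sum x_i^q$, so on $R\otimes_k K$ the rule $\s(r\otimes\lambda)=r^q\otimes\lambda^q$ coincides with $x\mapsto x^q$. Hence $\s$ is injective if and only if the ring is reduced: if $x^N=0$, choose $j$ with $q^j\geq N$, so that $\s^j(x)=x^{q^j}=0$, and injectivity of $\s$ (thus of $\s^j$) forces $x=0$; the converse is immediate. So ``$\s$-reduced'' and ``reduced'' agree throughout.

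For (b), since $\s$ is already injective the inversive closure is the direct limit of $k\xrightarrow{\s}k\xrightarrow{\s}\cdots$, which is realized inside a fixed algebraic closure as $\bigcup_n k^{1/q^n}$ with $\s$ acting as the (now bijective) map $x\mapsto x^q$. As $q=p^m$ for some $m$, this union is exactly $k^{1/p^\infty}$.

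Finally I would match the two notions. By Proposition~\ref{prop: sseparable}, condition~(iii) applied with $K=k^*$, the algebra $R$ is $\s$-separable if and only if $R\otimes_k k^*$ is $\s$-reduced; by (a) and (b) this means precisely that $R\otimes_k k^{1/p^\infty}$ is reduced. On the other hand, the theorem of MacLane--Bourbaki (\cite[Chapter~V, \S15, No.~4, Theorem~2]{Bourbaki:Algebra2}) characterizes separability of a commutative $k$-algebra by exactly this condition---equivalently by $R$ being reduced and linearly disjoint from $k^{1/p^\infty}$ over $k$, which is the shape taken by condition~(vi) of the Proposition. Comparing the two yields the equivalence. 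The only thing demanding care, rather than a genuine obstacle, is to ensure that the difference-theoretic closure $k^*$ which appears is identified with the closure against which classical separability is tested; once (b) pins $k^*=k^{1/p^\infty}$, the corollary is a direct comparison of definitions.
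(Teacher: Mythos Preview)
Your proof is correct. Both your argument and the paper's rest on the same two ingredients---Proposition~\ref{prop: sseparable} and Bourbaki's criterion for separability---but they pair up different equivalent conditions from each. The paper handles the two implications separately: for the forward direction it equips an \emph{arbitrary} field extension $K$ of $k$ with the Frobenius and observes directly that $\s$-reduced implies reduced (so no appeal to Bourbaki is needed there); for the converse it matches condition~(iv) of the Proposition with Bourbaki's criterion that $p$-th powers of $k$-linearly independent elements remain $k$-linearly independent. Your route is more symmetric: having identified the inversive closure $k^*$ with the perfect closure $k^{1/p^\infty}$, you read off both directions at once by matching condition~(ii) with Bourbaki's criterion that separability is equivalent to $R\otimes_k k^{1/p^\infty}$ being reduced. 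The small extra cost is verifying $k^*=k^{1/p^\infty}$; the payoff is a single biconditional rather than two separate arguments. (A minor remark: the condition you invoke is literally condition~(ii) rather than~(iii), since you are working with the specific choice $K=k^*$.)
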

\begin{proof}
	Assume that $R$ is $\s$-separable over $k$ and let $K$ be a field extension of $k$. We have to show that $R\otimes_k K$ is reduced. But if we consider $K$ as a $\s$-field via $\s(f)=f^q$, then, by assumption, $\s$ is injective on $R\otimes_k K$. Thus $R\otimes_k K$ is reduced.
	
	Conversely, assume that $R$ is a separable $k$-algebra. By \cite[Theorem 2, Chapter V, \S 15, No. 4, A.V.123]{Bourbaki:Algebra2} the elements $f_1^p,\ldots,f_n^p$ are $k$-linearly independent if $f_1,\ldots,f_n\in R$ are $k$-linearly independent. Therefore $\s(f_1),\ldots,\s(f_n)$ are $k$-linearly independent and it follows from Proposition \ref{prop: sseparable} that $R$ is $\s$-separable over $k$.
\end{proof}

\begin{cor} \label{cor: tensor product of sseparable salgebras is sseparable}
	If $R$ and $S$ are $\s$-separable \ks-algebras, then also $R\otimes_k S$ is a \mbox{$\s$-s}eparable \ks-algebra.
\end{cor}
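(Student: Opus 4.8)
The plan is to leverage characterization (vii) of Proposition \ref{prop: sseparable}, which I expect to be the decisive tool precisely because it is the ``transitive'' characterization of $\s$-separability: $R$ is $\s$-separable over $k$ if and only if $R\otimes_k T$ is $\s$-reduced for \emph{every} $\s$-reduced \ks-algebra $T$. The point is that among the seven equivalent conditions, this is the one that interacts cleanly with iterated tensor products, whereas conditions (i)--(iii) only test against $\s$-fields and conditions (iv)--(vi) are phrased in terms of internal linear (in)dependence, which would be clumsy to track through a tensor product.

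First I would reduce the claim to verifying condition (vii) for $R\otimes_k S$: it suffices to show that $(R\otimes_k S)\otimes_k T$ is $\s$-reduced for an arbitrary $\s$-reduced \ks-algebra $T$. Next I would invoke the associativity of the tensor product to rewrite
$$(R\otimes_k S)\otimes_k T\cong R\otimes_k(S\otimes_k T),$$
noting that this is an isomorphism of $\s$-rings since $\s$ acts diagonally on each tensor factor.

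Then I would apply condition (vii) twice, in cascade. Since $S$ is $\s$-separable over $k$ and $T$ is $\s$-reduced, condition (vii) applied to $S$ shows that $S\otimes_k T$ is a $\s$-reduced \ks-algebra. Now, since $R$ is $\s$-separable over $k$ and $S\otimes_k T$ is $\s$-reduced, condition (vii) applied to $R$ shows that $R\otimes_k(S\otimes_k T)$ is $\s$-reduced. Transporting this back along the isomorphism above, $(R\otimes_k S)\otimes_k T$ is $\s$-reduced. As $T$ was an arbitrary $\s$-reduced \ks-algebra, condition (vii)---now read in the direction (vii)$\Rightarrow$(i)---yields that $R\otimes_k S$ is $\s$-separable over $k$, which is the desired conclusion.

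I do not anticipate a genuine obstacle here; the statement is essentially a formal consequence of the equivalence established in Proposition \ref{prop: sseparable}. The only point requiring a modicum of care is confirming that the canonical associativity isomorphism of tensor products is compatible with the diagonal $\s$-actions, so that ``$\s$-reduced'' transfers across it---but this is immediate from the definition $\s(r\otimes s)=\s(r)\otimes\s(s)$ of the $\s$-structure on tensor products recorded at the start of Section~\ref{section:one}.
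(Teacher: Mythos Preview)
Your proof is correct and follows essentially the same route as the paper: rewrite $(R\otimes_k S)\otimes_k T$ via associativity and apply Proposition~\ref{prop: sseparable} in cascade, first to $S$ and then to $R$. The only cosmetic difference is that the paper verifies condition~(i) directly (testing against an arbitrary $\s$-field extension $K$ of $k$, so that $S\otimes_k K$ is $\s$-reduced by the definition of $\s$-separability, and then $R\otimes_k(S\otimes_k K)$ is $\s$-reduced by~(vii) for $R$), whereas you verify condition~(vii) (testing against an arbitrary $\s$-reduced \ks-algebra $T$ and invoking~(vii) twice); the underlying idea is identical.
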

\begin{proof}
	Let $K$ be a $\s$-field extension of $k$. Then $S\otimes_k K$ is $\s$-reduced because $S$ is \mbox{$\s$-s}eparable. Therefore $(R\otimes_k S)\otimes_k K=R\otimes_k (S\otimes_k K)$ is $\s$-reduced by Proposition~\ref{prop: sseparable}.
\end{proof}

\begin{cor} \label{cor: sseparable invariant under base extension}
	Let $R$ be a \ks-algebra and $K$ a $\s$-field extension of $k$. Then $R$ is $\s$-separable over $k$ if and only if $R\otimes_k K$ is $\s$-separable over $K$.
\end{cor}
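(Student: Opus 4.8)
The plan is to derive both implications from Proposition~\ref{prop: sseparable}, using the definition directly for the easy direction and the linear independence criterion (iv) for the harder one. Throughout I would exploit the two elementary facts that $K$ is free (hence faithfully flat) as a $k$-module, and that the $\s$-structure is compatible with the base change map $f\mapsto f\otimes 1$, i.e. $\s(f\otimes 1)=\s(f)\otimes 1$.

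For the forward implication, suppose $R$ is $\s$-separable over $k$. To check that $R\otimes_k K$ is $\s$-separable over $K$, I would take an arbitrary $\s$-field extension $L|K$. Since $K|k$ is itself a $\s$-field extension, so is $L|k$, and the canonical isomorphism of $\s$-rings $(R\otimes_k K)\otimes_K L\cong R\otimes_k L$ identifies the algebra to be tested with $R\otimes_k L$, which is $\s$-reduced by the $\s$-separability of $R$. Thus $R\otimes_k K$ is $\s$-separable over $K$ straight from Definition~\ref{defi: sseparable}.

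For the reverse implication, suppose $R\otimes_k K$ is $\s$-separable over $K$, and verify criterion (iv) of Proposition~\ref{prop: sseparable} for $R$ over $k$. Let $f_1,\dots,f_n\in R$ be $k$-linearly independent. Extending them to a $k$-basis of $R$ shows that the images $f_1\otimes 1,\dots,f_n\otimes 1\in R\otimes_k K$ are $K$-linearly independent. Applying criterion (iv) to the $\s$-separable $K$-$\s$-algebra $R\otimes_k K$, the elements $\s(f_i\otimes 1)=\s(f_i)\otimes 1$ are again $K$-linearly independent, and hence $\s(f_1),\dots,\s(f_n)$ are $k$-linearly independent in $R$ (a $k$-linear relation among them would persist after $\otimes 1$). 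By (iv) once more, now over $k$, this yields that $R$ is $\s$-separable over $k$.

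I expect the reverse direction to be the main obstacle, for a structural reason: the defining condition of $\s$-separability quantifies over \emph{all} $\s$-field extensions of the base, so it does not visibly descend along $K|k$, and one cannot simply recover extensions of $k$ from extensions of $K$. The resolution is to abandon the definition in favour of the intrinsic linear-algebra criterion (iv), which is insensitive to the base and transports cleanly across the faithfully flat map $R\to R\otimes_k K$ precisely because $\s$ commutes with $f\mapsto f\otimes 1$. The remaining verifications (preservation of linear independence under field base change, and the tensor formula for $\s$) are routine.
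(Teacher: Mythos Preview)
Your proof is correct. The forward direction coincides with the paper's argument verbatim. For the converse, the paper takes a shorter route via criterion (iii) of Proposition~\ref{prop: sseparable}: since $R\otimes_k K$ is $\s$-separable over $K$, the ring $(R\otimes_k K)\otimes_K K^*=R\otimes_k K^*$ is $\s$-reduced, and as $K^*$ is an inversive $\s$-field extension of $k$, criterion (iii) immediately gives that $R$ is $\s$-separable over $k$. Your argument via criterion (iv) is equally valid and has the virtue of making the descent mechanism (faithful flatness of $K$ over $k$, together with $\s(f\otimes 1)=\s(f)\otimes 1$) fully explicit; the paper's choice simply trades that transparency for brevity by exploiting the existence of an inversive overfield.
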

\begin{proof}
	Let us first assume that $R$ is $\s$-separable over $k$. Let $L$ be a $\s$-field extension of $K$. Then $(R\otimes_k K)_K L=R\otimes_k L$ is $\s$-reduced. Thus $R\otimes_k K$ is $\s$-separable over $K$. 
	
	Conversely, if $R\otimes_k K$ is $\s$-separable over $K$, it follows that $(R\otimes_k K)\otimes_K K^*=R\otimes_k K^*$ is $\s$-reduced. Thus $R$ is $\s$-separable over $k$ by Proposition \ref{prop: sseparable}.  
\end{proof}

\begin{cor} \label{cor: sfield extension of inversive is sseparable}
	Let $K|k$ be an extension of $\s$-fields. If $k$ is inversive, then $K$ is \mbox{$\s$-s}eparable over $k$.
\end{cor}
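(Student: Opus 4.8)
The plan is to reduce the claim to one of the equivalent characterizations of $\s$-separability furnished by Proposition~\ref{prop: sseparable}, exploiting that inversivity of $k$ makes extension of scalars along $\s\colon k\to k$ transparent. The quickest route goes through criterion (ii): a \ks-algebra $R$ is $\s$-separable over $k$ precisely when $R\otimes_k k^*$ is $\s$-reduced, where $k^*$ denotes the inversive closure of $k$.

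Since $k$ is assumed inversive, its inversive closure is $k$ itself, i.e.\ $k^*=k$. Thus criterion (ii) specializes to the requirement that $K\otimes_k k\cong K$ be $\s$-reduced. But $K$ is a field, so $\s\colon K\to K$ is a ring endomorphism whose kernel is a proper ideal of $K$ and hence zero; that is, $\s$ is injective on $K$, so $K$ is $\s$-reduced. This already yields the conclusion.

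Alternatively, I would verify criterion (iv) directly, which is the version most closely mirroring the implication (iii)$\Rightarrow$(iv) in the proof of Proposition~\ref{prop: sseparable}. Take $f_1,\dots,f_n\in K$ that are $k$-linearly independent and suppose $\sum_i\lambda_i\s(f_i)=0$ with $\lambda_i\in k$. Because $k$ is inversive, each $\lambda_i$ has a $\s$-preimage $\mu_i\in k$ with $\s(\mu_i)=\lambda_i$, so $\s\big(\sum_i\mu_i f_i\big)=\sum_i\lambda_i\s(f_i)=0$. Injectivity of $\s$ on the field $K$ forces $\sum_i\mu_i f_i=0$, and linear independence of the $f_i$ then gives $\mu_i=0$, whence $\lambda_i=\s(\mu_i)=0$. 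So $\s(f_1),\dots,\s(f_n)$ are $k$-linearly independent, and $K$ is $\s$-separable by Proposition~\ref{prop: sseparable}.

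There is essentially no serious obstacle here: both arguments rest on the two elementary facts that $k^*=k$ when $k$ is inversive and that the endomorphism $\s$ of a field is automatically injective. The only point requiring minimal care is the identification $K\otimes_k k\cong K$ (respectively the scalar-lifting step in the (iv) argument), which is exactly where inversivity is used; without it one could not produce the preimages $\mu_i$, and the statement would genuinely fail.
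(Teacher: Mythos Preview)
Your proposal is correct and follows essentially the same approach as the paper: the paper's proof is the one-line observation that, since $k$ is inversive (so $k^*=k$), criterion (ii) of Proposition~\ref{prop: sseparable} reduces to the fact that the $\s$-field $K$ is $\s$-reduced. Your alternative verification of criterion (iv) is also correct but is simply a more explicit unwinding of the same idea.
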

\begin{proof}
	By Proposition \ref{prop: sseparable} it suffices to note that a $\s$-field is $\s$-reduced.
\end{proof}

\subsection{Strongly $\sigma$-\'{e}tale $k$-$\s$-algebras}

%In the previous subsection we have seen that constant idempotent elements are closely related to $\s$-connected components. So if $R$ is a \ks-algebra, the \ks-subalgebra of $R$ $\s$-generated by all constant idempotent elements contains some information about the $\s$-connected components of $\spec(R)$. However, this \ks-subalgebra is in some sense ``too small'': It does not take into account the periodic idempotent elements, i.e., the constant idempotent elements with respect to $\s^d$ ($d\in\mathbb{N},\ d\geq 1$). Moreover, some constant idempotent elements may only become visible after an extension of the base $\s$-field $k$. 
%Indeed, if $G$ is a $\s$-algebraic group and $K$ a $\s$-field extension of $k$, then $G_K$ may have more $\s$-connected components than $G$. (See Example \ref{ex: sconnected3}).

In this subsection we introduce \ssetale{} \ks-algebras and establish their basic properties.

%The main goal of this subsection is to define a \ks-subalgebra $\pis(R)$ of $R$ that contains all the periodic idempotent elements and is well-behaved with respect to extensions of the base $\s$-field. In fact, we will define $\pis(R)$ as the union of all \ssetale{} (Definition~\ref{defi: strongly setale}) \ks-subalgebras of $R$. In the next subsection we will then use the properties of $\pis(R)$ proved in this subsection to define the $\s$-identity component $G^\sc$ of a $\s$-algebraic group $G$. Indeed, eventually we will arrive at $\pis(k\{G\})=k\{G/G^\sc\}$.
%
%\medskip

Recall (\cite[Chapter V, \S 6]{Bourbaki:Algebra2}) that an algebra $R$ over a field $k$ is called \'{e}tale if $R\otimes_k\overline{k}$ is isomorphic to a finite direct product of copies of the algebraic closure $\overline{k}$ of $k$. (In particular, $R$ is finite dimensional as a $k$-vector space.) See \cite[Theorem 4, Chapter V, \S 6]{Bourbaki:Algebra2} or \cite[Theorem 6.2]{Waterhouse:IntroductiontoAffineGroupSchemes} for %equivalent 
a number of characterizations of \'{e}tale algebras. One of these characterizations is that $R$ is a finite direct product of finite separable field extensions of $k$, say $R=K_1\times\ldots\times K_n$. Then $e_1=(1,0,\ldots,0),\ e_2=(0,1,0,\ldots,0),\ldots, e_n=(0,\ldots,0,1)$ is the set of primitive idempotent elements of $R$. Moreover, the set $\{e_1,\ldots,e_n\}$ of primitive idempotent elements of $R$ is characterized by the following properties:
 %\begin{equation} \label{eq:characterize primitive idempotents}
 \begin{enumerate}
 \item $e_1,\ldots,e_n$ are idempotent,
 \item $e_ie_j=0$ for $i\neq j$,
 \item $e_1+\ldots+e_n=1$,
 \item $n$ is maximal.
 \end{enumerate}

 %\end{equation}

The following definition introduces a difference analog of \'{e}tale algebras.

\begin{defi} \label{defi: strongly setale}
	A $k$-$\s$-algebra is called \emph{\ssetale{}}\index{strongly $\s$-\'{e}tale \ks-algebra} if it is $\s$-separable over $k$ and \'{e}tale as a $k$-algebra.
\end{defi}

%Clearly a \ssetale{} \ks-algebra is $\s$-\'{e}tale.
Note that by Proposition \ref{prop: sseparable}, over an inversive $\s$-field $k$, a \ks-algebra whose underlying $k$-algebra is \'{e}tale is \ssetale{} if and only if it is inversive. In particular, over an inversive $\s$-field, any 
$\s$-field extension that is finite and separable as a field extension is \ssetale{}. Let us give some more examples of \ssetale{} \ks-algebras. 

\begin{ex} \label{ex: ssetale}
	If $R=k\times\ldots\times k$ is an $n$-fold direct product of copies of  the $\s$-field $k$ and $\tau\in S_n$, we can extend $\s$ to $R$ by $\s((a_1,\ldots,a_n))=(\s(a_{\tau(1)}),\ldots,\s(a_{\tau_{(n)}}))$. Then $R$ is \ssetale{} because $R\otimes_k k^*$ is inversive (Cf. Proposition \ref{prop: sseparable}). On the other hand, if we define $\s$ on $R=k\times k$ by $\s((a_1,a_2))=(\s(a_1),\s(a_1))$, then $R$ is \'{e}tale as a $k$-algebra but not \ssetale{} because $\s$ is not injective on $R$.
\end{ex}

\begin{ex}\label{ex:special-mahler}	
	If we consider $k=\C(x)$, the field of rational functions over $\C$, as a $\s$-field via $\s(f(x))=f(x^2)$ and $R=k(x^{1/2})$ as a \ks-algebra via $\s(x^{1/2})=x$, then $R$ is \'{e}tale as a $k$-algebra but not \ssetale{} because $R$ is not $\s$-separable. To see this, note that $1$ and $x^{1/2}$ are $k$-linearly independent but $\s(1)=1$ and $\s(x^{1/2})=x$ are $k$-linearly dependent.
	
	On the other hand, $R=k(x^{1/3})$ with $\s(x^{1/3})=x^{2/3}$ is \ssetale{}. To see that $R$ is $\s$-separable over $k$, note that $1, x^{1/3}, x^{2/3}$ is a $k$-basis of $R$ such that also $\s(1),\s(x^{1/3}), \s(x^{2/3})$ is a $k$-basis, because the latter agrees with $1, x^{2/3}, xx^{1/3}$.
These considerations are generalized in Example~\ref{mahler-example}.
\end{ex}

%Similar to the use of \'{e}tale algebras (and \'{e}tale algebraic groups) in the study of the connected components of algebraic groups, we will use \ssetale{} $k$-$\s$-algebras in the study of the $\s$-connected components of $\s$-algebraic groups. To achieve this we need several lemmas on \ssetale{} \ks-algebras.

\begin{lemma} \label{lemma: sfinite stable under base extension}
	Let $R$ be a $k$-$\s$-algebra and $K$ a $\s$-field extension of $k$. Then $R$ is a \ssetale{} $k$-$\s$-algebra if and only if $R\otimes_k K$ is a \ssetale{} $K$-$\s$-algebra.
\end{lemma}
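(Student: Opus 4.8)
The plan is to observe that the \ssetale{} condition is the conjunction of two independent requirements---$\s$-separability over $k$ and \'{e}taleness of the underlying $k$-algebra---and to check that each is stable under base extension, in both directions, separately.

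For the $\s$-separability component nothing new is needed: Corollary~\ref{cor: sseparable invariant under base extension} asserts precisely that $R$ is $\s$-separable over $k$ if and only if $R\otimes_k K$ is $\s$-separable over $K$.

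The remaining task is to show that the underlying $k$-algebra $R$ is \'{e}tale over $k$ if and only if $R\otimes_k K$ is \'{e}tale over $K$. I would use the characterisation that a $k$-algebra is \'{e}tale exactly when it is finite-dimensional over $k$ and $R\otimes_k\overline{k}$ is reduced (equivalently, isomorphic to a finite product of copies of $\overline{k}$, which is the definition recalled above). Finite-dimensionality transfers both ways via the identity $\dim_K(R\otimes_k K)=\dim_k R$. For the reducedness, in the forward direction I would use the transitivity identification $(R\otimes_k K)\otimes_K\overline{K}=R\otimes_k\overline{K}$ together with the fact that $R\otimes_k\overline{k}\cong\overline{k}^n$ yields, after tensoring up along $\overline{k}\hookrightarrow\overline{K}$, the isomorphism $R\otimes_k\overline{K}\cong\overline{K}^n$; in the backward direction I would fix an embedding $\overline{k}\hookrightarrow\overline{K}$ and use that $R\otimes_k\overline{k}$ injects into $R\otimes_k\overline{K}$ (by faithful flatness of $\overline{K}$ over $\overline{k}$), so that reducedness of the latter forces reducedness of the former, whence $R\otimes_k\overline{k}$ is a reduced finite-dimensional algebra over the algebraically closed field $\overline{k}$ and hence a finite product of copies of $\overline{k}$.

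Combining the two components, $R$ is \ssetale{} over $k$ precisely when both conditions hold for $R$ over $k$, which happens precisely when both hold for $R\otimes_k K$ over $K$, i.e.\ when $R\otimes_k K$ is \ssetale{} over $K$. The only step demanding genuine care is the descent of \'{e}taleness in the backward direction, where one must accommodate the change of algebraic closures; this is the standard geometric-reducedness argument and poses no real obstacle once the embedding $\overline{k}\hookrightarrow\overline{K}$ and the faithful flatness of $\overline{K}$ over $\overline{k}$ are in place.
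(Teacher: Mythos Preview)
Your proof is correct and follows exactly the same strategy as the paper: split the \ssetale{} condition into its two constituents and handle each separately, invoking Corollary~\ref{cor: sseparable invariant under base extension} for the $\s$-separability part. The only difference is that where the paper simply cites Waterhouse or Bourbaki for the stability of \'{e}taleness under base change, you spell out the argument directly via geometric reducedness and the embedding $\overline{k}\hookrightarrow\overline{K}$; this is fine and amounts to reproving the cited fact.
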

\begin{proof}
	By \cite[Cor. 6.2, p. 47]{Waterhouse:IntroductiontoAffineGroupSchemes} or \cite[Cor. 2, Chapter V, \S 6, No. 5, A.V.32]{Bourbaki:Algebra2} the $k$-algebra $R$ is \'{e}tale if and only if the $K$-algebra $R\otimes_k K$ is \'{e}tale. Similarly, by Corollary \ref{cor: sseparable invariant under base extension} the $k$-$\s$-algebra $R$ is $\s$-separable if and only if the $K$-$\s$-algebra $R\otimes_k K$ is $\s$-separable.
\end{proof}

Let $R$ be a $\s$-ring. Following \cite{Levin:difference} an element $f\in R$ is called \emph{periodic} \index{periodic element} if $\s^n(f)=f$ for some $n\geq 1$.

\begin{lemma} \label{lemma: structure of sfinite ksalgebras}
	Let $R$ be a \ssetale{} $k$-$\s$-algebra. Then $\s$ induces a bijection on the set of primitive idempotent elements of $R$. Moreover, every idempotent element of $R$ is periodic.
\end{lemma}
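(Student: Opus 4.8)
The plan is to use the structure of \'etale algebras: since $R$ is \'etale over $k$, it is finite-dimensional and decomposes as a finite direct product $R\cong L_1\times\cdots\times L_r$ of finite separable field extensions $L_i$ of $k$. Under this identification the primitive idempotents of $R$ are exactly the standard idempotents $e_1,\ldots,e_r$ of the factors; they are pairwise orthogonal, sum to $1$, and are $k$-linearly independent. Because each $L_i$ is a field, its only idempotents are $0$ and $1$, so every idempotent of $R$ has the form $\sum_{i\in T}e_i$ for a unique subset $T\subseteq\{1,\ldots,r\}$; in particular the set of idempotents is finite.

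First I would record that $\s$ maps idempotents to idempotents, since $\s(e)^2=\s(e^2)=\s(e)$, and that it respects orthogonality and completeness: $\s(e_i)\s(e_j)=\s(e_ie_j)=0$ for $i\neq j$, while $\sum_i\s(e_i)=\s(1)=1$. Writing $\s(e_j)=\sum_{i\in S_j}e_i$ for suitable subsets $S_j$, orthogonality forces the $S_j$ to be pairwise disjoint and completeness forces them to cover $\{1,\ldots,r\}$.

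The crucial step, and the only place where $\s$-separability is really used, is to rule out $\s(e_j)=0$, i.e.\ to show that no $S_j$ is empty. This is immediate from Proposition~\ref{prop: sseparable}(iv): as $e_1,\ldots,e_r$ are $k$-linearly independent, so are $\s(e_1),\ldots,\s(e_r)$, and in particular each is non-zero. Hence the $r$ pairwise disjoint, non-empty sets $S_1,\ldots,S_r$ partition an $r$-element set, which forces each $S_j$ to be a singleton. Thus $\s(e_j)=e_{\pi(j)}$ for a well-defined map $\pi\colon\{1,\ldots,r\}\to\{1,\ldots,r\}$, and disjointness of the images makes $\pi$ injective, hence a permutation. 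This says precisely that $\s$ induces a bijection on the set of primitive idempotents.

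For the periodicity statement I would simply note that for an arbitrary idempotent $e=\sum_{i\in T}e_i$ one has $\s^n(e)=\sum_{i\in T}e_{\pi^n(i)}$, so choosing $n$ to be the order of the permutation $\pi$ (for instance $n=r!$) yields $\pi^n=\id$ and therefore $\s^n(e)=e$; hence every idempotent is periodic. I expect the entire difficulty to be concentrated in the non-vanishing of the $\s(e_i)$, which is exactly what the $\s$-separability hypothesis delivers; the rest is bookkeeping within the finite Boolean algebra of idempotents.
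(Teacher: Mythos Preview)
Your proof is correct and follows essentially the same outline as the paper's: identify the primitive idempotents $e_1,\ldots,e_r$ of the \'etale algebra $R$, use $\s$-separability to show that $\s$ permutes them, and deduce periodicity of all idempotents from the finiteness of this permutation.

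The only real difference is in how the bijection is established. The paper invokes characterization (v) of Proposition~\ref{prop: sseparable}: the map $\psi\colon{\hs R}\to R$ is injective, hence (by a dimension count) an isomorphism of $k$-algebras, and an algebra isomorphism automatically induces a bijection between the sets of primitive idempotents. You instead use characterization (iv) to get $\s(e_j)\neq 0$, and then argue combinatorially that the supports $S_j$ form a partition of an $r$-element set into $r$ non-empty pieces. Your route is slightly more hands-on and avoids the base-change construction ${\hs R}$ altogether; the paper's route is a line shorter because the primitive-idempotent bijection comes for free from the algebra isomorphism. Both are equally valid and equally short.
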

\begin{proof}
	As before the beginning of Section \ref{subsec: sseparable salgebras}, let $\hs R=R\otimes_k k$ be the \ks-algebra obtained from $R$ by base extension via $\s\colon k\to k$.
	Because $R$ is $\s$-separable over $k$, it follows from Proposition \ref{prop: sseparable} that $$\psi\colon \hs R=R\otimes_k k\to R,\ f\otimes\lambda\mapsto \s(f)\lambda$$ is injective. As $R$ is a finite dimensional $k$-vector space, this implies that $\psi$ is an isomorphism of $k$-algebras. Let $e_1,\ldots,e_n$ denote the primitive idempotent elements of $R$. Because $\psi$ is an isomorphism, there are precisely $n$ primitive idempotent elements in $\hs R$. These must be $e_1\otimes 1,\ldots,e_n\otimes 1\in {\hs R}=R\otimes_k k$ because they satisfy the characteristic properties given before Definition \ref{defi: strongly setale}. So it follows that $\psi$ maps $\{e_1\otimes 1,\ldots, e_n\otimes 1\}$ bijectively onto $\{e_1,\ldots,e_n\}$.
 This shows that $e\mapsto \s(e)$ defines a bijection on $\{e_1,\ldots,e_n\}$. In particular, each $e_i$ is periodic. Since an arbitrary idempotent element of $R$ is a sum of certain $e_i$'s, it follows that any idempotent element of $R$ is periodic.
\end{proof}
%\begin{lemma} \label{lemma: structure of sfinite ksalgebras}
%	Let $R$ be a \ssetale{} $k$-$\s$-algebra. Then there exist periodic idempotent elements $e_1,\ldots,e_n\in R$ such that $e_ie_j=0$ for $i\neq j$, $e_1+\cdots+e_n=1$ and $e_iR$ is a separable algebraic field extension of $k$. In particular,
%	$$R\simeq e_1R\oplus\cdots\oplus e_nR.$$
%\end{lemma}
%\begin{proof}
%	Since $R$ is an \'{e}tale $k$-algebra, it follows from  Bourbaki algebra 2, A.V.34 that $R$ is isomorphic (as a $k$-algebra) to a finite direct product of separable algebraic field extensions of $k$, say $R\simeq L_1\times\cdots\times L_n$. For $i=1,\ldots,n$ let $e_i\in R$ denote the primitive idempotent element corresponding to the identity element of $L_i$. Then $e_ie_j=0$ for $i\neq j$, $e_1+\cdots+e_n=1$ and $e_iR\simeq L_i$ is a separable algebraic field extension of $k$.
%	
%	It remains to show that the $e_i$'s are periodic.
%	Because $R$ is $\s$-separable over $k$ it follows from  that $\psi\colon \hs R=R\otimes_k k\to R,\ r\otimes\lambda\mapsto \s(r)\lambda$ notation is injective. As $R$ is a finite $k$-vector space, this implies that $\psi$ is an isomorphism of $k$-algebras . Since there are precisely $n$ primitive idempotent elements in $R$, namely $e_1,\ldots,e_n$, there are precisely $n$ primitive idempotent elements in $\hs R$. These must be $e_1\otimes 1,\ldots,e_n\otimes 1\in {\hs R}=R\otimes_k k$ and they are mapped bijectively onto the $e_i$'s under $\psi$. This shows that $e\mapsto \s(e)$ defines a bijection on $\{e_1,\ldots,e_n\}$. Therefore each $e_i$ is periodic.
%\end{proof}

\begin{lemma} \label{lemma: sgenerated by periodic idempotent implies ssetale}
	A \ks-algebra $\s$-generated by finitely many periodic idempotent elements is \ssetale{}.
\end{lemma}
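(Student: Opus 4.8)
The plan is to show directly that the underlying $k$-algebra is a finite product of copies of $k$, and hence visibly \'etale, and then to exploit periodicity to see that $\s$ merely permutes the corresponding primitive idempotents, from which $\s$-separability drops out. Write $R=k\{e_1,\ldots,e_m\}$ with each $e_i$ a periodic idempotent, and fix $n\geq 1$ with $\s^n(e_i)=e_i$ for all $i$. First I would record that every $\s^j(e_i)$ is again a periodic idempotent (idempotency because $\s$ is a ring endomorphism, periodicity because $\s^n$ commutes with $\s^j$). Consequently the set $S=\{\s^j(e_i):1\leq i\leq m,\ j\geq 0\}$ is finite, consists of idempotents, and is permuted by $\s$: indeed $\s(S)\subseteq S$ is clear, while each $e_i=\s^n(e_i)=\s(\s^{n-1}(e_i))$ lies in $\s(S)$, so $\s(S)=S$. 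As a $k$-algebra, $R$ is generated by $S$.

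Next I would pass to the finite Boolean algebra $B\subseteq R$ generated by $S$ under the operations $e\wedge f=ef$, $e\vee f=e+f-ef$ and $\neg e=1-e$. Let $g_1,\ldots,g_r$ be its atoms; these are pairwise orthogonal idempotents with $\sum_i g_i=1$, and every element of $B$ is a sum of atoms. Since products of elements of $S$ lie in $B$, and conversely each $g_i$, being a meet of the idempotents $\s^j(e_i)$ and their complements $1-\s^j(e_i)$, lies in $R$, one obtains $R=\bigoplus_{i=1}^r kg_i\cong k^r$. In particular $R\otimes_k\overline{k}\cong\overline{k}^{\,r}$, so $R$ is \'etale over $k$ and the $g_i$ are precisely its primitive idempotents.

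The crucial step is to show that $\s$ permutes $g_1,\ldots,g_r$. Because $\s$ commutes with $\wedge,\vee,\neg$ and permutes the generating set $S$, it restricts to a \emph{surjective} endomorphism of the finite Boolean algebra $B$, hence to an automorphism of $B$; and an automorphism of a Boolean algebra permutes its atoms, so $\s(g_i)=g_{\pi(i)}$ for some permutation $\pi$ of $\{1,\ldots,r\}$. I expect this to be the main obstacle, in the sense that it is the only point where the periodicity hypothesis is genuinely needed: without it, a ring endomorphism of $k^r$ could send a primitive idempotent to $0$ or to a sum of several $g_j$, destroying both the permutation property and $\s$-separability.

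Finally I would read off $\s$-separability directly from Definition~\ref{defi: sseparable}. For any $\s$-field extension $K|k$ we have $R\otimes_k K\cong K^r$ with $K$-basis the idempotents $g_1\otimes 1,\ldots,g_r\otimes 1$, and $\s(g_i\otimes\lambda)=\s(g_i)\otimes\s(\lambda)=g_{\pi(i)}\otimes\s(\lambda)$. If $\s\big(\sum_i g_i\otimes\lambda_i\big)=\sum_i g_{\pi(i)}\otimes\s(\lambda_i)=0$, then $\s(\lambda_i)=0$ for every $i$, since the $g_{\pi(i)}\otimes 1$ again form a $K$-basis; as $\s$ is injective on the field $K$, all the $\lambda_i$ vanish. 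Hence $\s$ is injective on $R\otimes_k K$, i.e.\ $R\otimes_k K$ is $\s$-reduced for every $K$, so $R$ is $\s$-separable over $k$. Together with \'etaleness this gives that $R$ is \ssetale{} in the sense of Definition~\ref{defi: strongly setale}. (One could equally invoke criterion~(iv) of Proposition~\ref{prop: sseparable}, using that applying the injective ring homomorphism $\s\colon k\to k$ preserves the rank of a coefficient matrix.)
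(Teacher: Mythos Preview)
Your argument is correct and takes a different route from the paper's. You make the structure of $R$ explicit from the outset, showing $R\cong k^r$ via the atoms $g_1,\ldots,g_r$ of the finite Boolean algebra $B$ generated by the $\s$-stable set $S$, and then prove that $\s$ permutes these atoms because $\s$ restricts to a surjective (hence bijective) endomorphism of $B$; $\s$-separability is then immediate from Definition~\ref{defi: sseparable}. The paper instead establishes \'etaleness abstractly (as a quotient of a tensor product of copies of $k\times k$) and proves $\s$-separability via criterion~(v) of Proposition~\ref{prop: sseparable}: every ideal of ${\hs R}$ is generated by some $e\otimes 1$, so the kernel of $\psi\colon{\hs R}\to R$ is generated by an idempotent $e$ with $\s(e)=0$, and expanding $e$ in a $k$-basis of periodic idempotents forces $e=0$. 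Your approach is more elementary and self-contained, bypassing the ${\hs R}$ formalism entirely and yielding the permutation of primitive idempotents (the conclusion of Lemma~\ref{lemma: structure of sfinite ksalgebras} in this special case) as a byproduct; the paper's approach, by contrast, exercises the criteria assembled in Proposition~\ref{prop: sseparable} and does not need the Boolean-algebra bookkeeping.
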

\begin{proof}
	Let $R$ be \ks-algebra $\s$-generated by finitely many periodic idempotent elements. Because the $\s$-generators are periodic, $R$ is in fact generated as a $k$-algebra by finitely many periodic idempotent elements. But then $R$ is in fact generated by finitely many periodic idempotent elements as a $k$-vector space. In particular, $R$ is a finite dimensional $k$-vector space.
	
	We claim that as a $k$-algebra $R$ is isomorphic to a finite product of copies of $k$. To see this, note that for a non-trivial idempotent element $e\in R$ we have $k[e]\simeq k\times k$. Tensor products of $k$-algebras of this form are isomorphic to a finite product of copies of $k$. Also a quotient of a $k$-algebra isomorphic to a finite product of copies of $k$ is again isomorphic to a finite product of copies of $k$. Since $R$ can be written as a quotient of a finite tensor product of $k$-algebras of the form $k[e]\simeq k\times k$ it follows that $R$ is isomorphic to a finite product of copies of $k$.

%	
%	%Conversely, let us assume that $R$ is generated as a $k$-vector space by periodic idempotent elements.
%	For a non-trivial idempotent element $e\in R$ we have $k[e]\simeq k\times k$, which is an \'{e}tale $k$-algebra. Thus a $k$-algebra generated by finitely many idempotent elements can be written as a quotient of a finite tensor product of \'{e}tale $k$-algebras. Since quotients and tensor products of \'{e}tale $k$-algebras are \'{e}tale (\cite[Chapter V, \S 6]{Bourbaki:Algebra2}) it follows that $R$ is \'{e}tale.
% Moreover, $R$ is generated as a $k$-vector space by periodic idempotent elements.

  To show that $R$ is $\s$-separable over $k$ it suffices to show that
	$$\psi\colon \hs R=R\otimes_ k k\to R,\ r\otimes\lambda\mapsto\lambda\s(r)$$ is injective (Proposition \ref{prop: sseparable}). Let $e_1,\ldots,e_n$ be the primitive idempotent elements in $R\simeq k\times\ldots\times k$. These are a $k$-basis of $R$ and therefore $e_1\otimes 1,\ldots,e_n\otimes 1\in R\otimes_k k$ is a $k$-basis of $\hs R$.
	Since $R$ is isomorphic to $k\times\ldots\times k$ as a $k$-algebra, we see that also ${\hs R}$ is isomorphic to  $k\times\ldots\times k$ as a $k$-algebra. So the primitive idempotent elements in $\hs R$ are $e_1\otimes 1,\ldots,e_n\otimes1$. Therefore every idempotent element in $\hs R$ is of the form $e\otimes 1$ for some idempotent element $e\in R$. Every ideal in $\hs R$ is generated by an idempotent element. Thus the kernel of $\psi$ is generated by an element of the form $e\otimes 1$, where $e\in R$ is idempotent. So $\s(e)=0$.
	
	There exists a $k$-basis $d_1,\ldots,d_n$ of $R$ consisting of periodic idempotent elements. We can find an integer $m\geq 1$ such that $\s^m(d_i)=d_i$ for $i=1,\ldots,n$. We may write $e=\lambda_1 d_1+\cdots+\lambda_n d_n$ for $\lambda_1,\ldots,\lambda_n\in k$. Then
	$$0=\s^m(e)=\s^m(\lambda_1)d_1+\cdots+\s^m(\lambda_n)d_n.$$
	Therefore $\s^m(\lambda_1)=\cdots=\s^m(\lambda_n)=0$ and consequently also $e=0$. This shows that $\psi$ is injective.
\end{proof}

It is sometimes convenient to be able to assume that the base $\s$-field $k$ is algebraically closed or separably algebraically closed. This can be achieved by passing to the (separable) algebraic closure. In this respect the following remark is relevant.

\begin{rem} \label{rem: extend s}
	Let $k$ be a $\s$-field and let $\overline{k}$ denote an algebraic closure of $k$. Then $\s\colon k\to k$ can be extended to an endomorphism $\s\colon \overline{k}\to\overline{k}$. The separable algebraic closure $k_s\subset\overline{k}$ of $k$ is stable under $\s$. In particular, $\s\colon k\to k$ can be extended to an endomorphism on the separable algebraic closure of $k$.
\end{rem}
\begin{proof}
	By \cite[Corollary 5.1.16]{Levin:difference} there exists an extension $\s\colon\overline{k}\to\overline{k}$ of $\s\colon k\to k$.

	Recall, for a polynomial $f$ with coefficients in $k$, we write ${\hs f}$ for the polynomial obtained by applying $\s$ to the coefficients of $f$. If $1$ lies in the ideal generated by $f$ and its formal derivative, then $1$ also lies in the ideal generated by ${\hs f}$ and its formal derivative. Thus, ${\hs f}$ is separable if $f$ is separable. If $a\in\overline{k}$ is a solution of $f$, then $\s(a)$ is a solution of ${\hs f}$. Therefore $\s(k_s)\subset k_s$.
\end{proof}

The following proposition provides a partial converse to the last lemma.

\begin{prop} \label{prop: characterize sfinite}
	The $R$ be a $k$-$\s$-algebra that is finite dimensional as a $k$-vector space. Let $k_s$ denote the separable algebraic closure of $k$, equipped with an extension of $\s\colon k\to k$. Then $R$ is \ssetale{} if and only if $R\otimes_k k_s$ is generated as a $k_s$-vector space by periodic idempotent elements.
\end{prop}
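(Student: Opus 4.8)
The plan is to reduce both implications to the base $\s$-field $k_s$, where the structural lemmas about primitive idempotents apply cleanly, and then transport the conclusion back to $k$ using the base-change invariance of Lemma~\ref{lemma: sfinite stable under base extension}. Throughout I regard $k_s$ as a $\s$-field via the extension of $\s$ furnished by Remark~\ref{rem: extend s}, so that $k_s|k$ is a genuine extension of $\s$-fields and the base-change lemma is available.

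For the forward direction, suppose $R$ is \ssetale{}. By Lemma~\ref{lemma: sfinite stable under base extension} applied to the extension $k_s|k$, the $k_s$-$\s$-algebra $R\otimes_k k_s$ is again \ssetale{}. The key point is that \'{e}taleness over a separably closed field is rigid: since $R\otimes_k k_s$ is \'{e}tale over $k_s$ and $k_s$ is separably algebraically closed, it is isomorphic as a $k_s$-algebra to a finite product $k_s^n$, so its primitive idempotents $e_1,\ldots,e_n$ form a $k_s$-basis. Now I invoke Lemma~\ref{lemma: structure of sfinite ksalgebras}: because $R\otimes_k k_s$ is \ssetale{} over $k_s$, the map $\s$ permutes the primitive idempotents, so each $e_i$ is periodic. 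Hence $R\otimes_k k_s$ has a $k_s$-basis consisting of periodic idempotents, which is exactly the required conclusion.

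For the converse, suppose $R\otimes_k k_s$ is generated as a $k_s$-vector space by periodic idempotent elements. Since $R$ is finite dimensional over $k$, the algebra $R\otimes_k k_s$ is finite dimensional over $k_s$, so I can extract finitely many periodic idempotents $d_1,\ldots,d_n$ whose $k_s$-span is all of $R\otimes_k k_s$. In particular $R\otimes_k k_s=k_s\{d_1,\ldots,d_n\}$ is $\s$-generated over $k_s$ by finitely many periodic idempotents, so Lemma~\ref{lemma: sgenerated by periodic idempotent implies ssetale} shows that $R\otimes_k k_s$ is \ssetale{} over $k_s$. Applying Lemma~\ref{lemma: sfinite stable under base extension} once more, now in the reverse direction, yields that $R$ is \ssetale{} over $k$.

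The argument is mostly bookkeeping once the earlier lemmas are in place; the one step that requires genuine care is the reduction in the forward direction, namely that an \'{e}tale algebra over the separably closed field $k_s$ splits as $k_s^n$, so that the primitive idempotents actually span the whole algebra \emph{as a $k_s$-vector space} and not merely generate it as an algebra. This is precisely what matches the vector-space formulation in the statement, and once it is granted, the periodicity supplied by Lemma~\ref{lemma: structure of sfinite ksalgebras} closes the gap.
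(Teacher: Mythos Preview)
Your proof is correct and follows essentially the same route as the paper: reduce to $k_s$ via Lemma~\ref{lemma: sfinite stable under base extension}, then use Lemma~\ref{lemma: structure of sfinite ksalgebras} for the forward direction and Lemma~\ref{lemma: sgenerated by periodic idempotent implies ssetale} for the converse. The only cosmetic difference is that the paper assumes $k=k_s$ at the outset rather than carrying the tensor product through, and cites Waterhouse for the splitting of an \'{e}tale algebra over a separably closed field where you state it directly.
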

\begin{proof}
	By Lemma \ref{lemma: sfinite stable under base extension} we may assume $k=k_s$.
	
	Let us first assume that $R$ is \ssetale{}. Since $k=k_s$ and $R$ is an \'{e}tale $k$-algebra, $R$ is generated as a $k$-vector space by idempotent elements (cf. \cite[Theorem 6.2]{Waterhouse:IntroductiontoAffineGroupSchemes}). By Lemma \ref{lemma: structure of sfinite ksalgebras} every idempotent element of $R$ is periodic.
	
	Conversely, if $R$ is generated by periodic idempotent elements, then it follows from Lemma~\ref{lemma: sgenerated by periodic idempotent implies ssetale} that $R$ is \ssetale{}.
	%	 For a non-trivial idempotent element $e\in R$ we have $k[e]\simeq k\times k$, which is an \'{e}tale $k$-algebra. Thus a $k$-algebra generated by finitely many idempotent elements can be written as a quotient of a finite tensor product of \'{e}tale $k$-algebras. Since quotients and tensor products of \'{e}tale $k$-algebras are \'{e}tale it follows that $R$ is \'{e}tale. 
	%	
	%	To show that $R$ is $\s$-separable over $k$ it suffices to show that
	%	$$\psi\colon \hs R=R\otimes_ k k\to R,\ r\otimes\lambda\mapsto\lambda\s(r)$$ is injective (Lemma ). Let $e_1,\ldots,e_n$ be the primitive idempotent elements in $R$. These are a $k$-basis of $R$ and therefore $e_1\otimes 1,\ldots,e_n\otimes 1\in R\otimes_k k$ is a $k$-basis of $\hs R$. Moreover, the primitive idempotent elements in $\hs R$ are $e_1\otimes 1,\ldots,e_n\otimes1$. Thus every idempotent element in $\hs R$ is of the form $e\otimes 1$ for some idempotent element $e\in R$. Every ideal in $\hs R$ is generated by an idempotent element. Therefore the kernel of $\psi$ is generated by an element of the form $1\otimes e$, where $e\in R$ is idempotent. So $\s(e)=0$.
	%	
	%	By assumption there exists a $k$-basis $d_1,\ldots,d_n$ of $R$ consisting of periodic idempotent elements. We can find an integer $m\geq 1$ such that $\s^m(d_i)=d_i$ for $i=1,\ldots,n$. We may write $e=\lambda_1 d_1+\cdots+\lambda_n d_n$ for $\lambda_1,\ldots,\lambda_n\in k$. Then
	%	$$0=\s^m(e)=\s^m(\lambda_1)d_1+\cdots+\s^m(\lambda_n)d_n.$$
	%	Therefore $\s^m(\lambda_1)=\cdots=\s^m(\lambda_n)=0$ and consequently also $e=0$. This shows that $\psi$ is injective.
\end{proof}

\begin{lemma} \label{lemma: sfinite algebras}
	\begin{enumerate}
		\item A $k$-$\s$-subalgebra of a \ssetale{} $k$-$\s$-algebra is \ssetale{}.
	\item The tensor product of two \ssetale{} $k$-$\s$-algebras is \ssetale{}.
	\item The quotient of a \ssetale{} $k$-$\s$-algebra by a $\s$-ideal is a \ssetale{} $k$-$\s$-algebra.
\end{enumerate}
\end{lemma}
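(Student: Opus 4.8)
The plan is to verify the two defining properties --- being \'etale as a $k$-algebra and being $\s$-separable over $k$ --- separately in each of the three cases. The \'etale parts are all standard facts from commutative algebra. For (ii) and (iii), the statements that tensor products and quotients of \'etale $k$-algebras are \'etale are recorded in \cite[Chapter V, \S 6]{Bourbaki:Algebra2} and were already used above. For (i), if $S$ is a $k$-subalgebra of an \'etale $k$-algebra $R$, then tensoring the inclusion $S\hookrightarrow R$ with an algebraic closure $\overline{k}$ (which is flat over $k$) realizes $S\otimes_k\overline{k}$ as a $\overline{k}$-subalgebra of $R\otimes_k\overline{k}\cong\overline{k}^n$; being a finite-dimensional reduced commutative algebra over an algebraically closed field, it is a finite product of copies of $\overline{k}$, so $S$ is \'etale.

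For the $\s$-separability parts of (i) and (ii), little work remains. In (ii), the tensor product of two $\s$-separable \ks-algebras is $\s$-separable by Corollary~\ref{cor: tensor product of sseparable salgebras is sseparable}. In (i), I would invoke criterion (iv) of Proposition~\ref{prop: sseparable}: if $f_1,\ldots,f_n$ in a $\s$-subalgebra $S\subseteq R$ are $k$-linearly independent, they remain $k$-linearly independent in $R$, whence $\s(f_1),\ldots,\s(f_n)$ are $k$-linearly independent in $R$; since $S$ is $\s$-stable these elements lie in $S$ and are therefore $k$-linearly independent there, which is precisely criterion (iv) for $S$.

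The one genuinely delicate point is the $\s$-separability of a quotient $R/I$ by a $\s$-ideal $I$ in (iii). Here criterion (iv) does not transfer directly, because a lift to $R$ of a $k$-linearly independent family in $R/I$ need not have $\s$-images that remain independent modulo $I$; so I would argue structurally instead. Since $R/I$ is finite-dimensional and \'etale, by Proposition~\ref{prop: characterize sfinite} it suffices to show that $(R/I)\otimes_k k_s$ is generated as a $k_s$-vector space by periodic idempotent elements, where $k_s$ carries an extension of $\s$ as in Remark~\ref{rem: extend s}. Flatness of $k_s$ over $k$ identifies $(R/I)\otimes_k k_s$ with $(R\otimes_k k_s)/(I\otimes_k k_s)$, and $I\otimes_k k_s$ is a $\s$-ideal because $\s(x)\in I$ for $x\in I$; thus the quotient map is a surjective morphism of $k_s$-$\s$-algebras. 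Applying Proposition~\ref{prop: characterize sfinite} to the \ssetale{} algebra $R$ itself, $R\otimes_k k_s$ is generated over $k_s$ by periodic idempotents, and the image of a periodic idempotent under a $\s$-equivariant ring map is again a periodic idempotent. These images generate $(R/I)\otimes_k k_s$ as a $k_s$-vector space, so the criterion of Proposition~\ref{prop: characterize sfinite} applies and $R/I$ is \ssetale{}. I expect the passage to $k_s$ and the reduction to periodic idempotents to be the main obstacle, precisely because the naive linear-algebra criterion is not stable under quotients.
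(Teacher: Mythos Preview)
Your proposal is correct and follows essentially the same approach as the paper. In particular, for (iii) the paper argues exactly as you do: pass to $k_s$, identify $(R/I)\otimes_k k_s$ with $(R\otimes_k k_s)/(I\otimes_k k_s)$, invoke Proposition~\ref{prop: characterize sfinite} on $R$ to get periodic idempotents generating $R\otimes_k k_s$, push them down, and apply the proposition again; for (i) and (ii) the paper likewise cites the standard \'etale facts together with Corollary~\ref{cor: tensor product of sseparable salgebras is sseparable} and the evident stability of $\s$-separability under subalgebras.
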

\begin{proof}
	The first statement follows from the fact that a $k$-subalgebra of an \'{e}tale $k$-algebra is \'{e}tale (\cite[Prop. 3, Chapter V, \S 6, No. 4, A.V.30]{Bourbaki:Algebra2}) and the obvious fact that a \ks-subalgebra of a $\s$-separable \ks-algebra is $\s$-separable.
	
	The second statement follows from the fact that the tensor product of two \'{e}tale \mbox{$k$-a}lgebras is \'{e}tale (\cite[Cor. 1, Chapter V, \S 6, No. 5, A.V.32]{Bourbaki:Algebra2}) and the fact that the tensor product of two $\s$-separable $k$-$\s$-algebras is $\s$-separable (Corollary \ref{cor: tensor product of sseparable salgebras is sseparable}).
	
	Finally we prove the third statement. Let $R$ be a \ssetale{} $k$-$\s$-algebra and $\ida$ a $\s$-ideal of $R$. Let $k_s$ denote the separable algebraic closure of $k$, equipped with an extension of $\s$, as per Remark \ref{rem: extend s}. Then
	$$(R/\ida)\otimes_k k_s=(R\otimes_k k_s)/(\ida\otimes_k k_s).$$
	It follows from Proposition \ref{prop: characterize sfinite}, that the right-hand side is generated as a $k_s$-vector space by periodic idempotent elements. Applying the proposition again yields the conclusion.
\end{proof}

The following lemma shows that being \ssetale{} is transitive.
\begin{lemma} \label{lemma: transitivity of ssetale}
	Let $K|k$ be a \ssetale{} $\s$-field extension, i.e., $K$ is a finite, separable, $\s$-separable $\s$-field extension of $k$. Let $R$ be a \ssetale{} $K$-$\s$-algebra. Then $R$ is a \ssetale{} \ks-algebra. 
\end{lemma}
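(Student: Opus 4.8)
The plan is to verify separately the two defining conditions of being \ssetale{} for $R$ viewed as a \ks-algebra: that $R$ is \'{e}tale as a $k$-algebra, and that $R$ is $\s$-separable over $k$. The difference-algebraic content lies entirely in the second condition, while the first is a purely commutative-algebra transitivity statement.

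For the \'{e}tale condition I would invoke transitivity of \'{e}tale algebras. Since $K|k$ is finite and separable, $K$ is an \'{e}tale $k$-algebra, and $R$ is \'{e}tale over $K$ by hypothesis; being \'{e}tale is stable under composition of the structure maps $k\to K\to R$ (\cite[Chapter V, \S 6]{Bourbaki:Algebra2}). Concretely, one may base change to an algebraic closure $\overline{k}$ of $k$: writing
$$R\otimes_k\overline{k}\cong R\otimes_K(K\otimes_k\overline{k})$$
and using that $K\otimes_k\overline{k}$ is a finite product of copies of $\overline{k}$ (as $K|k$ is finite separable), one finds that $R\otimes_k\overline{k}$ is a finite product of base changes of $R$ along the various embeddings $K\hookrightarrow\overline{k}$, each of which is a finite product of copies of $\overline{k}$ because $R$ is \'{e}tale over $K$. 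Hence $R\otimes_k\overline{k}$ is a finite product of copies of $\overline{k}$, i.e., $R$ is \'{e}tale over $k$.

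The $\s$-separability is handled most cleanly through the characterizations in Proposition~\ref{prop: sseparable}. By Definition~\ref{defi: sseparable} it suffices to show that $R\otimes_k L$ is $\s$-reduced for an arbitrary $\s$-field extension $L|k$. The key observation is the associativity isomorphism
$$R\otimes_k L\cong R\otimes_K(K\otimes_k L),$$
which respects the $\s$-structure (immediate from $\s(r\otimes s)=\s(r)\otimes\s(s)$) and is an isomorphism of $K$-$\s$-algebras. Now $K\otimes_k L$ is $\s$-reduced because $K$ is $\s$-separable over $k$ (Definition~\ref{defi: sseparable} applied to the extension $L|k$), and it is naturally a $\s$-reduced $K$-$\s$-algebra. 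Since $R$ is $\s$-separable over $K$, condition~(vii) of Proposition~\ref{prop: sseparable}, with base $K$, applies to the $\s$-reduced $K$-$\s$-algebra $K\otimes_k L$ and yields that $R\otimes_K(K\otimes_k L)$ is $\s$-reduced. Transporting through the isomorphism above, $R\otimes_k L$ is $\s$-reduced, as required.

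I expect no serious obstacle: the only genuinely delicate choice is selecting the right characterization from Proposition~\ref{prop: sseparable}. It is precisely condition~(vii) — allowing an \emph{arbitrary} $\s$-reduced intermediate $K$-$\s$-algebra rather than only a $\s$-field — that makes the tower argument work, since $K\otimes_k L$ need not be a field. The remaining verifications (compatibility of the associativity isomorphism with $\s$, and the correct separability hypothesis on $K|k$ for the \'{e}tale step) are routine.
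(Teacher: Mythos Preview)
Your proof is correct. Both you and the paper handle the \'{e}tale part by transitivity of \'{e}tale algebras, but you diverge on the $\s$-separability step. The paper works with characterization~(v) of Proposition~\ref{prop: sseparable}: it must show that $\psi\colon R\otimes_k k\to R$ is injective, and it does so by using that $R$ is a finite-dimensional $k$-vector space to reduce to proving surjectivity, which it then verifies explicitly by composing the surjectivity of $R\otimes_K K\to R$ and $K\otimes_k k\to K$. Your route via characterization~(vii) and the associativity isomorphism $R\otimes_k L\cong R\otimes_K(K\otimes_k L)$ is cleaner and in fact proves more: your argument establishes transitivity of $\s$-separability for any tower $k\subset K\subset R$ of $\s$-rings with $K$ a $\s$-field, without ever invoking finite-dimensionality. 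The paper's approach, by contrast, genuinely leans on the \'{e}tale hypothesis even for the $\s$-separable part. Your observation that~(vii) is the right characterization precisely because $K\otimes_k L$ need not be a field is exactly the point.
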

\begin{proof}
	Since $R$ is an \'{e}tale $K$-algebra and $K$ is an \'{e}tale $k$-algebra it follows from \cite[Cor. 2 (b), A.V.32]{Bourbaki:Algebra2} that $R$ is an \'{e}tale $k$-algebra. Thus it suffices to show that $R$ is $\s$-separable over $k$. So we have to show that the map
	\begin{align*}\psi\colon {\hs R}=R\otimes_k k & \to R \\ r\otimes\lambda & \mapsto \s(r)\lambda	
	\end{align*}
	is injective. Because $R$ is a finite dimensional $k$-vector space and $\psi$ is $k$-linear, it suffices to show that $\psi$ is surjective. So let $r\in R$.
	By assumption the canonical maps
	$R\otimes_K K\to R$ and $K\otimes_k k\to K$ are injective and thus surjective. So we can write $r=\sum \s(r_i)\mu_i$ with $r_i\in R$ and $\mu_i\in K$. Similarly, we can write $\mu_i=\sum\s(a_{ij})\lambda_{ij}$ with $a_{ij}\in K$ and $\lambda_{ij}\in k$. Then
	$$r=\sum_{i,j}\s(r_ia_{ij})\lambda_{ij}$$ lies in the image of $\psi$.	
\end{proof}

%%%%%%%%%%%%%%%%%%%%%%%%%%%%%%%%%%%%%%%%

%\subsection{$\pis(R)$}
\subsection{The strong core}

%In this subsection we show that, for a \ks-algebra $R$, the union $\pis(R)$ of all \ssetale{} \ks-subalgebras has good functorial properties.

%IVAN: repetitive

\begin{defi} \label{defi:  pis}
	Let $R$ be a $k$-$\s$-algebra. We define the \emph{strong core}
	$$\pis(R)=\pis(R|k)$$ as the union of all \ssetale{} $k$-$\s$-subalgebras of $R$.
\end{defi}
In this section we show that this construction has good functorial properties.
\begin{rem}
We have that $\pis(R)$ is a $\s$-separable $k$-$\s$-subalgebra of $R$. 
%Clearly $\pis(R)$ is $\s$-separable over $k$.
Indeed, if $R_1$ and $R_2$ are \ssetale{} $k$-$\s$-subalgebras of a $k$-$\s$-algebra $R$, then also $R_1R_2$ is a \ssetale{} $k$-$\s$-algebra by Lemma \ref{lemma: sfinite algebras}, since $R_1R_2$ may be written as a quotient of $R_1\otimes_k R_2$. 
\end{rem}

 Note that $\pis(R)$ need not be \ssetale{} in general. Indeed, $\pis(R)$ is strongly $\s$-\'{e}tale if and only if $\pis(R)$ is finitely $\s$-generated over $k$. This is also equivalent to $\pis(R)$ being finite dimensional as a $k$-vector space.
 
%If $L/K$ is an extension of $\s$-fields, then finitely generated 

\begin{conj}\label{fin-si-gen}
If $R$ is a finitely $\s$-generated $k$-$\s$-algebra, then $\pis(R)$ is a \ssetale{} $k$-$\s$-algebra. 
\end{conj}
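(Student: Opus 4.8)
The plan is to convert the statement into a finiteness assertion about idempotents, and then attack that assertion over a separably closed base. As observed just before the statement, $\pis(R)$ is \ssetale{} if and only if it is finite dimensional as a $k$-vector space, so it suffices to bound $\dim_k\pis(R)$. First I would reduce to the case $k=k_s$. Let $k_s$ be the separable algebraic closure of $k$, equipped with an extension of $\s$ as in Remark \ref{rem: extend s}. For every \ssetale{} \ks-subalgebra $S\subseteq R$ the algebra $S\otimes_k k_s$ is \ssetale{} over $k_s$ by Lemma \ref{lemma: sfinite stable under base extension}, hence is contained in $\pis(R\otimes_k k_s)$; since $-\otimes_k k_s$ is flat and commutes with the directed union defining the core, this gives $\pis(R)\otimes_k k_s\subseteq\pis(R\otimes_k k_s)$ and therefore $\dim_k\pis(R)=\dim_{k_s}\big(\pis(R)\otimes_k k_s\big)\le\dim_{k_s}\pis(R\otimes_k k_s)$. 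As $R\otimes_k k_s$ is again finitely $\s$-generated, we may assume $k=k_s$ throughout.

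With $k$ separably closed, Proposition \ref{prop: characterize sfinite} and Lemma \ref{lemma: sgenerated by periodic idempotent implies ssetale} together identify $\pis(R)$ with the $k$-subalgebra of $R$ generated by all periodic idempotent elements: every \ssetale{} subalgebra is spanned by periodic idempotents, while any finite set of periodic idempotents $\s$-generates a \ssetale{} subalgebra. Hence $\pis(R)$ is finite dimensional if and only if $R$ contains only finitely many periodic idempotents. Since idempotents are unchanged under $R\rightsquigarrow R_{\mathrm{red}}$, they correspond to clopen subsets of the profinite space $C=\pi_0(\spec R)$ of connected components; the endomorphism $\s$ induces a continuous self-map $\bar\s\colon C\to C$, and an idempotent is periodic of period dividing $n$ exactly when its clopen set is $\bar\s^{\,n}$-invariant. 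Thus the goal becomes: \emph{the Boolean algebra of $\bar\s$-periodic clopen subsets of $C$ is finite}, i.e. the periodic part of the profinite dynamical system $(C,\bar\s)$ is finite.

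The crux, and the only genuine difficulty, is a uniform finiteness for this dynamical system. Fixing a finite $\s$-generating set $a_1,\dots,a_m$, I would use the order filtration $R=\bigcup_N R_N$ with $R_N=k[\s^j(a_i):i\le m,\ j\le N]$, where each $R_N$ is of finite type, hence has only finitely many idempotents, and $\s(R_N)\subseteq R_{N+1}$. Every periodic idempotent lies in some $R_N$, so the conjecture would follow from a bound, \emph{independent of the period}, on the order $N$ needed to accommodate periodic idempotents, equivalently on the number of $\bar\s$-periodic connected components of $\spec R$. The route I would pursue is to reduce to the $\s$-reduced situation, invoke the Ritt--Raudenbush basis theorem to obtain the finitely many minimal prime $\s$-ideals, and then argue that each periodic component specializes to one of the finitely many associated generic points with period controlled by the splitting behaviour there. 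The hard part will be that $\spec R$ is \emph{not} of finite type: periodic idempotents of arbitrarily large period may a priori appear at arbitrarily high order $N$, and excluding their accumulation appears to need a new uniform-boundedness input on periodic points of $\s$-varieties (of the kind supplied by intersection-theoretic or model-theoretic finiteness in ACFA) beyond the elementary permanence properties established above. This is precisely why the statement is only recorded as a conjecture.
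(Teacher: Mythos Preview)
The statement you are addressing is recorded in the paper as a \emph{conjecture}, not a theorem; the paper gives no proof of it in general. The only case established there is the Hopf algebra case (Theorem~\ref{ssetale-hopf}), and that argument does not proceed via your idempotent/dynamical route at all: it uses that $\pis$ is a monoidal functor (Corollary~\ref{monoidal}) to see $\pis(R)$ is a \ks-Hopf subalgebra, and then invokes the external finiteness input Fact~\ref{hopf-si-fg} that \ks-Hopf subalgebras of finitely $\s$-generated \ks-Hopf algebras are themselves finitely $\s$-generated.

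Your reductions are nonetheless sound and essentially already in the paper. The base-change step is exactly Lemma~\ref{lemma: pis stable under base extension}, which in fact gives the \emph{equality} $\pis(R\otimes_k k_s|k_s)=\pis(R|k)\otimes_k k_s$, so you need not argue the inclusion separately. Over a separably closed base your identification of $\pis(R)$ with the span of periodic idempotents is Lemma~\ref{lemma: pis for separably algebraically closed}. Your reformulation ``$\pis(R)$ is finite dimensional $\Leftrightarrow$ $R$ has only finitely many periodic idempotents'' is correct, and your honest assessment that the remaining uniform bound on the order of periodic idempotents is the genuine obstacle is exactly why the paper leaves this as a conjecture. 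In short: there is no proof to compare against; your outline correctly isolates the missing ingredient, and the paper's partial result bypasses it by importing Hopf-algebraic finiteness rather than by analysing periodic idempotents.
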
 
  %It is unclear to the author if $\pis(R)$ is \ssetale{} for an arbitrary finitely $\s$-generated \ks-algebra $R$. 
  We will see later (Theorem~\ref{ssetale-hopf}) that $\pis(R)$ is a \ssetale{} \ks-subalgebra of $R$ if $R$ is a finitely $\s$-generated $k$-$\s$-Hopf algebra. Before establishing further properties of the strong core, let us look at an example. Another example computation of a strong core is given in Example~\ref{mahler-example}.

\begin{ex}
	Let $R=k\times\ldots\times k$ be an $n$-fold direct product of copies of $k$ and let $\tau\colon\{1,\ldots,n\}\to \{1,\ldots,n\}$ be a (not necessarily bijective) map. We extend $\s$ to $R$ by $$\s((a_1,\ldots,a_n))=(\s(a_{\tau(1)}),\ldots,\s(a_{\tau(n)})).$$ We will determine $\pis(R)$.	
%	We have a decreasing chain of sets
%	$$\{1,\ldots,n\}\supset \tau(\{1,\ldots,n\})\supset\tau^2(\{1,\ldots,n\})\supset\ldots $$
%	which must eventually stabilize. Say $M=\tau^m(\{1,\ldots,n\})=\tau^{m+1}(\{1,\ldots,n\})=\ldots$. Two orbits of elements in $\{1,\ldots,n\}$ need not be disjoint because $\tau$ need not be bijective. However, $\tau$ induces a bijection on $M$ and therefore $M$ is the disjoint union of orbits under $\tau$. Moreover, every element of $\{1,\ldots,n\}$ corresponds to a unique orbit of $\tau$ on $M$: If $d$ is large enough, $\tau(j)$ lies in a unique $\tau$-orbit on $M$ for every $j\in\{1,\ldots,n\}$. This yields a partition $M_1\uplus\ldots\uplus M_r=\{1,\ldots,n\}$ such that each $M_l$ is stable under $\tau$.
		We begin by defining an equivalence relation on $\{1,\ldots,n\}$ by $i\sim j$ if there exists an $n\geq 0$ such that $\tau^n(i)=\tau^n(j)$.
		Set $$S=\{(a_1,\ldots,a_n)\in R\,|\ a_i=a_j \text{ if } i\sim j\}.$$
	As $i\sim j$ implies $\tau(i)\sim\tau(j)$, %Ivan: corrected \tau(i)=\tau(j)%
	we see that $S$ is a \ks-subalgebra of $R$. Moreover, $\tau$ induces a bijection on the equivalence classes. It follows that $S$ is isomorphic to a \ssetale{} \ks-algebra of the form described in the beginning of Example \ref{ex: ssetale}. In particular, $S\subset \pis(R)$. 
	 
	 Note that $R\otimes_k k^*=k^*\times\ldots\times k^*$ and $\s$ on $R\otimes_k k^*$ is given by exactly the same formula as on $R$. Furthermore, $\s$ is bijective on $S\otimes_k k^*\subset R\otimes_k k^*$.
	 
	 Suppose that $S_1$ is a \ks-subalgebra of $R$ that properly contains $S$. If $f$ is any element of $R$, then $\s^m(f)\in S$ for $m$ large enough. In particular, given $f\in S_1\smallsetminus S$, we have $\s^m(f\otimes 1)\in S\otimes_k k^*$ for some $m$. Because $\s$ is bijective on $S\otimes_k k^*$, we see that $\s^m(f\otimes 1)=\s(h)$ for  some $h\in S\otimes_k k^*$. This shows that $\s$ is not injective on $S_1\otimes_k k^*$. Thus $S_1$ is not $\s$-separable over $k$. We conclude that $\pis(R)=S$.	
\end{ex}

\begin{lemma} \label{lemma: pis for separably algebraically closed}
	Let $R$ be a $k$-$\s$-algebra. If $k$ is separably algebraically closed, then $\pis(R)$ is the $k$-subvector space of $R$ generated by all periodic idempotent elements of $R$.
\end{lemma}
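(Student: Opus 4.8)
Write $V\subseteq R$ for the $k$-subvector space generated by all periodic idempotent elements of $R$. The plan is to prove the two inclusions $\pis(R)\subseteq V$ and $V\subseteq\pis(R)$ separately, each as an almost immediate consequence of the structure results already established, so that no genuine new computation is needed.

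For $\pis(R)\subseteq V$, I would take an arbitrary \ssetale{} \ks-subalgebra $S$ of $R$ and show $S\subseteq V$; since $\pis(R)$ is by definition the union of all such $S$, this suffices. Because $k$ is separably algebraically closed we may take $k_s=k$ in Proposition~\ref{prop: characterize sfinite}, which then asserts that $S$ is generated as a $k$-vector space by its periodic idempotent elements. These are in particular periodic idempotent elements of $R$, since idempotency and the relation $\s^n(e)=e$ are intrinsic conditions preserved by passing to the subalgebra. Hence $S\subseteq V$, and taking the union over all \ssetale{} \ks-subalgebras gives $\pis(R)\subseteq V$.

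For $V\subseteq\pis(R)$, it is enough to check that every periodic idempotent $e\in R$ lies in $\pis(R)$, because $\pis(R)$ is a \ks-subalgebra of $R$ (as recorded in the Remark following Definition~\ref{defi:  pis}), and in particular a $k$-subspace, so it will automatically contain the span $V$. Given such an $e$, I would consider the \ks-subalgebra $k\{e\}$ it $\s$-generates: it is $\s$-generated by the single periodic idempotent $e$, so Lemma~\ref{lemma: sgenerated by periodic idempotent implies ssetale} shows that $k\{e\}$ is \ssetale{}. Therefore $e\in k\{e\}\subseteq\pis(R)$, as required.

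The argument is short precisely because the hard work has already been packaged into Proposition~\ref{prop: characterize sfinite} and Lemma~\ref{lemma: sgenerated by periodic idempotent implies ssetale}. The only point requiring a little care — the main, and quite minor, obstacle — is the first inclusion: one must invoke the hypothesis that $k$ is separably algebraically closed exactly in order to reduce Proposition~\ref{prop: characterize sfinite} to a statement about $R$ itself rather than about $R\otimes_k k_s$, and thereby guarantee that the spanning periodic idempotents it produces genuinely live in $R$ rather than only in a base-changed algebra.
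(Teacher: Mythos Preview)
Your proof is correct and follows exactly the same approach as the paper: one inclusion via Proposition~\ref{prop: characterize sfinite} (using $k_s=k$) and the other via Lemma~\ref{lemma: sgenerated by periodic idempotent implies ssetale}. The paper's proof is simply a terser version of what you wrote.
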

\begin{proof}
	It is clear from Proposition \ref{prop: characterize sfinite} that $\pis(R)$ is contained in the $k$-subvector space of $R$ generated by all periodic idempotent elements of $R$.
	
	On the other hand, $k\{e\}\subset\pis(R)$ for every periodic idempotent element $e\in R$ by Lemma \ref{lemma: sgenerated by periodic idempotent implies ssetale}.
\end{proof}

%%\begin{cor}
%%	Let $R$ be a \ks-algebra. If 
%%\end{cor}

We will need a simple algebraic (rather than difference algebraic) lemma on idempotent elements.  

\begin{lemma} \label{lemma: idempotents descend}
	Let $k$ be an algebraically closed field, $K$ a field extension of $k$ and $R$ a $k$-algebra. Then every idempotent element $e$ of $R\otimes_k K$ is of the form $e=f\otimes 1$ for some idempotent element $f\in R$.
\end{lemma}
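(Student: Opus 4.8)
The plan is as follows. Observe first that $f\mapsto f\otimes 1$ is injective (e.g.\ project onto the $1$-coordinate of a $k$-basis of $K$ containing $1$), so an $f$ as in the statement, if it exists, is unique, and it is idempotent exactly when $f\otimes 1$ is; thus the only real content is that every idempotent of $R\otimes_k K$ is \emph{constant}, i.e.\ lies in $R\otimes 1$. I would begin by reducing to the case that $R$ is reduced. Let $N=\operatorname{nil}(R)$ be the nilradical. Since $K$ is free, hence flat, over $k$, the sequence $0\to N\otimes_k K\to R\otimes_k K\to (R/N)\otimes_k K\to 0$ is exact, and $N\otimes_k K$ is a nil ideal of $R\otimes_k K$ (each $n\otimes c$ with $n\in N$ is nilpotent, and a finite sum of nilpotents in a commutative ring is nilpotent). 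As idempotents lift uniquely modulo a nil ideal, both reduction maps $\operatorname{Idem}(R)\to\operatorname{Idem}(R/N)$ and $\operatorname{Idem}(R\otimes_k K)\to\operatorname{Idem}((R/N)\otimes_k K)$ are bijective. Granting the statement for $R/N$, the image $\bar e$ of $e$ is of the form $\bar f\otimes 1$ with $\bar f\in R/N$ idempotent; lifting $\bar f$ to the unique idempotent $f\in R$, the element $f\otimes 1$ is an idempotent of $R\otimes_k K$ with the same image $\bar f\otimes 1=\bar e$, whence $e=f\otimes 1$ by injectivity.

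Assume now $R$ is reduced. I would fix a $k$-basis $(\lambda_i)_{i\in I}$ of $K$ with $\lambda_{0}=1$ and write $e=\sum_i r_i\otimes\lambda_i$ (a finite sum, $r_i\in R$). For each prime $\mathfrak{p}$ of $R$, the residue field $\kappa(\mathfrak{p})=\operatorname{Frac}(R/\mathfrak{p})$ is a field extension of $k$, and the induced $k$-algebra map $R\to\kappa(\mathfrak{p})$ gives, after $-\otimes_k K$, a ring map sending $e$ to $\sum_i \overline{r_i}\,(1\otimes\lambda_i)$, where $\overline{r_i}\in\kappa(\mathfrak{p})$ is the image of $r_i$ and $(1\otimes\lambda_i)_{i\in I}$ is a $\kappa(\mathfrak{p})$-basis of $\kappa(\mathfrak{p})\otimes_k K$. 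Here is where algebraic closedness of $k$ enters: $\kappa(\mathfrak{p})\otimes_k K$ is an \emph{integral domain}, since every field extension of an algebraically closed field is regular and therefore stays a domain after tensoring with any field extension over $k$. Hence the image of the idempotent $e$ is $0$ or $1=1\otimes\lambda_0$, and comparing coordinates in the basis $(1\otimes\lambda_i)$ forces $\overline{r_i}=0$, i.e.\ $r_i\in\mathfrak{p}$, for every $i\neq 0$. As $\mathfrak{p}$ was arbitrary and $R$ is reduced, $r_i\in\bigcap_{\mathfrak{p}}\mathfrak{p}=0$ for all $i\neq 0$, so $e=r_0\otimes 1$; finally $e^2=e$ together with injectivity of $f\mapsto f\otimes 1$ gives $r_0^2=r_0$, and $f:=r_0$ works.

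The main obstacle --- really the only substantive input --- is the fact that $\kappa(\mathfrak{p})\otimes_k K$ has no nontrivial idempotents, which is precisely where the hypothesis that $k$ is algebraically closed is used (via regularity of field extensions of $k$, equivalently the statement that $L\otimes_k K$ is a domain for every field extension $L|k$); this is the ordinary-tensor-product analog of the $\s$-reducedness inputs used elsewhere in this section. The remaining points, namely that $N\otimes_k K$ is a nil ideal and that idempotents lift uniquely modulo nil ideals, are routine.
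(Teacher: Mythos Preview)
Your proof is correct. The paper's own proof is a two-line citation: it observes that a field extension of an algebraically closed field is geometrically connected (in the sense of \cite[Tag 037T]{stacks-project}) and then invokes \cite[Tag 037W]{stacks-project} to conclude. Your argument is a direct, self-contained unpacking of essentially the same idea: the crucial input in both approaches is that for any field extension $L$ of the algebraically closed field $k$, the ring $L\otimes_k K$ is connected (you use the slightly stronger fact that it is a domain, via regularity of $L|k$). Where the paper appeals to a black-box scheme-theoretic statement about connected components under base change, you instead reduce to $R$ reduced and then test at each prime $\mathfrak{p}$, using that $\kappa(\mathfrak{p})\otimes_k K$ has only trivial idempotents to force the non-constant coefficients of $e$ into every prime. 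Your route is more elementary and makes the role of the hypothesis on $k$ completely explicit; the paper's route is shorter and situates the lemma within a standard framework. Both are fine.
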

\begin{proof}
	A field extension of an algebraically closed field is geometrically connected (in the sense of Definition \cite[Tag 037T]{stacks-project}). The claim thus follows from \cite[Tag 037W]{stacks-project}.
\end{proof}

Our next important goal is to show that the formation of $\pis$ is compatible with extension of the base $\s$-field. To prove this for the extension $k_s|k$, where $k_s$ denotes the separable algebraic closure of $k$ we will use Galois descent. In this connection we need to introduce an action of $\s$ on the absolute Galois group of $k$.
%that will also be relevant in Section \ref{sec: Profinite sgroups}, where we consider certain actions of the absolute Galois group that intertwine with $\s$.
To define this action we need a simple lemma on field extensions. We include the proof for the sake of completeness. The reader should be warned that in this article Galois extensions are not necessarily finite.

 % (compare to \cite[2.20]{ive-tgs}, \cite[3.2]{ive-tgsacfa}).
%Because of lack of a suitable reference we include the proof.

\begin{lemma} \label{lemma: extension of s almost unique}
	Let $K$ be a $\s$-field, $L$ a Galois extension of $k$ and $\s_1,\s_2\colon L\to L$ two extensions of $\s\colon K\to K$. Then there exists a unique element $\tau$ in the Galois group of $L|K$ such that $\s_2=\s_1\tau$.
\end{lemma}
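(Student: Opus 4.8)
The plan is to exhibit $\tau$ concretely as $\tau=\s_1^{-1}\circ\s_2$; the crux is therefore to establish that the two extensions have the same image, $\s_1(L)=\s_2(L)$, so that $\s_1^{-1}$ may be applied to $\s_2(x)$ for every $x\in L$. Observe first that $\s_1$ and $\s_2$ are field homomorphisms, hence injective, and that they restrict to $\s$ on $K$; in particular $\s_1(\lambda)=\s(\lambda)=\s_2(\lambda)$ for $\lambda\in K$, so each $\s_i$ is an isomorphism of fields onto its image $\s_i(L)$, and both images contain $\s(K)$.

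To prove $\s_1(L)=\s_2(L)$ I would argue element by element, using only that $L|K$ is normal; this treats the finite and infinite cases uniformly and sidesteps the fact that the $\s_i$ are merely injective endomorphisms of $L$, not automorphisms. Fix $x\in L$ and let $f\in K[T]$ be its minimal polynomial over $K$. Since $L|K$ is normal, $f$ splits in $L$, say $f=\prod_j(T-x_j)$ with $x_j\in L$. Applying the homomorphism $\s_1$ to this factorisation gives ${\hs f}=\prod_j(T-\s_1(x_j))$ in $L[T]$, so ${\hs f}$ splits completely already in $\s_1(L)$. On the other hand, from $f(x)=0$ we obtain ${\hs f}(\s_2(x))=\s_2(f(x))=0$, i.e.\ $\s_2(x)$ is a root of ${\hs f}$; comparing with the factorisation over $\s_1(L)$ forces $\s_2(x)=\s_1(x_j)$ for some $j$, whence $\s_2(x)\in\s_1(L)$. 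This shows $\s_2(L)\subseteq\s_1(L)$, and the reverse inclusion follows by symmetry, so $\s_1(L)=\s_2(L)$.

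With the image equality in hand, define $\tau=\s_1^{-1}\circ\s_2\colon L\to L$, where $\s_1^{-1}$ denotes the inverse of the isomorphism $\s_1\colon L\to\s_1(L)=\s_2(L)$. Then $\tau$ is a composite of field isomorphisms, hence an automorphism of $L$, and for $\lambda\in K$ we have $\tau(\lambda)=\s_1^{-1}(\s_2(\lambda))=\s_1^{-1}(\s(\lambda))=\s_1^{-1}(\s_1(\lambda))=\lambda$, so $\tau$ fixes $K$ pointwise and therefore $\tau\in\gal(L|K)$. By construction $\s_1\tau=\s_1\s_1^{-1}\s_2=\s_2$. Uniqueness is then immediate: if $\s_1\tau=\s_1\tau'$ for $\tau,\tau'\in\gal(L|K)$, injectivity of $\s_1$ yields $\tau=\tau'$.

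The main obstacle is precisely the image equality $\s_1(L)=\s_2(L)$, which is the only point where normality of $L|K$ is genuinely invoked; one must resist writing $\s_1^{-1}\s_2$ before knowing that $\s_2(L)$ actually lies inside $\s_1(L)$, since in general $\s_1$ is only an endomorphism of $L$ and need not be surjective (the base $\s$-field $K$ is not assumed inversive).
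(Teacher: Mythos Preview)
Your argument is correct and complete. It differs from the paper's in organisation rather than in the underlying idea: the paper builds $\tau$ by a Zorn's-lemma extension argument, taking a maximal pair $(M,\tau)$ with $\s_2|_M=\s_1\tau$ and showing that if $M\subsetneq L$ one can extend $\tau$ to $M(a)$ by matching $\s_2(a)$ with a suitable root of ${}^{\tau}\!f$ in $L$. You instead isolate the single global fact that makes everything work, namely $\s_1(L)=\s_2(L)$, and then define $\tau=\s_1^{-1}\s_2$ in one stroke. Both arguments appeal to normality in exactly the same way (all roots of the relevant polynomial lie in $L$, so their $\s_1$-images exhaust the roots of ${\hs f}$), but your route avoids the transfinite step entirely and makes the uniqueness clause a one-liner. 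The paper's approach has the minor advantage of never forming $\s_1^{-1}$ explicitly, but that is a cosmetic difference once the image equality is known.
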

\begin{proof}
	The uniqueness of $\tau$ follows from the injectivity of $\s_1$.
	Let $\mathcal{M}$ denote the set of all pairs $(M,\tau)$ where $M$ is an intermediate field of $L|K$ and $\tau\colon M\to L$ a morphism of field extensions of $K$ satisfying $\s_2(a)=\s_1(\tau(a))$ for all $a\in M$. By Zorn's Lemma there exists a maximal element $(M,\tau)$ in $\mathcal{M}$. Suppose that $M$ is properly contained in $L$. Choose $a\in L\smallsetminus M$ and let $f$ denote the minimal polynomial of $a$ over $M$. Let ${}^{\tau}\!f$ denote the polynomial obtained from $f$ by applying $\tau$ to the coefficients. As $L$ is Galois over $K$, we see that ${}^{\tau}\!f$ cannot have a root outside $L$. Thus ${}^{\tau}\!f$ has all its roots in $L$. Since ${}^{\s_2}\!f={}^{\s_1\tau}\!f$ we see that $\s_1$ maps the roots of ${}^{\tau}\!f$ onto the roots of ${}^{\s_2}\!f$. As $\s_2(a)\in L$ is a root of ${}^{\s_2}\!f$ this shows that there exists a root $b\in L$ of ${}^{\tau}\!f$ such that $\s_1(b)=\s_2(a)$. So we can extend $\tau$ to $\tau\colon M(a)\to L$ by setting $\tau(a)=b$; a contradiction.
\end{proof}

Let $k$ be a $\s$-field and let $k_s$ denote the separable algebraic closure of $k$. Let $\Gal$ denote the Galois group of $k_s|k$. As noted in Remark \ref{rem: extend s} there exists an extension of $\s\colon k\to k$ to an endomorphism of $k_s$. Let us fix such an extension. For $\tau\in\Gal$ the maps $\s$ and $\tau\s$ are extensions of $\s\colon k\to k$ to $k_s$. By Lemma \ref{lemma: extension of s almost unique} there exists a unique $\tau^\s\in\Gal$ such that
\begin{equation} \label{eq: operation of gal}
\tau\s=\s\tau^\s.
\end{equation}
Then
$$\s\colon \Gal\to\Gal,\ \tau\mapsto \tau^\s$$
is a morphism of groups.

If $k$ and therefore also $k_s$ is inversive, this action of $\s$ on the Galois group agrees with the action employed in \cite[Section 8.1]{Levin:difference}, where the formula $\tau^\s=\s^{-1}\tau\s$ is used as the definition. Note, however, that this formula does not make sense in our context because $\s$ need not be invertible.

The formation of the strong core is compatible with extension of the base $\s$-field:

\begin{lemma} \label{lemma: pis stable under base extension}
	Let $R$ be a $k$-$\s$-algebra and $K$ a $\s$-field extension $k$. Then
	$$\pis(R\otimes_k K|K)=\pis(R|k)\otimes_k K.$$
\end{lemma}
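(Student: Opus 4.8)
\subsection*{Proof proposal}

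The plan is to prove the two inclusions separately. The inclusion $\pis(R|k)\otimes_k K\subseteq\pis(R\otimes_k K|K)$ is the easy one: if $S\subseteq R$ is a \ssetale{} $k$-$\s$-subalgebra, then $S\otimes_k K\subseteq R\otimes_k K$ is a \ssetale{} $K$-$\s$-subalgebra by Lemma~\ref{lemma: sfinite stable under base extension}, and since $\pis(R|k)$ is the directed union of such $S$ and $K$ is flat over $k$, one has $\pis(R|k)\otimes_k K=\bigcup_S(S\otimes_k K)\subseteq\pis(R\otimes_k K|K)$. For the reverse inclusion I would first isolate two special situations: (a) the extension $K=k_s$, the separable algebraic closure of $k$ equipped with a chosen extension of $\s$ as in Remark~\ref{rem: extend s}; and (b) the base $\s$-field $k$ being separably algebraically closed. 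These combine to give the general statement: since $K_s|K$ is faithfully flat, it suffices to check the identity after $\otimes_K K_s$; applying case (a) to $R\otimes_k K$ over $K$ identifies the left-hand side base-changed to $K_s$ with $\pis(R\otimes_k K_s|K_s)$, and then writing $R\otimes_k K_s=(R\otimes_k k_s)\otimes_{k_s}K_s$ and using case (b) over $k_s$ together with case (a) for $k_s|k$ rewrites this as $\pis(R|k)\otimes_k K_s$, which is the right-hand side base-changed to $K_s$.

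For case (a) I would use Galois descent along $k_s|k$. By Lemma~\ref{lemma: pis for separably algebraically closed}, $V:=\pis(R\otimes_k k_s|k_s)$ is the $k_s$-span of the periodic idempotent elements of $A:=R\otimes_k k_s$. The Galois group $\Gal$ acts $k_s$-semilinearly on $A$ through the second factor, with $A^{\Gal}=R$, so by standard Galois descent for vector spaces it suffices to show that $V$ is $\Gal$-stable; then $V=V^{\Gal}\otimes_k k_s$, and for $w\in V^{\Gal}\subseteq R$ the equality $k\{w\}\otimes_k k_s=k_s\{w\}$ together with Lemma~\ref{lemma: sfinite stable under base extension} shows $k\{w\}$ is \ssetale{}, identifying $V^{\Gal}$ with $\pis(R|k)$ (the reverse containment being the easy inclusion). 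The crux, and the main obstacle, is the $\Gal$-stability of $V$, i.e.\ that $\tau$ carries periodic idempotents to periodic idempotents. The difficulty is that $\tau$ does \emph{not} commute with $\s$: by \eqref{eq: operation of gal} one only has the twisted relation $\tau\s=\s\tau^{\s}$, and since $k$ need not be inversive the endomorphism $\tau\mapsto\tau^{\s}$ of $\Gal$ need not be surjective, so one cannot simply move $\s$ past $\tau$. I would resolve this with the iterated identity $\tau\s^j=\s^j\tau^{\s^j}$ (a straightforward induction): if $e$ is a periodic idempotent with $\s^n e=e$, then $\tau(e)=\s^{kn}\big(\tau^{\s^{kn}}(e)\big)$ for every $k\ge 1$; as $e$ lies in $R\otimes_k k'$ for some finite Galois $k'|k$, the element $\tau^{\s^{kn}}(e)$ depends only on $\tau^{\s^{kn}}|_{k'}\in\gal(k'|k)$ and hence repeats, and comparing two such expressions for $\tau(e)$ yields $\s^{mn}(\tau(e))=\tau(e)$ for some $m\ge 1$. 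Thus $\tau(e)$ is periodic, and applying the same to $\tau^{-1}$ gives $\tau(V)=V$.

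For case (b), with $k$ separably algebraically closed, I would further reduce (again by faithful flatness along $K_s|K$ together with case (a), now for the extension $K_s|K$) to the situation where $K$ is separably algebraically closed as well. Then both $\pis(R\otimes_k K|K)$ and $\pis(R|k)\otimes_k K$ are spanned by periodic idempotents, so it suffices to show that every periodic idempotent $e$ of $R\otimes_k K$ has the form $f\otimes 1$ with $f\in R$ a periodic idempotent. Descent of the idempotent follows from Lemma~\ref{lemma: idempotents descend} combined with the fact that a purely inseparable extension induces a bijection on idempotents (to pass between $k,\overline{k}$ and $K,\overline{K}$); periodicity of $f$ then follows from $\s^n(f)\otimes 1=\s^n(e)=e=f\otimes 1$ and the injectivity of $R\to R\otimes_k K$.
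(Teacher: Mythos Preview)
Your proof is correct and follows essentially the same strategy as the paper's: the easy inclusion via Lemma~\ref{lemma: sfinite stable under base extension}, Galois descent along $k_s|k$ using the twisted relation $\tau\s=\s\tau^\s$ and finiteness of Galois orbits to show periodic idempotents are $\Gal$-stable, descent of idempotents via Lemma~\ref{lemma: idempotents descend} together with the purely inseparable trick $e=e^{p^n}$, and a faithful-flatness reduction to assemble the pieces. The only difference is organizational: the paper pivots through the \emph{algebraic} closures (its Steps~2--5), whereas you pivot through the \emph{separable} closures and invoke $\overline{k},\overline{K}$ only locally inside your case~(b) to descend idempotents; the ingredients and the logic are the same.
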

\begin{proof}
	The inclusion ``$\supset$'' is clear from Lemma \ref{lemma: sfinite stable under base extension}. We will prove the equality in several steps.
	
	\medskip
	
	Step 1: Assume $K=k_s$ is the separable algebraic closure of $k$ equipped with an extension of $\s\colon k\to k$. Let $\Gal$ denote the Galois group of $k_s|k$. As explained above there is an action of $\s$ on $\Gal$ denoted by $\tau\mapsto \tau^\s$. There also is a natural action of $\Gal$ on $R\otimes_k k_s$ via the right factor: $\tau(r\otimes\lambda)=r\otimes\tau(\lambda)$ for $\tau\in\Gal$, $r\in R$ and $\lambda\in k$.	For $f\in R\otimes_k k_s$ and $\tau\in\Gal$ we have $\tau(\s(f))=\s(\tau^\s(f))$.
	
	% 	To prove that $\pis(R\otimes_k k_s|k_s)=\pis(R|k)\otimes_k k_s$ it suffices by Lemma \ref{lemma: sfinite stable under base extension} to show that $\pis(R\otimes_k k_s|k_s)\subset R\otimes_k k_s$ descends to $k$, i.e., there exists a \ks-subalgebra $S$ of $R$ such that $S\otimes_k k_s=\pis(R\otimes_k k_s|k_s)\subset R\otimes_k k_s$.
	
	%To show that $\pis(R\otimes_k k_s|k_s)\subset R\otimes_k k_s$ descends or t to $k$, By Galois descent 
	We will show that $\pis(R\otimes_k k_s|k_s)\subset R\otimes_k k_s$ is stable under the $\Gal$-action. We know from Lemma \ref{lemma: pis for separably algebraically closed} that $\pis(R\otimes_k k_s|k_s)$ is generated as a $k_s$-vector space by the periodic idempotent elements of $R\otimes_k k_s$.
	It therefore suffices to show that $\tau(e)$ is periodic and idempotent for every periodic idempotent element $e\in R\otimes_k k_s$ and every $\tau\in\Gal$. As $\tau$ defines an automorphism of $R\otimes_k k_s$, clearly also $\tau(e)$ is idempotent. Assume $\s^n(e)=e$. For $i\in\mathbb{N}$ we have
	\begin{equation} \label{eqn: tau(e) periodic}
	\tau(e)=\tau(\s^{ni}(e))=\s^{ni}(\tau^{\s^{ni}}(e)).
	\end{equation}	
	Since every orbit of $\Gal$ on $k_s$ is finite, also every orbit of $\Gal$ on $R\otimes_k k_s$ is finite. Therefore
	$\{\tau(e), \tau^{\s^{n}}(e), \tau^{\s^{2n}}(e),\ldots\}$ is a finite set. It follows that there exist $i<j$ such that $\tau^{\s^{ni}}(e)=\tau^{\s^{nj}}(e)$. Then
	$$\tau(e)=\s^{ni}(\tau^{\s^{ni}}(e))=\s^{ni}(\tau^{\s^{nj}}(e))=\tau^{\s^{n(j-i)}}(\s^{ni}(e))=\tau^{\s^{n(j-i)}}(e).$$
	Thus, replacing $i$ with $j-i$ in equation (\ref{eqn: tau(e) periodic}), we obtain $\tau(e)=\s^{n(j-i)}(\tau(e))$. Therefore $\tau(e)$ is periodic as required.
	
	So $\pis(R\otimes_k k_s|k_s)\subset R\otimes_k k_s$ is stable under the $\Gal$-action and it follows from Galois descent (\cite[Prop. 6, Chapter V, \S 10, No. 4, A.V.62]{Bourbaki:Algebra2} or \cite[Prop. 11.1.4, p. 186]{Springer:LinearAlgebraicGroups}) that $\pis(R\otimes_k k_s|k_s)=S\otimes_k k_s$, where $S$ is the \ks-subalgebra of $R$ given by $S=R\cap\pis(R\otimes_k k_s|k_s)$. It now suffices to show that $k\{s\}$ is \ssetale{} for every $s\in S$. But a finitely $\s$-generated $k_s$-$\s$-subalgebra of $\pis(R\otimes_k k_s|k_s)$ is \ssetale{}, so $k_s\{s\}=k\{s\}\otimes_k k_s$ is \ssetale{} and it follows from Lemma \ref{lemma: sfinite stable under base extension} that $k\{s\}$ is \ssetale{}.
	
	\medskip
	
	Step 2: Assume that $k=k_s$ is separably algebraically closed and that $K=\overline{k}$ is the algebraic closure of $k$. If the characteristic of $k$ is zero, $k_s=\overline{k}$ and there is nothing to prove. So we may assume that the characteristic of $k$ is $p>0$. Then $\overline{k}$ is purely inseparable over $k$.
	By Lemma \ref{lemma: pis for separably algebraically closed} it suffices to show that every periodic idempotent element $e=\sum r_i\otimes\lambda_i\in R\otimes_k\overline{k}$ belongs to $R$. As $\overline{k}$ is purely inseparable over $k$, there exists an integer $n\geq 1$ such that $\lambda_i^{p^n}\in k$ for all $i$. So $e=e^{p^n}=\sum r_i^{p^n}\otimes\lambda_i^{p^n}$ lies in $R$.
	
	\medskip
	
	Step 3: Assume that $K=\overline{k}$ is the algebraic closure of $k$. % Kürzer nur mit sep alg closure weiter unten statt alg closure
	Then the separable algebraic closure $k_s$ of $k$ is naturally a $\s$-subfield of $\overline{k}$ (Remark \ref{rem: extend s}) and it follows from steps 2 and 1 that	
	\begin{align*}
	\pis(R\otimes_k \overline{k}|\overline{k}) & =\pis((R\otimes_k k_s)\otimes_{k_s}\overline{k}|\overline{k})=\pis(R\otimes_k k_s|k_s)\otimes_{k_s}\overline{k}=\pis(R|k)\otimes_k k_s\otimes_{k_s}\overline{k}= \\ & =\pis(R|k)\otimes_k\overline{k}.
	\end{align*}
	
	\medskip
	
	Step 4: Assume that $k$ and $K$ are algebraically closed. We know from Lemma \ref{lemma: pis for separably algebraically closed} that $\pis(R\otimes_k K|K)$ is generated as a $K$-vector space by periodic idempotent elements. By Lemma \ref{lemma: idempotents descend} every idempotent element $e$ of
	$R\otimes_k K$ is of the form $e=f\otimes 1$ for some idempotent element $f\in R$. If $e$ is periodic, then also $f$ is periodic. Therefore $\pis(R\otimes_k K|K)$ is generated as a $K$-vector space by $\pis(R|k)$ and so $\pis(R\otimes_k K|K)=\pis(R|k)\otimes_k K.$
	
	\medskip
	
	Step 5: Finally we treat the general case, i.e., $K$ is an arbitrary $\s$-field extension of $k$. Let $\overline{K}$ denote the algebraic closure of $K$ and choose an extension of $\s\colon K\to K$ to $\overline{K}$. Then the algebraic closure $\overline{k}$ of $k$ is naturally a $\s$-subfield of $\overline{K}$. As $R\otimes_k\overline{K}=(R\otimes_k\overline{k})\otimes_{\overline{k}}\overline{K}$ it follows from steps 4 and 3 that
	$$\pis(R\otimes_k\overline{K}|\overline{K})=\pis(R\otimes_k\overline{k}|\overline{k})\otimes_{\overline{k}}\overline{K}=\pis(R|k)\otimes_k\overline{k}\otimes_{\overline{k}}\overline{K}=\pis(R|k)\otimes_k K\otimes_K \overline{K}.$$
	Moreover,
	\begin{align*}
	\pis(R|k)\otimes_k K\otimes_K\overline{K} & \subset\pis(R\otimes_k K|K)\otimes_K\overline{K}\subset\pis(R\otimes_k K\otimes_K\overline{K}|\overline{K})= \\
	& =\pis(R\otimes_k\overline{K}|\overline{K}) =\pis(R|k)\otimes_k K\otimes_K \overline{K}.
	\end{align*}
	Therefore $(\pis(R)\otimes_k K)\otimes_K\overline{K}=\pis(R\otimes_k K)\otimes_K\overline{K}$ and consequently $\pis(R|k)\otimes_k K=\pis(R\otimes_k K|K)$ as desired.
\end{proof}

The following lemma shows that the formation of $\pis(R)$ is compatible with the tensor product.

\begin{lemma} \label{lemma: pis compatible with tensor product}
	Let $R$ and $S$ be $k$-$\s$-algebras. Then
	$$\pis(R\otimes_k S)=\pis(R)\otimes_k\pis(S).$$
\end{lemma}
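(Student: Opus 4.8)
The plan is to prove the two inclusions, reducing the nontrivial one to an algebraically closed base field via the base-change formula already established in Lemma~\ref{lemma: pis stable under base extension}. The inclusion $\pis(R)\otimes_k\pis(S)\subseteq\pis(R\otimes_k S)$ is immediate: any element of $\pis(R)\otimes_k\pis(S)$ lies in $A\otimes_k B$ for finitely $\s$-generated \ssetale{} subalgebras $A\subseteq R$ and $B\subseteq S$, and $A\otimes_k B$ is \ssetale{} by Lemma~\ref{lemma: sfinite algebras}(ii), hence sits inside $\pis(R\otimes_k S)$. For the reverse inclusion I would pass to $\overline k$. Applying $-\otimes_k\overline k$ and using Lemma~\ref{lemma: pis stable under base extension} twice, together with $(R\otimes_k S)\otimes_k\overline k=(R\otimes_k\overline k)\otimes_{\overline k}(S\otimes_k\overline k)$, turns the desired identity over $k$ into the same identity for the $\overline k$-$\s$-algebras $R\otimes_k\overline k$ and $S\otimes_k\overline k$. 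Since $\overline k$ is faithfully flat over $k$ and both sides are $k$-subspaces of $R\otimes_k S$, the equality descends from $\overline k$ to $k$; in fact this reduction settles both inclusions at once. So I may assume $k=\overline k$.

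Over $\overline k$, Lemma~\ref{lemma: pis for separably algebraically closed} shows that both sides are spanned by periodic idempotents (on the right, by products of periodic idempotents of $R$ and $S$), so it suffices to prove that every periodic idempotent $e\in R\otimes_k S$ lies in $\pis(R)\otimes_k\pis(S)$. The first, purely algebraic, step is to write $e$ as a $\overline k$-linear combination of products of idempotents. Here $e$ lies in $B\otimes_k C$ for finitely generated $k$-subalgebras $B\subseteq R$, $C\subseteq S$; since $\overline k$ is algebraically closed, each connected component of $\spec(B)$ is geometrically connected, so $\pi_0\big(\spec(B)\times_{\spec\overline k}\spec(C)\big)=\pi_0(\spec B)\times\pi_0(\spec C)$, and the primitive idempotents of $B\otimes_k C$ are the products $\beta_i\otimes\gamma_j$ of the finitely many primitive idempotents of $B$ and of $C$ (this generalizes Lemma~\ref{lemma: idempotents descend}). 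Collecting terms, $e=\sum_i \beta_i\otimes c_i$ with the $\beta_i$ orthogonal idempotents of $R$ and the $c_i$ idempotents of $S$; equivalently, $e$ has canonical finite-dimensional coordinate subspaces $N\subseteq R$ and $M\subseteq S$, each spanned by idempotents, with $e\in N\otimes_k M$.

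The core of the argument is to upgrade idempotency to periodicity of the coordinate idempotents. I would use that $\s^n e=e$ for some $n\geq 1$ and that $N$ and $M$ are determined by $e$, being the images of $e$ under the partial dual maps. Writing $M=\overline k\text{-span}\{c_i\}$ with the $c_i$ a basis of idempotents and comparing $e$ with $\s^n e=\sum_i \s^n(\beta_i)\otimes\s^n(c_i)$, minimality forces $M\subseteq\overline k\text{-span}\{\s^n(c_i)\}$, and a dimension count then shows the $\s^n(c_i)$ are again a basis of $M$; likewise for $N$. Consequently the finite-dimensional étale subalgebras $E_S=k[M+\s M+\cdots+\s^{n-1}M]\subseteq S$ and the corresponding $E_R\subseteq R$ are $\s$-stable, and $\s$ restricts on each to a ring endomorphism that permutes the finitely many primitive idempotents. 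As such a permutation has finite order, every idempotent of $E_R$ and $E_S$—in particular each $\beta_i$ and $c_i$—is periodic. Therefore $e=\sum_i\beta_i\otimes c_i\in\pis(R)\otimes_k\pis(S)$, and combined with the trivial inclusion and the descent step this yields the equality; Lemma~\ref{lemma: structure of sfinite ksalgebras} may be invoked to streamline the last step once $E_R,E_S$ are recognized as \ssetale{}.

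I expect the delicate point to be precisely this periodicity transfer. Because $\s$ is only semilinear over $k$ and need not be surjective on $k$, the set $\s M$ is in general not a $k$-subspace, so some care is required to show that $\s^n$ genuinely stabilizes the canonical coordinate space $M$ (rather than merely mapping it into a larger span) and that $\s$ acts \emph{bijectively} on the primitive idempotents of $E_S$. The concrete danger is that a non-periodic idempotent of $S$ could appear as a coordinate of $e$; the content of the argument is that periodicity of $e$ together with the canonicity and finite-dimensionality of the coordinate spaces rules this out. Once the bijectivity on primitive idempotents is secured, the remainder is formal.
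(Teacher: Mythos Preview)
Your approach is essentially the same as the paper's: reduce to a large base field, show that every periodic idempotent of $R\otimes_k S$ has its minimal coordinate subspaces sitting inside \ssetale{} subalgebras, and conclude. The paper also works one factor at a time (showing $e\in\pis(R)\otimes_k S$ and symmetrically $e\in R\otimes_k\pis(S)$, then intersecting) rather than both at once, but that is a cosmetic difference.

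The substantive difference is that the paper reduces not only to $k$ algebraically closed but also to $k$ \emph{inversive}; Lemma~\ref{lemma: pis stable under base extension} allows passage to any $\s$-field extension, so one may take such a $k$. This extra hypothesis is exactly what closes the gap you yourself flag. Over an inversive $k$, from ``the $\s^n(c_i)$ are again a $k$-basis of $M$'' one gets $\s^n(M)=M$ on the nose (not merely $k\cdot\s^n(M)=M$), whence $\s$ is a genuine bijection on $W=M+\s(M)+\cdots+\s^{n-1}(M)$ and therefore on $k[W]=E_S$. That bijectivity is precisely what makes $E_S$ $\s$-separable and hence \ssetale{}, after which Lemma~\ref{lemma: structure of sfinite ksalgebras} gives periodicity of the idempotents.

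Without inversivity, your assertion that ``$\s$ restricts to a ring endomorphism that permutes the finitely many primitive idempotents'' is circular as written: for an \'{e}tale $k$-$\s$-algebra this permutation property is equivalent to $\s$-separability (Lemma~\ref{lemma: structure of sfinite ksalgebras} gives one direction; the failure of the other is illustrated by maps like $(a,b)\mapsto(\s(a),\s(a))$ on $k\times k$), and $\s$-separability of $E_S$ is exactly what remains to be proved. The gap can in fact be closed over merely algebraically closed $k$: from $k\cdot\s^n(M)=M$ one checks $k\cdot\s(W)=W$, so the canonical map ${\hs E_S}\to E_S$ is surjective, hence bijective by a dimension count, and Proposition~\ref{prop: sseparable} yields $\s$-separability. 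But you do not supply this argument, and the shortcut of assuming $k$ inversive, as the paper does, is cleaner.
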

\begin{proof}
	The inclusion ``$\supset$'' follows from the fact that the tensor product of two \ssetale{} $k$-$\s$-algebras is \ssetale{} (Lemma \ref{lemma: sfinite algebras}).
	
	To prove the inclusion ``$\subset$'' we first reduce to the case that $k$ is algebraically closed and inversive. Let $K$ be an algebraically closed and inversive $\s$-field extension of $k$. For example, $K$ could be the algebraic closure of the inversive closure of $k$.
	We assume that the lemma is true over $K$.
	So $$\pis((R\otimes_k K)\otimes_K (S\otimes_k K)|K)=\pis(R\otimes_k K|K)\otimes_K\pis(S\otimes_k K|K).$$ As $(R\otimes_k K)\otimes_K (S\otimes_k K)=(R\otimes_k S)\otimes_k K$, we see that the left hand side equals $\pis(R\otimes_k S)\otimes_k K$ by Lemma \ref{lemma: pis stable under base extension}. Similarly, also by Lemma \ref{lemma: pis stable under base extension}, the right hand side agrees with $(\pis(R)\otimes_k \pis(S))\otimes_k K$. It follows that $\pis(R\otimes_k S)=\pis(R)\otimes_k\pis(S)$. We can therefore assume that $k$ is algebraically closed and inversive.
	
	 We know from Lemma \ref{lemma: pis for separably algebraically closed} that $\pis(R\otimes_k S)$ is generated as a $k$-vector space by periodic idempotent elements. So let $e\in R\otimes_k S$ be a periodic idempotent element. It suffices to show that $e\in\pis(R)\otimes_k \pis(S)$.
	
	For a $k$-algebra $A$, let $\pi_0(A)$ denote the union of all \'{e}tale $k$-subalgebras of $A$. It follows from \cite[Section 6.5]{Waterhouse:IntroductiontoAffineGroupSchemes} that $\pi_0(A\otimes_k B)=\pi_0(A)\otimes_k \pi_0(B)$ for $k$-algebras $A$ and $B$.
	
	So $e\in\pi_0(R\otimes_k S)=\pi_0(R)\otimes_k \pi_0(S)$. Thus there exist \'{e}tale $k$-subalgebras $R_1$ and $S_1$ of $R$ and $S$ respectively such that $e\in R_1\otimes_k S_1$.
	% and $\pi_0(R_1)$ and $\pi_0(S_1)$ are \'{e}tale $k$-algebras. So we may assume that $R_1$ and $S_1$ are \'{e}tale
	%There exist finitely generated $k$-subalgebras $R_1\subset R$ and $S_1\subset S$ such that $e\in R_1\otimes_k S_1$. 
	% 	By \cite[Theorem 6.5, p. 50]{Waterhouse:IntroductiontoAffineGroupSchemes} we have $e\in\pi_0(R_1\otimes_k S_1)=\pi_0(R_1)\otimes_k \pi_0(S_2)$ and $\pi_0(R_1)$ and $\pi_0(S_1)$ are \'{e}tale $k$-algebras. So we may assume that $R_1$ and $S_1$ are \'{e}tale $k$-algebras.
	
	If $V_i$ is a $k$-subspace of $R$ such that $e\in V_i\otimes_k S$, then $e\in\cap_{i}(V_i\otimes_k S)=(\cap_{i}V_i)\otimes_k S$. Thus there exists a smallest $k$-subspace $V$ of $R$ such that $e\in V\otimes_k S$. Let $v_1,\ldots,v_n$ be a $k$-basis of $V$ and write $e=\sum v_i\otimes s_i\in V\otimes_k S$. The idempotent element $e$ is periodic, say $\s^m(e)=e$. So $e=\sum \s^m(v_i)\otimes\s^m(s_i)$. By construction of $V$, the $k$-subspace of $R$ generated by $\s^m(v_1),\ldots,\s^m(v_n)$ contains $V$. Since $V$ has dimension $n$, this shows that the $\s^m(v_i)$ are a $k$-basis of $V$. As $k$ is inversive, we see that $\s^m(V)=V$. Let $W=V+\s(V)+\cdots+\s^{m-1}(V)$. Then $W$ is a $k$-subspace of $R$ and $\s(W)=W$. Indeed, for dimension reasons, $\s$ induces a bijection on $W$.
	
	As $V$ is contained in $R_1$, every element $v$ of $V$ satisfies a separable polynomial over $k$. Thus also $\s(v)$ satisfies a separable polynomial over $k$. (Cf. the proof of Remark \ref{rem: extend s}). It follows that all the elements of $W$ satisfy separable polynomials over $k$.
	
	If $f$ is a separable polynomial over $k$, then $k[x]/(f)$ is an \'{e}tale algebra. Since tensor products and quotients of \'{e}tale algebras are \'{e}tale algebras (\cite[Corollary 6.2]{Waterhouse:IntroductiontoAffineGroupSchemes}), it follows that any $k$-algebra that can be generated by finitely many elements satisfying separable polynomials over $k$, is \'{e}tale. Therefore $k\{W\}=k[W]\subset R$ is an \'{e}tale $k$-algebra.

	 Because $k$ is inversive and $\s$ induces a bijection of $W$, we see that $\s$ is a bijection on $k\{W\}=k[W]$. Thus $k\{W\}$ is a \ssetale{} \ks-algebra. Therefore $e\in k\{W\}\otimes_k S\subset \pis(R)\otimes_k S$.
	
	A similar argument shows that $e\in R\otimes_k \pis(S)$ and it follows (using \cite[Prop. 7, Chapter I, \S 2.6, p. 18]{Bourbaki:commutativealgebra}) that
	$$e\in (\pis(R)\otimes_k S)\cap (R\otimes_k \pis(S))=\pis(R)\otimes_k\pis(S).$$

\end{proof}

\begin{cor}\label{monoidal}
The assignment $R\rightsquigarrow\pis(R)$ gives rise to a monoidal functor on the category of \ks-algebras. 
\end{cor}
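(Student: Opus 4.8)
The plan is to verify that the assignment $R\rightsquigarrow\pis(R)$ satisfies the axioms of a monoidal functor, drawing on the structural results already established. First I would confirm functoriality: given a morphism $\psi\colon R\to S$ of \ks-algebras, I must check that $\psi(\pis(R))\subset\pis(S)$, so that $\psi$ restricts to a morphism $\pis(\psi)\colon\pis(R)\to\pis(S)$. This follows because for any \ssetale{} \ks-subalgebra $A\subset R$, its image $\psi(A)$ is a quotient of $A$, hence \ssetale{} by Lemma \ref{lemma: sfinite algebras}(iii), and therefore lands inside $\pis(S)$ by definition of the strong core. Respecting identities and composition is then immediate, so $\pis$ is a functor $\ks\text{-Alg}\to\ks\text{-Alg}$.

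Next I would exhibit the monoidal structure. The category of \ks-algebras is symmetric monoidal under $\otimes_k$ with unit $k$, and the key compatibility is precisely Lemma \ref{lemma: pis compatible with tensor product}, which provides a natural isomorphism
\begin{equation*}
\pis(R\otimes_k S)\cong\pis(R)\otimes_k\pis(S).
\end{equation*}
I would take this as the structural (laxator) morphism $\mu_{R,S}$ of the monoidal functor and note that, being an equality of \ks-subalgebras of $R\otimes_k S$ realized through the canonical inclusions, it is automatically natural in $R$ and $S$. For the unit, since $k$ itself is trivially \ssetale{} over $k$, we have $\pis(k)=k$, furnishing the unit isomorphism $\varepsilon\colon k\xrightarrow{\sim}\pis(k)$.

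Finally I would check the coherence axioms. Because the isomorphisms $\mu_{R,S}$ and $\varepsilon$ are not abstract but arise from genuine inclusions of subalgebras inside the ambient tensor products, the associativity pentagon and the two unit triangles commute for the simple reason that the associator and unitors of $\otimes_k$ already do, and restricting these canonical isomorphisms to the respective strong cores preserves the commuting diagrams. Concretely, both routes around the pentagon for $\pis(R\otimes_k S\otimes_k T)$ identify with the same subalgebra $\pis(R)\otimes_k\pis(S)\otimes_k\pis(T)$ under the ambient associator, and similarly for the unit triangles using $\pis(k)=k$. I expect the main (though still mild) obstacle to be the bookkeeping of naturality and coherence: one must be careful that Lemma \ref{lemma: pis compatible with tensor product} yields a genuinely natural family compatible with the symmetric monoidal structure, rather than merely an isomorphism for each fixed pair $(R,S)$. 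Since that lemma is proved by identifying both sides with the same subobject of $R\otimes_k S$, naturality and coherence are inherited from the underlying monoidal category, and no further difference-algebraic input is required.
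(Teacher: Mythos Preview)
Your proposal is correct and follows essentially the same approach as the paper: functoriality via Lemma \ref{lemma: sfinite algebras}(iii) and monoidal compatibility via Lemma \ref{lemma: pis compatible with tensor product}. The paper's proof is in fact just these two sentences, whereas you additionally spell out the unit isomorphism $\pis(k)=k$ and the coherence axioms; this extra care is harmless and arguably makes the argument more complete.
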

\begin{proof}
If $\psi\colon R\to S$ is a morphism of \ks-algebras, it follows from Lemma \ref{lemma: sfinite algebras} that $\psi(\pis(R))\subset\pis(S)$.
The compatibility with the tensor product is guaranteed by Lemma \ref{lemma: pis compatible with tensor product}.
\end{proof}

%\subsection{$\pis(L|K)$ versus $\core(L|K)$}

\subsection{Strong core versus core}

Note that $\pis(L|K)$ is a $\s$-field for an extension of $\s$-fields $L|K$. Our first goal is to compare $\pis(L|K)$ to the core of $L|K$.
 Recall (\cite[Def. 4.3.17]{Levin:difference}) that the core
$$\core(L|K)$$ of an extension of $\s$-fields $L|K$ is the union of all intermediate $\s$-fields of $L|K$, whose underlying field is a finite separable field extension of $K$. The core $\core(L|K)$ is a $\s$-subfield of $L$ that plays a fairly prominent role in classical difference algebra, for example in the study of compatibility of $\s$-field extensions (\cite[Theorem 5.4.22]{Levin:difference}) or in questions related to extending specializations (\cite[Theorem XI, Chapter 7]{Cohn:difference}).

\begin{rem} \label{rem: strong core vs core}
Let $L|K$ be a $\s$-field extension. 
\begin{enumerate}
\item We have that $\pis(L|K)\subset\core(L|K)$.
\item If $L$ is finitely $\s$-generated over $K$, $\pis(L|K)$ is a finite separable field extension of $K$.
\item We have that $\pis(L|K)=\core(L|K)$ if and only if $\core(L|K)$ is $\s$-separable over $K$.
\item In particular, $\pis(L|K)=\core(L|K)$ if $K$ is inversive (Corollary \ref{cor: sfield extension of inversive is sseparable}).
\end{enumerate}
\end{rem}

%, since a $\s$-field extension of an inversive $\s$-field is $\s$-separable by Proposition \ref{prop: sseparable}.

\begin{ex}\label{mahler-example}
We will compute the core and the strong core for the $\s$-field extension $\bar{K}|K$, where $K=\bar{\mathbb{Q}}(x)$ with the Mahler difference operator $\sigma(f(x))=f(x^d)$ for some integer $d\geq 2$. As proved in \cite[Proposition~15]{julien-mahler}, the only finite difference field extensions of $K$ are the fields $K(x^{1/n})$, for $n\geq 1$. 

We claim that $K(x^{1/n})$ is $\sigma$-separable over $K$ if and only if $n$ is coprime to $d$. Indeed, note that $\sigma$ transforms the basis $1, x^{1/n},\ldots,x^{(n-1)/n}$ into $1, x^{d/n},...,x^{d(n-1)/n}$. 
If $n$ is coprime to $d$, then $d$ is invertible modulo $n$, and the second sequence is, up to a multiplication by some integer powers of $x$, the same as the first, and thus it is again a basis.
Conversely, suppose that $n$ and $d$ have a common divisor $l>1$, and write $n=ln'$, $d=ld'$. Then $x^{n'd/n}=x^{d'}$, so it is linearly dependent with $1$ over $K$, and the second sequence is not a basis.

We conclude that 
$$
\core(\bar{K}|K)=\bigcup_{n\geq 1}K(x^{1/n}), \ \ \ \ \text{and} \ \ \ \ \ \pis(\bar{K}|K)=\bigcup_{(n,d)=1}K(x^{1/n}).
$$
\end{ex}

Note that in the above example, for every $a\in\core(\bar{K}|K)$, there exists an $m\geq 0$ such that $\s^m(a)\in \pis(\bar{K}|K)$. We will show that indeed for any $\s$-field extension $L|K$ and $a\in\core(L|K)$, there exists an $m\in\N$ such that $\s^m(a)\in\pis(L|K)$. Following \cite[Def. 4.30]{Hrushovski:elementarytheoryoffrobenius} we make the following definition.

\begin{defi}
	An extension of $\s$-fields $L|K$ is called \emph{$\s$-radicial} \index{$\s$-radicial} if for every $a\in L$ there exists an $n\in \N$ such that $\s^n(a)\in K$.
\end{defi}

For example, the inversive closure $K^*$ (\cite[Def. 2.1.6]{Levin:difference}) of a $\s$-field $K$ is a $\s$-radicial extension of $K$. Moreover, every $\s$-radicial extension of $K$ is contained in $K^*$. For later reference we note the following obvious lemma.

\begin{lemma} \label{lemma: sradicial transitive}
	Let $L|K$ and $M|L$ be $\s$-radicial $\s$-field extensions. Then $M|K$ is also $\s$-radicial. \qed
\end{lemma}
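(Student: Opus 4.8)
The plan is to chase the definition directly, applying the two $\s$-radicial hypotheses in turn. Fix an arbitrary element $a\in M$. First I would use that $M|L$ is $\s$-radicial to produce an integer $m\in\N$ with $\s^m(a)\in L$. Writing $b=\s^m(a)\in L$, I would then invoke that $L|K$ is $\s$-radicial to obtain an integer $n\in\N$ such that $\s^n(b)\in K$.

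The key step is the observation that $\s^n(b)=\s^n(\s^m(a))=\s^{n+m}(a)$, where the second equality holds because $\s$ is a ring endomorphism and iterates of an endomorphism compose additively. Hence $\s^{n+m}(a)\in K$ with $n+m\in\N$, which is precisely the condition for $M|K$ to be $\s$-radicial. Since $a\in M$ was arbitrary, this completes the argument.

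There is no genuine obstacle here: the statement is a purely formal consequence of the definition together with the identity $\s^n\circ\s^m=\s^{n+m}$. The only subtlety worth flagging is that this argument never requires $\s$ to be injective, so the conclusion remains valid in the present setting where $\s\colon R\to R$ is permitted to be an arbitrary endomorphism rather than an automorphism.
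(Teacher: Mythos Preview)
Your argument is correct and is exactly the obvious direct verification the paper has in mind; indeed, the paper states this lemma without proof (note the \qed\ immediately after the statement and the preceding remark that it is an ``obvious lemma''). There is nothing to add.
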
 
%\begin{proof}
%	Obvious.
%\end{proof}

\begin{lemma} \label{lemma: finite implies sradicial over pis}
	Let $L|K$ be a finite separable extension of $\s$-fields. Then $L$ is $\s$-radicial over $\pis(L|K)$.
\end{lemma}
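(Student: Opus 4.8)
The plan is to move to an inversive base by base change to the inversive closure $K^*$ of $K$, and there to locate a \ssetale{} subalgebra of $L\otimes_K K^*$ that swallows a sufficiently high $\s$-power of every element; descending this back to $L$ yields the claim.

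Concretely, I would first put $A:=L\otimes_K K^*$. Since $L|K$ is finite separable, $L$ is an \'etale $K$-algebra, hence $A$ is a finite-dimensional \'etale $K^*$-algebra. As $K^*$ is inversive, $\s$ acts on $A$ as a $\s_{K^*}$-semilinear ring endomorphism, and each image $\s^n(A)$ is a $K^*$-subspace of $A$: for $\lambda\in K^*$ one has $\lambda\s^n(x)=\s^n(\s_{K^*}^{-n}(\lambda)x)$. Since $\s^{n+1}(A)=\s^n(\s(A))\subseteq\s^n(A)$, the subspaces $\s^n(A)$ form a descending chain in the finite-dimensional space $A$, so they stabilise: there is an $N$ with $A_\infty:=\s^N(A)=\s^{N+1}(A)$.

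The heart of the argument is then to identify $A_\infty$ as a \ssetale{} $K^*$-$\s$-subalgebra. As the image of the ring endomorphism $\s^N$ it is a subring of $A$; it contains $K^*$ because $\s_{K^*}^N$ is surjective; and it is $\s$-stable, so it is a $K^*$-$\s$-subalgebra of $A$. A subalgebra of an \'etale algebra is \'etale, so $A_\infty$ is \'etale over $K^*$. Moreover $\s$ restricts to a surjective $\s_{K^*}$-semilinear endomorphism of the finite-dimensional space $A_\infty$, and such a map is automatically bijective (it sends a basis to a spanning, hence basis, family); thus $A_\infty$ is inversive. By the remark following Definition~\ref{defi: strongly setale}, an \'etale inversive algebra over the inversive field $K^*$ is \ssetale{}, so $A_\infty\subseteq\pis(A|K^*)$. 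Finally, Lemma~\ref{lemma: pis stable under base extension} gives $\pis(A|K^*)=\pis(L|K)\otimes_K K^*$.

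To conclude I would descend: for $a\in L$ we have $\s^N(a)\otimes 1=\s^N(a\otimes 1)\in A_\infty\subseteq\pis(L|K)\otimes_K K^*$, and since $K^*|K$ is a field extension the intersection $(\pis(L|K)\otimes_K K^*)\cap(L\otimes 1)$ equals $\pis(L|K)$, so $\s^N(a)\in\pis(L|K)$. As $N$ is independent of $a$, this in fact proves the stronger statement $\s^N(L)\subseteq\pis(L|K)$. I expect the main obstacle to be the middle step: recognising that one must pass to $K^*$ (so that the images $\s^n(A)$ become $K^*$-subspaces and the chain can stabilise) and then checking that the stable piece $A_\infty$ is simultaneously a subring, \'etale and inversive, which is exactly what makes it \ssetale{}; the base change and the final descent are routine given Lemma~\ref{lemma: pis stable under base extension}.
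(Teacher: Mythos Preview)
Your proof is correct, and the underlying idea --- stabilise the descending chain of $\s$-powers and recognise the stable piece as \ssetale{} --- coincides with the paper's. The paper, however, carries it out more directly, without passing to $K^*$ or invoking Lemma~\ref{lemma: pis stable under base extension}: it works inside $L$ with the chain of intermediate $\s$-fields $K\s^i(L)$, and once this stabilises at $K\s^n(L)$ the twist map ${\hs(K\s^n(L))}\to K\s^n(L)$ is surjective by construction (its image is $K\s^{n+1}(L)=K\s^n(L)$), hence bijective by a dimension count, so $K\s^n(L)$ is $\s$-separable over $K$ by Proposition~\ref{prop: sseparable}(v) and therefore contained in $\pis(L|K)$. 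Your detour through $K^*$ buys the conceptual simplification that over an inversive base ``$\s$-separable'' collapses to ``inversive'', but the price is relying on Lemma~\ref{lemma: pis stable under base extension}, whose proof is considerably longer than the present lemma. The two arguments are in fact tracking the same object: your $A_\infty=\s^N(L\otimes_K K^*)$ equals $K\s^N(L)\otimes_K K^*$, so the paper's approach is just the descended version of yours, obtained without ever leaving $K$.
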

\begin{proof}
	The sequence of intermediate $\s$-fields $K\s^i(L)$ of $L|K$ is decreasing and therefore must stabilize, since $L|K$ is finite. So there exists an $n\in\N$ such that $K\s^{n+1}(L)=K\s^n(L)$.
	The canonical map
	\begin{align*}{\hs(K\s^n(L))}=K\s^n(L)\otimes_K K &\longrightarrow K\s^{n+1}(L)=K\s^n(L) \\
	a\otimes\lambda &\longmapsto \s(a)\lambda
	\end{align*}
	is surjective and $K$-linear. Thus it is injective and it follows from Proposition \ref{prop: sseparable} that $K\s^n(L)$ is $\s$-separable over $K$. Because $L$ is separable over $K$, also $K\s^n(L)$ is separable over $K$ and we find that $K\s^n(L)\subset\pis(L|K)$. Clearly $L$ is $\s$-radicial over $K\s^n(L)$. Thus $L$ is $\s$-radicial over $\pis(L|K)$.	
\end{proof}

\begin{cor} \label{cor: pis sradicial over core}
	Let $L|K$ be an extension of $\s$-fields. Then $\core(L|K)$ is $\s$-radicial over $\pis(L|K)$.
\end{cor}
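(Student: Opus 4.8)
The plan is to reduce the assertion to the finite separable case that was already settled in Lemma~\ref{lemma: finite implies sradicial over pis}, and then patch the local conclusion into the union defining the core. First I would unwind what needs to be shown: since $\pis(L|K)\subset\core(L|K)$ and both are $\s$-fields, the statement ``$\core(L|K)$ is $\s$-radicial over $\pis(L|K)$'' means precisely that for every $a\in\core(L|K)$ there exists $n\in\N$ with $\s^n(a)\in\pis(L|K)$.

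So I would fix an arbitrary $a\in\core(L|K)$. By the definition of the core as the union of all intermediate $\s$-fields of $L|K$ whose underlying field is finite separable over $K$, the element $a$ lies in some such intermediate $\s$-field $M$, i.e.\ $M|K$ is a finite separable extension of $\s$-fields and $a\in M$. Applying Lemma~\ref{lemma: finite implies sradicial over pis} to the extension $M|K$ shows that $M$ is $\s$-radicial over $\pis(M|K)$, and hence there is an integer $n\in\N$ with $\s^n(a)\in\pis(M|K)$.

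The only remaining point is to pass from $\pis(M|K)$ to $\pis(L|K)$, and this is the monotonicity of the strong core under $\s$-subfields. Since $M$ is a $\s$-subfield of $L$, any \ssetale{} $K$-$\s$-subalgebra of $M$ is in particular a \ssetale{} $K$-$\s$-subalgebra of $L$; taking the union over all such subalgebras (Definition~\ref{defi: pis}) yields $\pis(M|K)\subset\pis(L|K)$. Therefore $\s^n(a)\in\pis(L|K)$, and as $a\in\core(L|K)$ was arbitrary, this is exactly the desired $\s$-radiciality.

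I do not expect a genuine obstacle here, as the corollary is essentially a gluing of the finite statement of Lemma~\ref{lemma: finite implies sradicial over pis} along the directed union that defines $\core(L|K)$. The only things requiring care are the bookkeeping of the definition of $\s$-radicial (that the exponent $n$ may depend on $a$, which is harmless) and the elementary monotonicity $\pis(M|K)\subset\pis(L|K)$, both of which are immediate from the definitions.
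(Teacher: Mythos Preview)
Your proof is correct and follows essentially the same approach as the paper: the paper fixes $a\in\core(L|K)$, observes that $K\langle a\rangle$ is a finite separable $\s$-field extension of $K$, applies Lemma~\ref{lemma: finite implies sradicial over pis} to obtain $\s^n(a)\in\pis(K\langle a\rangle|K)$, and then uses the inclusion $\pis(K\langle a\rangle|K)\subset\pis(L|K)$. The only cosmetic difference is that you work with an arbitrary finite separable intermediate $\s$-field $M$ containing $a$ rather than the specific choice $M=K\langle a\rangle$.
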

\begin{proof}
	Let $a\in\core(L|K)$. Then $K\l a\r$ is a finite separable $\s$-field extension of $K$. It follows from Lemma \ref{lemma: finite implies sradicial over pis} that there exists an $n\in\N$ such that $\s^n(a)\in\pis(K\l a\r|K)\subset\pis(L|K)$.
\end{proof}

The following lemma will be needed in the proof of our enhanced version of Babbitt's decompositon (Theorem \ref{theo: Babbitt}). 

\begin{lemma} \label{lemma: pispis equals pis}
	Let $L|K$ be an extension of $\s$-fields. Then
	$$\pis(L|\pis(L|K))=\pis(L|K).$$
\end{lemma}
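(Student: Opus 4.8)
The plan is to prove the two inclusions of $\pis(L|\pis(L|K))=\pis(L|K)$ separately. Write $C=\pis(L|K)$; since $L$ is a field and $C$ is a $\s$-separable $k$-$\s$-subalgebra (by the remark following Definition~\ref{defi:  pis}), $C$ is a $\s$-subfield of $L$. For the inclusion $C\subseteq\pis(L|C)$ I would simply observe that $C$, viewed as a $C$-$\s$-subalgebra of $L$, is trivially \'etale and $\s$-separable over itself, hence \ssetale{} over $C$, so $C\subseteq\pis(L|C)$.

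The substance is the reverse inclusion $\pis(L|C)\subseteq C$. Take $a\in\pis(L|C)$. Then $C\{a\}$ is a $C$-$\s$-subalgebra of a \ssetale{} $C$-$\s$-algebra, hence \ssetale{} over $C$ by Lemma~\ref{lemma: sfinite algebras}(i); being a finite-dimensional domain over the field $C$ it is a field, so $E:=C\langle a\rangle=C\{a\}$ is a finite separable, $\s$-separable $\s$-field extension of $C$. The strategy is to descend $E$ to a finite \ssetale{} $\s$-subfield of $C$ over $K$. Writing $C=\bigcup_j C_j$ as the directed union of its finite \ssetale{} $\s$-subfields over $K$ (a directed system, since each such subalgebra of $L$ is a field and the compositum of two of them is again \ssetale{} by the remark following Definition~\ref{defi:  pis} together with Lemma~\ref{lemma: sfinite algebras}), and setting $E_j:=C_j\langle a\rangle$, one checks easily that $E=\bigcup_j E_j$. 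Once I can show that $E_j$ is \ssetale{} over $C_j$ for some $j$, transitivity (Lemma~\ref{lemma: transitivity of ssetale}) applied to the tower $K\subseteq C_j\subseteq E_j$ gives that $E_j$ is \ssetale{} over $K$, whence $a\in E_j\subseteq\pis(L|K)=C$, as desired.

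So everything reduces to producing a suitable $C_j$ with $E_j$ \ssetale{} over $C_j$, and the only delicate part is $\s$-separability. First I would fix $N$ with $E=C(a,\s(a),\ldots,\s^N(a))$ and $\s^{N+1}(a)\in C(a,\ldots,\s^N(a))$, and enlarge $j$ so that $C_j$ contains the finitely many coefficients expressing $\s^{N+1}(a)$ in terms of $a,\ldots,\s^N(a)$; since $C_j$ is $\s$-stable, this forces $E_j=C_j(a,\ldots,\s^N(a))$ to be finite over $C_j$. For such $j$ the degrees $[E_j:C_j]$ are finite and non-increasing (because $C_{j'}\supseteq C_j$ forces $E_{j'}=C_{j'}E_j$, so $[E_{j'}:C_{j'}]\leq[E_j:C_j]$), hence stabilize; a short argument comparing a stabilized $C_j$-basis of $E_j$ against a $C$-basis of $E$ identifies the stable value with $[E:C]$. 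Then for large $j$ the surjection $E_j\otimes_{C_j}C\twoheadrightarrow E$ is a map of $C$-vector spaces of equal finite dimension, hence an isomorphism, i.e.\ $E_j$ and $C$ are linearly disjoint over $C_j$ inside $E$.

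The étaleness of $E_j$ over $C_j$ is then automatic ($E_j/C_j$ is finite, and separable because $E/C$ and $C/C_j$ are separable algebraic and $E_j\subseteq E$). For $\s$-separability I would invoke criterion (iv) of Proposition~\ref{prop: sseparable}: if $f_1,\ldots,f_n\in E_j$ are $C_j$-linearly independent, then linear disjointness makes them $C$-linearly independent in $E$, so $\s(f_1),\ldots,\s(f_n)$ are $C$-linearly independent since $E$ is $\s$-separable over $C$, and hence $C_j$-linearly independent in $E_j$. Thus $E_j$ is \ssetale{} over $C_j$ and the proof concludes as above. I expect the degree-stabilization step producing linear disjointness to be the main obstacle; I considered instead deducing $\s$-separability of $E/C_j$ from a transitivity principle (via criterion (vii) of Proposition~\ref{prop: sseparable}, reducing to $C/C_j$ being $\s$-separable), but establishing $C/C_j$ $\s$-separable appears to require comparable \'etale-descent bookkeeping, so the linear-disjointness route seems cleaner.
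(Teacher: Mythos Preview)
Your proof is correct and follows the same strategy as the paper's: descend $a$ to a finite \ssetale{} subextension over $K$ and then invoke transitivity (Lemma~\ref{lemma: transitivity of ssetale}). The paper's execution is shorter: using the primitive element theorem to write $C\langle a\rangle=C(a)$ with $\s(a)=g(a)$ for some $g\in C[x]$, it takes $N'$ to be the $K$-$\s$-subfield $\s$-generated by the coefficients of $g$ and of the minimal polynomial $f$ of $a$ over $C$, so that $f$ remains the minimal polynomial of $a$ over $N'$ (your linear disjointness for free) and $\s$-separability of $N'(a)|N'$ reduces to the $C$-linear (hence $N'$-linear) independence of $1,\s(a),\ldots,\s(a^{n-1})$, bypassing the directed-limit and degree-stabilization argument.
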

\begin{proof}
	We abbreviate $N=\pis(L|K)$. Let $M\subset L$ be a \ssetale{} $N$-$\s$-algebra. We have to show that $M\subset N$. By the primitive element theorem, there exists $a\in M$ such that $M=N(a)$. It suffices to show that $a\in N=\pis(L|K)$. Let $f$ denote the minimal polynomial of $a$ over $N$ and let $g$ denote a polynomial over $N$ such that $\s(a)=g(a)$. Let $N'$ be the $\s$-field extension $\s$-generated over $K$ by the coefficients of $f$ and $g$. Then $N'$ is \ssetale{} over $K$, $N'(a)$ is a $\s$-field and $f$ is also the minimal polynomial of $a$ over $N'$. We claim that $N'(a)$ is \ssetale{} over $N'$. Clearly $N'(a)$ is \'{e}tale over $N'$. So it suffices to show that
	${\hs (N'(a))}\to N'(a)$ is injective.
	Assume that $f$ has degree $n$. Then it suffices to show that $1,\s(a),\ldots,\s(a^{n-1})\in L$ are $N'$-linearly independent. But this is clearly the case, indeed $1,\s(a),\ldots,\s(a^{n-1})$ are $N$-linearly independent since $N(a)$ is $\s$-separable over $N$.
	
	So $N'(a)$ is \ssetale{} over $N'$ and $N'$ is \ssetale{} over $K$. It follows from Lemma \ref{lemma: transitivity of ssetale} that $N'(a)$ is \ssetale{} over $K$. So $a\in N'(a)\subset\pis(L|K)$ as claimed.
\end{proof}

We conclude this section with a result on $\pis(L|K)$ for $\s$-field extensions $L|K$ that will be needed in the proof of our enhanced version of Babbitt's decomposition theorem (Theorem \ref{theo: Babbitt}). This result also illustrates the importance of the $\s$-separability assumption. Example \ref{ex: core not galois} shows that $\pis(L|K)$ is better behaved than $\core(L|K)$.

\begin{prop} \label{prop: pis Galois}
	Let $L|K$ be an extension of $\s$-fields such that $L$ is Galois over $K$ and let $M$ be an intermediate $\s$-field of $L|K$ that is Galois over $K$. Then $\pis(L|M)$ is Galois over $K$.
\end{prop}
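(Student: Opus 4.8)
The plan is to verify the two defining properties of a Galois extension for $P:=\pis(L|M)$ separately: separability over $K$, which is easy, and normality over $K$, which carries essentially all of the difficulty. First I would dispose of separability. Each \ssetale{} $M$-$\s$-subalgebra of $L$ is \'etale over $M$ and, being a reduced finite-dimensional subring of the field $L$, is a finite separable field extension of $M$; hence $P$ is a directed union of such, i.e. $P|M$ is separable algebraic. Since $M|K$ is Galois, hence separable, transitivity of separability gives that $P|K$ is separable. Because $P\subset L$ and $L|K$ is normal (being Galois), $P|K$ is normal if and only if $P$ is stable under $G:=\gal(L|K)$: every $K$-embedding of $P$ into an algebraic closure extends to an automorphism of $L$, i.e. to an element of $G$. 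Thus everything reduces to showing $\tau(P)=P$ for all $\tau\in G$; note that $M$, being Galois over $K$, is already $G$-stable.

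To attack this I would bring in the action of $\s$ on the Galois group. Fixing an extension of $\s$ to $L$, Lemma~\ref{lemma: extension of s almost unique} produces for each $\tau\in G$ a unique $\tau^\s\in G$ with $\tau\s=\s\tau^\s$, and $\tau\mapsto\tau^\s$ is a group endomorphism of $G$, exactly as in the discussion preceding Lemma~\ref{lemma: pis stable under base extension}. A preliminary observation is that this endomorphism preserves $\gal(L|M)$: if $\tau|_M=\id$ then for $b\in M$ one has $\s(b)=\tau(\s(b))=\s(\tau^\s(b))$, and injectivity of $\s$ forces $\tau^\s|_M=\id$. Here it is essential that $M$ is a $\s$-field, so that $\s(b)\in M$; this is the precise point where the hypothesis that $M$ is a $\s$-subfield (and Galois over $K$) enters, and it is what lets the twist interact correctly with the $M$-structure.

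I would then prove $G$-stability by passing to the separable closure $M_s$ of $M$ and working with periodic idempotents. By Lemma~\ref{lemma: pis stable under base extension} we have $P\otimes_M M_s=\pis(L\otimes_M M_s|M_s)$, and over the separably closed field $M_s$ this is spanned by periodic idempotents (Lemma~\ref{lemma: pis for separably algebraically closed}). I would transport the $G$-action to this setting and adapt the orbit-finiteness argument from Step~1 of Lemma~\ref{lemma: pis stable under base extension}: for a periodic idempotent $e$ with $\s^n(e)=e$ the identity $\tau(e)=\tau(\s^{ni}(e))=\s^{ni}(\tau^{\s^{ni}}(e))$, combined with the finiteness of the Galois orbit of $e$, yields $i<j$ with $\tau^{\s^{ni}}(e)=\tau^{\s^{nj}}(e)$, whence $\tau(e)=\s^{n(j-i)}(\tau(e))$ is again periodic. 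This shows that the span of periodic idempotents is carried into itself, giving $\tau(P)\subset P$; applying the same to $\tau^{-1}$ yields $\tau(P)=P$. The technical heart I expect to have to check is that the action of the \emph{absolute} group $G$ can indeed be matched against this idempotent picture compatibly with $\s$.

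The main obstacle, and the reason a naive argument fails, is that $\s$ need not be invertible: one cannot simply conjugate $\s$ past $\tau$, and in fact the twist $\tau\mapsto\tau^\s$ is not surjective in general, its failure to be bijective measuring exactly the failure of $L|K$ to be $\s$-separable (concretely, $\tau_1,\tau_2$ with $\tau_1^\s=\tau_2^\s$ need only agree on $\s(L)$). The substitute for invertibility is the finiteness of the $\s$-twisted Galois orbits, available because $L|K$ is algebraic; this is what forces periodicity to be preserved and is the crux of the proof. Without the $\s$-separability built into $\pis$ the analogous statement for $\core(L|M)$ breaks down, which is precisely the phenomenon recorded in Example~\ref{ex: core not galois}.
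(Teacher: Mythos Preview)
Your overall strategy (reduce to $G$-stability of $P=\pis(L|M)$, then pass to the separable closure and use the periodic-idempotent description) is sound and is a genuinely different route from the paper's. The paper never leaves $L$: given $a\in P$ with $N=M(a)$, it picks $n$ so that $[K(\s^i(a)):K]$ has stabilised, sets $N'=M(a_1,\dots,a_m)$ where the $a_j$ are the $K$-conjugates of $\s^n(a)$, and then checks by hand that $N'$ is Galois over $K$, is $\s$-stable (via irreducibility of ${}^{\s}\!f$), contains $N$ (via $M(a)=M(\s^n(a))$ from $\s$-separability), and is $\s$-separable over $M$ (a surjectivity argument using $\tau^\s$). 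This is a direct construction of a $K$-Galois, $M$-\ssetale{} envelope of each element of $P$; your approach instead tries to show all of $P$ is Galois-stable in one stroke.

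There is, however, a real gap where you write ``transport the $G$-action to this setting and adapt Step~1 of Lemma~\ref{lemma: pis stable under base extension}''. That step uses the action of $\gal(k_s|k)$ on the \emph{right} factor of $R\otimes_k k_s$ only; the analogue here would be $\gal(M_s|M)$ acting on $L\otimes_M M_s$ via the second factor, which tells you nothing about stability under $\gal(L|K)$. And $G=\gal(L|K)$ itself does not act on $L\otimes_M M_s$, because its action on $L$ is not $M$-linear. The fix is to pass to the absolute group $\Gal=\gal(K_s|K)$ (so $M_s=K_s$), let $\Gal$ act \emph{diagonally} on $L\otimes_M K_s$ (well-defined since $M$ is $\Gal$-stable), and verify the twist identity $\tilde\tau\,\s=\s\,\tilde\tau^{\s}$ on the tensor product from the two separate identities on $L$ and on $K_s$. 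Orbit-finiteness of idempotents then needs a short extra argument: a periodic idempotent $e$ lies in $F\otimes_M K_s$ for some finite $F|M$, hence in $F'\otimes_M K_s$ for the (finite) Galois closure $F'$ of $F$ over $K$ inside $L$, and all $\Gal$-translates of $e$ stay in this finite $K_s$-algebra. With this in place your formula $\tau(e)=\s^{ni}(\tau^{\s^{ni}}(e))$ yields periodicity of $\tilde\tau(e)$, and descent from $P\otimes_M K_s$ back to $P$ gives $G$-stability. So your plan works, but the ``transport'' is not the same action as in Step~1 and needs to be set up explicitly.
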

\begin{proof}
	It suffices to show that any finite $\s$-field extension $N\subset\pis(L|M)$ of $M$ is contained in a finite Galois $\s$-field extension $N'\subset\pis(L|M)$ of $M$.
	
	Let $a\in N$ be such that $N=M(a)$. The sequence $([K(\s^i(a)):K])_{i\in\mathbb{N}}$ is non-increasing and therefore stabilizes. Let $n\in\mathbb{N}$ be such that $[K(\s^n(a)):K]=[K(\s^{i}(a)):K]$ for $i\geq n$.
	Let $a_1,\ldots,a_m\in L$ denote the conjugates of $\s^n(a)$ over $K$, where $a_1=\s^n(a)$.
	
	Then $N'=M(a_1,\ldots,a_m)$ is a Galois extension of $K$.
	Because $M(a)$ is $\s$-separable over $M$, $\s(a)$ has the same degree as $a$ over $M$. So $M(a)=M(\s(a))$. Inductively, it follows that $M(a)=M(\s^n(a))\subset N'$.
	
	We will next show that $N'$ is a $\s$-field. It suffices to show that $\s(a_1),\ldots,\s(a_m)\in N'$. Let $f$ denote the minimal polynomial of $a_1=\s^n(a)$ over $K$ and let ${\hs f}$ be the polynomial obtained from $f$ by applying $\s$ to the coefficients. Then ${\hs f}(\s^{n+1}(a))=0$ and because $[K(\s^{n+1}(a)):K]=[K(\s^{n}(a)):K]$, the polynomial ${\hs f}$ is irreducible. Since $N'|K$ is Galois and the polynomial ${\hs f}$ has the root $\s(a_1)=\s^{n+1}(a)\in N'$, all the roots of ${\hs f}$ lie in $N'$. But the roots of ${\hs f}$ are $\s(a_1),\ldots,\s(a_m)$. So $N'$ is a $\s$-subfield of $L$.

	It remains to show that $N'$ is $\s$-separable over $M$. 
	As $M(a_1)=M(a)$ is $\s$-separable and finite over $M$ the canonical map ${\hs (M(a_1))}=M(a_1)\otimes_M M\to M(a_1)$ is surjective. So we can write
	\begin{equation} \label{eq: a1}
	a_1=\sum_i \s(f_i(a_1))\lambda_i,
	\end{equation}
	where the $f_i$'s are polynomials over $M$ and $\lambda_i\in M$. Now fix $j\in\{1,\ldots,m\}$ and let $\tau$ be an element of the Galois group of $L|K$ that maps $a_1$ to $a_j$. Recall (Equation (\ref{eq: operation of gal})) that $\tau(\s(b))=\s(\tau^\s(b))$ for $b\in L$. Applying $\tau$ to Equation (\ref{eq: a1}) yields
	\begin{equation} \label{eq: aj}
	a_j=\tau(a_1)=\sum_i\s\left({^{\tau^\s}\! f_i}(\tau^\s(a_1))\right)\tau(\lambda_i)
	\end{equation}
	where ${^{\tau^\s}\! f_i}$ is the polynomial obtained from $f_i$ by applying $\tau^\s$ to the coefficients.
	Since $M$ is Galois over $K$, we see that ${^{\tau^\s}\! f_i}(\tau^\s(a_1))\in N'$ and $\tau(\lambda_i)\in M$. Thus Equation (\ref{eq: aj}) shows that $a_j$ lies in the image of $\psi\colon {\hs(N')}= N'\otimes_M M\to N'$. Therefore $\psi$ is surjective and hence also injective. It follows from Proposition \ref{prop: sseparable} that $N'$ is $\s$-separable over $M$.
\end{proof}

\begin{cor} \label{cor: pis Galois}
	Let $L|K$ be an extension of $\s$-fields such that $L$ is Galois over $K$. Then $\pis(L|K)$ is Galois over $K$.
\end{cor}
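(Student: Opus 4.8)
The plan is to observe that this corollary is nothing more than the special case $M=K$ of Proposition \ref{prop: pis Galois}. First I would note that $K$ is (trivially) an intermediate $\s$-field of the extension $L|K$, and that $K$ is a Galois extension of itself in the degenerate sense. Thus, with $L$ Galois over $K$ by hypothesis, the assumptions of Proposition \ref{prop: pis Galois} are met by taking $M=K$. Applying that proposition then yields directly that $\pis(L|K)=\pis(L|M)$ is Galois over $K$, which is precisely the assertion.

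Consequently there is no genuine obstacle remaining at the level of the corollary: all of the real work --- constructing the Galois closure $N'=M(a_1,\ldots,a_m)$ of a generator, verifying that it is stable under $\s$ using the irreducibility of ${\hs f}$, and checking $\s$-separability via the surjectivity of the canonical map ${\hs(N')}\to N'$ together with the $\Gal$-action of Equation (\ref{eq: operation of gal}) --- has already been carried out in the proof of the proposition. Should a reader prefer a self-contained derivation that avoids invoking an auxiliary intermediate field, one may simply specialize the proof of Proposition \ref{prop: pis Galois} by substituting $K$ for $M$ everywhere; this slightly simplifies the bookkeeping (in particular the coefficients $\lambda_i$ and $\tau(\lambda_i)$ then lie in $K$ outright) but changes nothing of substance.
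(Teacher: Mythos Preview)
Your proposal is correct and matches the paper's own proof exactly: the corollary is obtained by specializing Proposition~\ref{prop: pis Galois} to the case $M=K$. The additional commentary you provide about the content of the proposition's proof is accurate but unnecessary for the corollary itself.
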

\begin{proof}
	This is Proposition \ref{prop: pis Galois} with $M=K$.		
\end{proof}

The following example shows that Corollary \ref{cor: pis Galois} (and thus also Lemma \ref{prop: pis Galois}) does not remain valid with $\core(L|K)$ in place of $\pis(L|K)$. In other words, if $L|K$ is an extension of $\s$-fields such that $L$ is Galois over $K$, then $\core(L|K)$ need not be Galois over $K$.
%move example because ld and benign not yet defined

\begin{ex} \label{ex: core not galois} 
%This is a counterexample to \cite[Corollary 5.4.2, p. 333]{Levin:difference} the Proof of Prop. 5.4.1 has errors or would only work when $K$ is inversive. @Ivan: It might be best to not state that there is some mistake in Levins Book nor refer to his book here. We might tell him in a private communication though. What do you think? for easiest counterexample need benign extension with ld 6, Galois group S3, more details below
We use the theory of the limit degree and benign extensions explained in Section~\ref{section:babb}.
	Let $x,y,z$ denote $\s$-variables over $\C$, where we consider $\C$ as a $\s$-field by virtue of the identity map, and denote $k=\C\l x,y,z\r$. We fix an extension of $\s\colon k\to k$ to the algebraic closure $\bar{k}$ of $k$. %$\C\l x,y,z\r$.
	The polynomial $f=T^3+xT^2+yT+z$ has Galois group $S_3$ over $k$. %$\C\l x,y,z\r$. 
Let $a,b,c\in \bar{k}$ denote the roots of $f$. Then $L=k\l a,b,c\r$  %$\C\l x,y,z\r\l a,b,c\r$ 
is benign over $k$ with limit degree $6$. 

Let $K=k\l\s(a)\r$. %$K=\C\l x,y,z\r\l\s(a)\r$.
Then $K(a)$ is a $\s$-field extension of $K$ and therefore $a\in\core(L|K)$. The element $b\in L$ is conjugate with $a$ over $K$ but $K\l b\r$ is not a finite extension of $K$, since
\begin{multline*}
\ld(K\l b\r|K)\geq \ld(K(a)\l b\r|K(a))=\ld(k\l a,b\r| k\l a\r)\\
= \ld(k\l a,b,c\r | k\l a\r)=\ld(k\l a,b,c\r|k)/ \ld(k\l a\r|k)=6/3=2.
\end{multline*}
	According to Exercise 4.3.27 in \cite{Levin:difference}, one actually has $\ld(K\l b\r|K)=2$. In any case $b\notin\core(L|K)$. This shows that $\core(L|K)$ is not Galois. In particular, $\pis(L|K)\subsetneqq\core(L|K)$.
	
%Indeed, $\C\l x,y,z\r (a,b)$ has degree ?? $\s(b),\ldots,\s^n(b))$ has degree $3^{n+1}$ over $K$. 
	
%	
%Thus $b\notin\core(L|K)$. 
\end{ex}

\section{Babbitt's decomposition}\label{section:babb}

Babbitt's decomposition is an important structure theorem for finitely $\s$-generated extensions of $\s$-fields $L|K$ such that $L|K$ is Galois (as a field extension). See \cite[Theorem 2.3] {Babbitt:FinitelyGeneratedPathologicalExtensionsOfDifferenceFields} or \cite[Theorem 5.4.13]{Levin:difference}. To state Babbitt's decomposition theorem, we first need to recall the definition of the limit degree, an important numerical invariant of $\s$-field extensions. (See \cite[Section 4.3]{Levin:difference} or \cite[Chapter 5, Section 16, p. 135]{Cohn:difference}.)

%The proof of Babbitt's decomposition is by induction on a numerical invariant called the limit degree.
%
%
% Let us recall the definition (see \cite[Section 4.3]{Levin:difference} or \cite[Chapter 5, Section 16, p. 135]{Cohn:difference}).

Let $L|K$ be a finitely $\s$-generated extension of $\s$-fields, say $L=K\langle F\rangle$, with $F\subset L$ finite. For $i\in\N$ let
\begin{equation} \label{eq: di}
d_i=\left[K\left(F,\s(F),\ldots,\s^i(F)\right):K\left(F,\s(F),\ldots,\s^{i-1}(F)\right)\right].
\end{equation} Then the sequence $(d_i)_{i\in\N}$ is non-increasing and therefore stabilizes. The eventual value $\ld(L|K)=\lim\limits_{i\to\infty}d_i$ does not depend on the choice of $F$ and is called the \emph{limit degree} of $L|K$.

An extension $L|K$ of $\s$-fields is called \emph{benign} (\cite[Def. 5.4.7]{Levin:difference}) if there exists an intermediate field $K\subset M\subset L$ such that $M$ is finite and Galois over $K$ with $L=K\langle M\rangle$ and $\ld(L|K)=[M:K]$. Assuming that $M$ is finite and Galois with $L=K\langle M\rangle$, the condition $\ld(L|K)=[M:K]$ is equivalent to the condition that $[K(\s^i(M)):K]=[M:K]$ for all $i\in\N$ and the fields $(K(\s^i(M)))_{i\in N}$ are linearly disjoint over $K$.
If $L|K$ is benign, then $L|K$ is Galois and finitely $\s$-generated.

\begin{theo}[Babbitt's Decomposition Theorem, {\cite[Theorem 2.3]{Babbitt:FinitelyGeneratedPathologicalExtensionsOfDifferenceFields}, \cite[Theorem 5.4.13]{Levin:difference}}]
	Let $K$ be an inversive $\s$-field and let $L|K$ be a finitely $\s$-generated $\s$-field extension of $K$ such that $L|K$ is Galois (as a field extension). Then there exists a chain of intermediate $\s$-fields
	$$K\subset L_0\subset L_1\subset\cdots\subset L_n\subset L$$
	such that $L_0=\core(L|K)$, $L_i$ is benign over $L_{i-1}$ for $i=1,\ldots,n$ and $L_n$ and $L$ have identical inversive closures.
\end{theo}

% In the original formulation it is assumed that the base $\s$-field $K$ is inversive.
 
  Here we will show that the assumption that $K$ is inversive is not necessary. In \cite{ive-tgs} and \cite{ive-tgsacfa}, the author has shown how to use Babbitt's decomposition for $K$ not necessarily inversive, by imposing the assumption that $L|K$ be $\s$-separable, but in the present paper we proceed in a more elegant way.

Our overall strategy of proof is similar to Babbitt's. However, the original proof uses the assumption that $K$ is inversive in various places and therefore several changes and complements are necessary. We use $\pis(L|K)$ rather than $\core(L|K)$, as well as the notion of a substandard generator rather than the notion of a standard generator (see Definitions \ref{defi: standard generator} and \ref{defi: substandard generator} below). Consequently, the overall structure of the proof now appears somewhat clearer, because we do not need to make the extra step of passing to the inversive closure in order to be able to apply the induction hypothesis.

We also recall the notion of a standard generator (\cite[Def. 5.4.5]{Levin:difference} or \cite[p. 217]{Cohn:difference}).

\begin{defi} \label{defi: standard generator}
Let $L|K$ be a finitely $\s$-generated extension of $\s$-fields such that $L|K$ is Galois. An element $a\in L$ is called a \emph{standard generator of $L|K$} if the following properties are satisfied:
\begin{itemize}
\item $K\l a\r=L$.
\item $[K(a,\s(a)):K(a)]=\ld(L|K)$.
\item The field extension $K(a)|K$ is Galois.
\end{itemize}
A \emph{minimal standard generator} $a$ of $L|K$ is a standard generator $a$ of $L|K$ such that  $[K(a):K]\leq [K(b):K]$ for every standard generator $b$ of $L|K$.
\end{defi}

\begin{defi} \label{defi: substandard generator}
	Let $L|K$ be a finitely $\s$-generated extension of $\s$-fields such that $L|K$ is Galois. An element $a\in L$ is called a \emph{substandard generator of $L|K$} if the following properties are satisfied:
	\begin{itemize}
		\item $L$ is $\s$-radicial over $K\l a\r$.
		\item $[K(a,\s(a)):K(a)]=\ld(L|K)$.
		\item The field extension $K(a)|K$ is Galois.
	\end{itemize}
	A \emph{minimal substandard generator} $a$ of $L|K$ is a substandard generator $a$ of $L|K$ such that  $[K(a):K]\leq [K(b):K]$ for every substandard generator $b$ of $L|K$.
\end{defi}

As explained on page 334 in \cite{Levin:difference}, for any finitely $\s$-generated extension of $\s$-fields $L|K$ such that $L|K$ is Galois, there exists a standard generator of $L|K$. In particular, there exists a minimal substandard generator.

%\begin{lemma}
%Let $L|K$ be a finitely $\s$-generated extension of $\s$-fields such that $L|K$ is Galois. Then there exists a standard generator of $L|K$.
%\end{lemma}
%\begin{proof}
%Let $A$ be a finite $\s$-generating set of $L|K$. Enlarging $A$ if necessary, we can assume that $K(A)|K$ is Galois. By the primitive element theorem there exists an element $a\in K(A)$ such that $K(a)=K(A)$. This implies that $K\l a\r=L$.
%There exists an $n\in\N$ such that $[K(a,\ldots,\s^{n+1}(a)):K(a,\ldots,\s^{n}(a))]=\ld(L|K)$.
%
%Because $K(a)|K$ is Galois, also $K(a,\ldots,\s^n(a))|K$ is Galois. Again, by the primitive element theorem, there exists an element $b\in K(a,\ldots,\s^n(a))$ such that $K(b)=K(a,\ldots,\s^n(a))$. Then $K(b,\s(b))=K(a,\ldots,\s^{n+1}(a))$, therefore
%\[[K(b,\s(b)):K(b)]=[K(a,\ldots,\s^{n+1}(a)):K(a,\ldots,\s^{n}(a))]=\ld(L|K).\]
%So $b$ is a standard generator for $L|K$.
%\end{proof}

Let $L|K$ be a finitely $\s$-generated extension of $\s$-fields such that $L|K$ is Galois and let $a$ be a minimal standard generator of $L|K$. As the sequence of the $d_i$'s (Equation (\ref{eq: di})) is non-increasing, we have
$$[K(a):K]\geq [K(a,\s(a)):K(a)]=\ld(L|K).$$
\begin{lemma} \label{lemma: characterize benign}
Let $L|K$ be a finitely $\s$-generated extension of $\s$-fields such that $L|K$ is Galois and let $a$ be a minimal standard generator of $L|K$. Then $L|K$ is benign if and only if $[K(a):K]=\ld(L|K)$.
\end{lemma}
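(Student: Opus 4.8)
The plan is to prove both implications, the reverse one being essentially immediate and the forward one requiring us to manufacture a standard generator of minimal possible degree.

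For the direction $(\Leftarrow)$, suppose $[K(a):K]=\ld(L|K)$. I would simply take $M=K(a)$. Since $a$ is a standard generator, $K(a)|K$ is Galois, and because $[K(a):K]=\ld(L|K)$ is finite, $M$ is a finite Galois extension of $K$. As $L=K\l a\r$ we clearly have $L=K\l M\r$, while $\ld(L|K)=[M:K]$ holds by hypothesis. Hence $L|K$ is benign directly from the definition, with no further work needed.

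For the direction $(\Rightarrow)$, suppose $L|K$ is benign, witnessed by an intermediate field $M$ that is finite and Galois over $K$ with $L=K\l M\r$ and $\ld(L|K)=[M:K]$; recall that this forces $[K(\s^i(M)):K]=[M:K]$ for all $i$ and makes the fields $(K(\s^i(M)))_i$ linearly disjoint over $K$. My strategy is to exhibit a standard generator $b$ of $L|K$ with $[K(b):K]=\ld(L|K)$, and then compare with $a$ via minimality. By the primitive element theorem write $M=K(b)$. Then $K\l b\r\supseteq K(b)=M$, so $K\l b\r\supseteq K\l M\r=L$, giving $K\l b\r=L$; also $K(b)=M$ is Galois over $K$. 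The one point requiring care is the middle condition. Since $\s(K)\subseteq K$, one has $K(\s(M))=K(\s(b))$, whence $K(b,\s(b))=M\cdot K(\s(M))$. Linear disjointness of $M=K(M)$ and $K(\s(M))$ over $K$ then gives
$$[K(b,\s(b)):K(b)]=[M\cdot K(\s(M)):M]=[K(\s(M)):K]=[M:K]=\ld(L|K).$$
Thus $b$ is a standard generator with $[K(b):K]=[M:K]=\ld(L|K)$.

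Finally, I would invoke the minimality of $a$: since $a$ is a \emph{minimal} standard generator, $[K(a):K]\leq[K(b):K]=\ld(L|K)$. Combined with the inequality $[K(a):K]\geq\ld(L|K)$ recorded immediately before the lemma (from the monotonicity of the $d_i$), this forces $[K(a):K]=\ld(L|K)$, completing the forward direction. The main obstacle — in fact the only place any computation is needed — is the degree calculation for $[K(b,\s(b)):K(b)]$, where one must translate the benignity hypothesis into the linear-disjointness statement for $M$ and its $\s$-shift and correctly identify $K(\s(M))$ with $K(\s(b))$; everything else is a routine unwinding of the definitions of standard generator and of benign extension.
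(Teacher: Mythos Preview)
Your proof is correct and follows essentially the same route as the paper: take a primitive element $b$ of the witnessing field $M$, check that $b$ is a standard generator with $[K(b):K]=\ld(L|K)$, and then compare with $a$ via minimality and the a priori inequality $[K(a):K]\geq\ld(L|K)$. The paper is considerably terser, simply asserting that $b$ is a (minimal) standard generator, whereas you spell out the degree computation $[K(b,\s(b)):K(b)]=\ld(L|K)$ via the linear-disjointness reformulation of benignity; this is exactly the content the paper leaves implicit.
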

\begin{proof}
Assume that $L|K$ is benign and let $M\subset L$ be a finite Galois extension of $K$ such that $L=K\langle M\rangle$ and $\ld(L|K)=[M:K]$. Choose $b\in M$ such that  $M=K(b)$. Then $b$ is a minimal standard generator of $L|K$ and $[K(b):K]=\ld(L|K)$.

Conversely, if $a$ is a minimal standard generator of $L|K$ with $[K(a):K]=\ld(L|K)$, then $M=K(a)\subset L$ is a Galois extension of $K$ with $K\langle M\rangle =L$ and $\ld(L|K)=[M:K]$.
\end{proof}

\begin{cor} \label{cor: substandard implies almost benign}
	Let $L|K$ be a finitely $\s$-generated extension of $\s$-fields such that $L|K$ is Galois and let $a$ be a minimal substandard generator of $L|K$. If $[K(a):K]=\ld(L|K)$, then $K\l a\r|K$ is benign.
\end{cor}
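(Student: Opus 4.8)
The plan is to verify the definition of a benign extension directly, with $M=K(a)$ playing the role of the finite Galois intermediate field. Three things must be checked: that $K(a)$ is finite and Galois over $K$, that $K\l a\r=K\langle K(a)\rangle$, and that $\ld(K\l a\r|K)=[K(a):K]$. The first two are essentially immediate. The extension $K(a)|K$ is Galois by the third bullet of Definition~\ref{defi: substandard generator}, and it is finite because $[K(a):K]=\ld(L|K)$ is finite, being the limit degree of the algebraic extension $L|K$. The identity $K\l a\r=K\langle K(a)\rangle$ holds trivially, since $a\in K(a)\subset K\l a\r$ forces the smallest $\s$-field containing $K$ and $K(a)$ to coincide with $K\l a\r$.

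The real content is the limit degree equality. First I would record that $L$ is $\s$-radicial over $K\l a\r$ (the first bullet of Definition~\ref{defi: substandard generator}) and that $L$ stays finitely $\s$-generated over the larger base $K\l a\r$, since it is so over the smaller field $K$. For such an extension the limit degree equals $1$: writing $L=K\l a\r\langle b_1,\ldots,b_r\rangle$ and choosing $n$ with $\s^n(b_j)\in K\l a\r$ for all $j$, one sees that in the defining sequence (\ref{eq: di}) every term with index $i\geq n$ equals $1$, because each $\s^i(b_j)$ with $i\geq n$ already lies in the $\s$-field $K\l a\r$. Hence the eventual value is $\ld(L|K\l a\r)=1$.

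Now I would invoke the multiplicativity of the limit degree along the tower $K\subset K\l a\r\subset L$, namely $\ld(L|K)=\ld(L|K\l a\r)\cdot\ld(K\l a\r|K)$ (\cite[Section 4.3]{Levin:difference}). Combined with $\ld(L|K\l a\r)=1$ this gives $\ld(K\l a\r|K)=\ld(L|K)$, and the hypothesis $[K(a):K]=\ld(L|K)$ then yields $\ld(K\l a\r|K)=[K(a):K]$. Together with the first two verifications, this exhibits $K(a)$ as a witness to benignity, so $K\l a\r|K$ is benign.

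I do not expect a serious obstacle here: the argument is a direct unwinding of the definitions, and the only genuine inputs are the two standard limit-degree facts (multiplicativity in towers, and the vanishing to $1$ for $\s$-radicial extensions). The point deserving mild care is that both facts concern only the ordinary field degrees in the tower $K(a,\ldots,\s^i(a))$ and do not rely on $K$ being inversive, which is precisely what makes them usable in the present non-inversive setting. It is worth noting that neither the minimality of $a$ nor the second bullet of Definition~\ref{defi: substandard generator} is actually needed for this implication; the latter only serves to guarantee $[K(a):K]\geq\ld(L|K)$, so that the hypothesis identifies the minimal possible value.
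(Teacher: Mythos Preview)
Your proof is correct and shares the paper's two essential inputs: that a finitely $\s$-generated $\s$-radicial extension has limit degree $1$, and the multiplicativity of the limit degree in towers, yielding $\ld(K\l a\r|K)=\ld(L|K)$. Where you diverge is in the final step. The paper argues that any standard generator of $K\l a\r|K$ is automatically a substandard generator of $L|K$ (because $\ld(K\l a\r|K)=\ld(L|K)$), whence the \emph{minimality} of $a$ forces $a$ to be a minimal standard generator of $K\l a\r|K$, and Lemma~\ref{lemma: characterize benign} then gives benignity. You instead verify the definition of benign directly with $M=K(a)$, which is shorter and, as you correctly observe, shows that neither the minimality of $a$ nor the second bullet of Definition~\ref{defi: substandard generator} is actually used here; only the first and third bullets together with the hypothesis $[K(a):K]=\ld(L|K)$ are needed. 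The paper's route has the small advantage of exhibiting $a$ as a minimal standard generator of $K\l a\r|K$, a fact implicitly reused later, but for the bare statement of the corollary your direct verification is cleaner.
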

\begin{proof}
	A finitely $\s$-generated extension of $\s$-fields has limit degree one if and only if it is finitely generated as a field extension (\cite[Theorem XV, Chapter 5, p. 143]{Cohn:difference}). A finitely $\s$-generated $\s$-radicial extension of $\s$-fields is finitely generated as a field extension and therefore has limit degree one. So $\ld(L|K\langle a\rangle)=1$. Using the multiplicativity of the limit degree (\cite[Theorem 4.3.4]{Levin:difference}) we find
	$$\ld(L|K)=\ld(L|K\langle a\rangle)\cdot\ld(K\langle a\rangle|K)=\ld(K\langle a\rangle|K).$$
	So a standard generator of $K\l a\r$ over $K$ is a substandard generator of $L$ over $K$. Therefore $a$ is a minimal standard generator of $K\l a\r$ over $K$ and it follows from Lemma \ref{lemma: characterize benign} that $K\l a\r|K$ is benign.
\end{proof}

\begin{lemma} \label{lemma: s(a) substandard generator}
	Let $L|K$ be a finitely $\s$-generated extension of $\s$-fields such that $L|K$ is Galois and let $a$ be a substandard generator of $L|K$. Then also $\s(a)$ is a substandard generator of $L|K$. Moreover, if $a$ is a minimal substandard generator of $L|K$ then $[K(\s(a)):K]=[K(a):K]$.
\end{lemma}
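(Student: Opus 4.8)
The plan is to verify that $\s(a)$ satisfies the three defining conditions of a substandard generator and then to deduce the degree equality in the minimal case from the fact that the sequence $([K(\s^i(a)):K])_{i\in\N}$ is non-increasing. Let me begin with the condition that $K(\s(a))|K$ is Galois. Let $f$ be the minimal polynomial of $a$ over $K$. Since $K(a)|K$ is Galois, $f$ is separable and all its conjugates $a=a_1,\ldots,a_n$ lie in $K(a)$; writing $a_i=g_i(a)$ with $g_i\in K[T]$ and applying $\s$ shows that the roots $\s(a_1),\ldots,\s(a_n)$ of ${\hs f}$ all lie in $\s(K)(\s(a))\subseteq K(\s(a))$. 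As the minimal polynomial of $\s(a)$ over $K$ divides ${\hs f}$ (note that ${\hs f}$ has coefficients in $\s(K)\subseteq K$), it is separable and splits in $K(\s(a))$; hence $K(\s(a))|K$ is Galois.

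Next I would establish that $L$ is $\s$-radicial over $K\l\s(a)\r$. Since $\s(K\l a\r)=\s(K)\l\s(a),\s^2(a),\ldots\r\subseteq K\l\s(a)\r$, every element of $K\l a\r$ lands in $K\l\s(a)\r$ after a single application of $\s$, so $K\l a\r$ is $\s$-radicial over $K\l\s(a)\r$. Combining this with the hypothesis that $L$ is $\s$-radicial over $K\l a\r$, Lemma \ref{lemma: sradicial transitive} gives that $L$ is $\s$-radicial over $K\l\s(a)\r$. In particular $\ld(L|K\l\s(a)\r)=1$, so by multiplicativity of the limit degree $\ld(K\l\s(a)\r|K)=\ld(L|K)$; write $d$ for this common value.

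The remaining and most delicate condition is $[K(\s(a),\s^2(a)):K(\s(a))]=d$. Using $\s(a)$ as a $\s$-generator of $K\l\s(a)\r$, the degree sequence $[K(\s(a),\ldots,\s^{i+1}(a)):K(\s(a),\ldots,\s^i(a))]$ is non-increasing with limit $d$, so in particular $[K(\s(a),\s^2(a)):K(\s(a))]\geq d$. For the reverse inequality I would apply the field embedding $\s$ to the equality $[K(a,\s(a)):K(a)]=d$, obtaining $[\s(K)(\s(a),\s^2(a)):\s(K)(\s(a))]=d$; since $\s(K)(\s(a))\subseteq K(\s(a))$ and enlarging the base field can only decrease the degree of $\s^2(a)$, we get $[K(\s(a),\s^2(a)):K(\s(a))]\leq d$. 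Hence equality holds and $\s(a)$ is a substandard generator of $L|K$.

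Finally, for the minimal case, since $\s(a)$ is a substandard generator and $a$ is minimal, $[K(a):K]\leq[K(\s(a)):K]$; on the other hand $\s(a)$ is a root of ${\hs f}$, whose degree equals $[K(a):K]$, so $[K(\s(a)):K]\leq[K(a):K]$, giving the desired equality. I expect the main obstacle to be the limit-degree condition, and specifically its reverse inequality, which hinges on transporting the degree equality through the embedding $\s$ and comparing the base fields $\s(K)(\s(a))\subseteq K(\s(a))$; the other two conditions and the minimality argument are comparatively routine once the conjugate-polynomial bookkeeping via ${\hs f}$ is in place.
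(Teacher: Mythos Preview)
Your proof is correct and follows essentially the same route as the paper's: $\s$-radiciality via transitivity, the Galois condition via the shifted minimal polynomial ${\hs f}$, the lower bound on $[K(\s(a),\s^2(a)):K(\s(a))]$ from the non-increasing degree sequence together with $\ld(K\l\s(a)\r|K)=\ld(L|K)$, and the upper bound by transporting the degree through $\s$. The only cosmetic difference is that the paper obtains the upper bound by applying $\s$ to the minimal polynomial of $\s(a)$ over $K(a)$ rather than to the degree equality itself, and the paper leaves the inequality $[K(\s(a)):K]\leq[K(a):K]$ implicit in the final step where you spell it out via ${\hs f}$.
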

\begin{proof}
	Because $L|K\l a\r$ and $K\l a\r|K\l\s(a)\r$ are $\s$-radical, also $L|K\l \s(a)\r$ is $\s$-radicial (Lemma \ref{lemma: sradicial transitive}).
	
	If $f\in K(a)[x]$ is the minimal polynomial of $\s(a)$ over $K(a)$, then ${\hs f}(\s^2(a))=0$ and ${\hs f}\in K(\s(a))[x]$. Thus the degree of $\s^2(a)$ over $K(\s(a))$ is less than or equal to the degree of $\s(a)$ over $K(a)$. Therefore 
	$$[K\left(\s(a),\s^2(a)\right):K(\s(a))]\leq [K(a,\s(a)):K(a)]=\ld(L|K).$$
	On the other hand, $[K\left(\s(a),\s^2(a)\right):K(\s(a))]\geq \ld(K\l\s(a)\r|K)$ and (as in the proof of Corollary \ref{cor: substandard implies almost benign}) $$\ld(L|K)=\ld(L|K\l\s(a)\r)\cdot\ld(K\l\s(a)\r|K)=\ld(K\l\s(a)\r|K).$$ Therefore $[K\left(\s(a),\s^2(a)\right):K(\s(a))]=\ld(L|K)$. Finally $K(\s(a))|K$ is Galois because $K(a)|K$ is Galois. So $\s(a)$ is a substandard generator of $L|K$.
	
	If we assume that $a$ is a minimal substandard generator of $L|K$, the fact that also $\s(a)$ is a substandard generator of $L|K$, implies that $[K(a):K]\leq [K(\s(a)):K]$ and so $[K(a):K]=[K(\s(a)):K]$.	
\end{proof}

\begin{lemma} \label{lemma: si(b)notin K}
		Let $L|K$ be a finitely $\s$-generated extension of $\s$-fields such that $L|K$ is Galois and let $a$ be a minimal substandard generator of $L|K$. Let $b\in K(a)$ with $b\notin K$. Then $\s^i(b)\notin K$ for $i\in\N$. 
\end{lemma}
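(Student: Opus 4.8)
The plan is to argue by contradiction and reduce to the case $i=1$, where the statement becomes a short degree computation resting entirely on Lemma~\ref{lemma: s(a) substandard generator}. Suppose some $b\in K(a)\setminus K$ satisfies $\s^i(b)\in K$ with $i\geq 1$, and choose $i$ minimal. By Lemma~\ref{lemma: s(a) substandard generator}, whenever $a$ is a minimal substandard generator so is $\s(a)$ (and $[K(\s(a)):K]=[K(a):K]$); iterating, each $\s^j(a)$ is a minimal substandard generator of $L|K$. Since $b\in K(a)$ gives $\s^{i-1}(b)\in K(\s^{i-1}(a))$, and $\s^{i-1}(b)\notin K$ by minimality of $i$, I may replace $a$ by $\s^{i-1}(a)$ and $b$ by $\s^{i-1}(b)$. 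Thus it suffices to treat the case $i=1$: a minimal substandard generator $a$, an element $b\in K(a)\setminus K$, and $\s(b)\in K$.

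In this reduced situation, set $s=[K(a):K]$ and write $b=p(a)$ with $p\in K[x]$ of degree $<s$ (using the $K$-basis $1,a,\dots,a^{s-1}$ of $K(a)$). Applying $\s$ yields $\s(b)={\hs p}(\s(a))$, so ${\hs p}(x)-\s(b)$ is a polynomial in $K[x]$ of degree $<s$ that vanishes at $\s(a)$. The decisive input is the degree equality $[K(\s(a)):K]=[K(a):K]=s$ furnished by Lemma~\ref{lemma: s(a) substandard generator}: the minimal polynomial of $\s(a)$ over $K$ has degree exactly $s$, so any polynomial of degree $<s$ vanishing at $\s(a)$ is identically zero. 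Hence ${\hs p}(x)=\s(b)$ is constant. Because $\s$ is injective on $K$, every nonconstant coefficient of $p$ must vanish and the constant coefficient of $p$ must equal $b$; therefore $b\in K$, contradicting $b\notin K$.

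The main obstacle, and the whole point of the hypotheses, is the degree equality $[K(\s(a)):K]=[K(a):K]$: this is precisely where the \emph{minimality} of the substandard generator enters through Lemma~\ref{lemma: s(a) substandard generator}, and without it a polynomial of degree $<s$ could vanish at $\s(a)$ and the argument would collapse. It is also worth emphasizing why the coefficient-injectivity step is unavoidable here: since $K$ need not be inversive, one cannot simply ``pull back by $\s$'' to deduce $b\in K$ from $\s(b)\in K$; instead the non-vanishing of the low-degree polynomial is forced by the degree bound on $\s(a)$, and only then does injectivity of $\s$ on $K$ transfer the conclusion from $\s(b)$ back to $b$. I expect the verification of the reduction step (that each $\s^j(a)$ remains a \emph{minimal} substandard generator) and the degree equality to be the only places requiring care, both being immediate citations of Lemma~\ref{lemma: s(a) substandard generator}.
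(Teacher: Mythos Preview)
Your proof is correct and follows essentially the same approach as the paper: both use the degree equality $[K(\s^i(a)):K]=[K(a):K]$ from Lemma~\ref{lemma: s(a) substandard generator}, expand $b$ in the power basis $1,a,\ldots,a^{n-1}$, apply $\s^i$, and conclude from linear independence together with injectivity of $\s$ on $K$. The paper simply runs this argument directly for arbitrary $i$ rather than first reducing to $i=1$, so your reduction step (and the observation that each $\s^j(a)$ is again \emph{minimal}) is correct but unnecessary.
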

\begin{proof}
	%Let $f$ denote the minimal polynomial of $a$ over $k$.
    %Because ${\hsi f}(\s^i(a))=0$, we see that $[K(\s^i(a)):K]\leq [K(a):K]$ for any $i\in\N$.%
    By Lemma \ref{lemma: s(a) substandard generator} we have $[K(\s^i(a)):K]=[K(a):K]$ for any $i\in\N$.
    %the element $\s^i(a)$ is a substandard generator of $L$ over $K$. Since $a$ is a minimal substandard generator we must actually have $[K(\s^i(a)):K]=[K(a):K]$.
    So $$1,\s^i(a),\s^i(a)^2,\ldots,\s^i(a)^{n-1}$$ are $K$-linearly independent, where $n$ is the degree of $a$ over $K$.
	Write $b=b_0+b_1a+\cdots+b_{n-1}a^{n-1}$ with $b_0,\ldots,b_{n-1}\in K$. Then $$\s^i(b)=\s^i(b_0)+\s^i(b_1)\s^i(a)+\cdots+\s^i(b_n)\s^i(a)^{n-1}.$$ Suppose $\s^i(b)\in K$. Because $1,\s^i(a),\s^i(a)^2,\ldots,\s^i(a)^{n-1}$ are $K$-linearly independent, it follows that $\s^i(b)=\s^i(b_0)$. So $b=b_0\in K$, in contradiction to the assumption $b\notin K$.
%	
%	 We claim that ${\hsi f}$ is irreducible for every $i\geq 0$. Suppose the contrary. Then $[K(\s^i(a)):K]<[K(a):K]$. Since $L$ is $\s$-radicial over $K\l a\r$ and $K\l a\r$ is $\s$-radicial over $K\l \s^i(a)\r$, we see that $L$ is $\s$-radicial over $K\l\s^i(a)\r$. This shows that $\s^i(a)$ 
\end{proof}

\begin{lemma} \label{lemma: Babbit sradicial}
	Let $L|K$ be a $\s$-radicial extension of $\s$-fields and $a_1,\ldots,a_n$ elements in a $\s$-field extension of $L$ such that $L\l a_1,\ldots,a_{i}\r$ is benign over
	$L\l a_1,\ldots,a_{i-1}\r$ with minimal standard generator $a_{i}$ for $i=1,\ldots,n$. Then there exist integers $r_1,\ldots,r_n\geq 0$ such that
	$K\l \s^{r_1}(a_1),\ldots,\s^{r_i}(a_{i})\r$ is benign over
	$K\l \s^{r_1}(a_1),\ldots,\s^{r_{i-1}}(a_{i-1})\r$ with minimal standard generator $\s^{r_i}(a_{i})$ for $i=1,\ldots,n$.
\end{lemma}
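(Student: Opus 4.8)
The plan is to induct on $n$, reducing every step to the case $n=1$. To make the induction go through cleanly I would strengthen the statement by carrying along the auxiliary claim that $L\l a_1,\ldots,a_n\r$ is $\s$-radicial over $K\l \s^{r_1}(a_1),\ldots,\s^{r_n}(a_n)\r$. This auxiliary claim is essentially free: for any $r\ge 0$, an element of $L\l a\r$ is a rational expression in finitely many $\s^j(a)$ with coefficients in $L$, and applying a high enough power of $\s$ carries the coefficients into $K$ (as $L|K$ is $\s$-radicial and $\s(K)\subset K$) and each $\s^j(a)$ into $K\l\s^r(a)\r$, so the element becomes $\s$-radicial over $K\l\s^r(a)\r$. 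Granting this, the inductive step is immediate: with $r_1,\ldots,r_{n-1}$ already chosen, set $M=L\l a_1,\ldots,a_{n-1}\r$ and $M'=K\l\s^{r_1}(a_1),\ldots,\s^{r_{n-1}}(a_{n-1})\r$; the auxiliary claim makes $M|M'$ a $\s$-radicial extension, while $M\l a_n\r|M$ is benign with minimal standard generator $a_n$, so the case $n=1$ applied to $M|M'$ supplies $r_n$ with $M'\l\s^{r_n}(a_n)\r|M'$ benign (minimal standard generator $\s^{r_n}(a_n)$) and propagates the radicial statement. The earlier exponents are never re-chosen.

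For the heart of the matter, the case $n=1$, write $a=a_1$ and $d=[L(a):L]=\ld(L\l a\r|L)$, let $g\in L[x]$ be the separable minimal polynomial of $a$ over $L$ with roots $a=\rho_1,\ldots,\rho_d\in L(a)$, and expand $\rho_j=\sum_k\gamma_{jk}a^k$ with $\gamma_{jk}\in L$. Since $L|K$ is $\s$-radicial and $\s(K)\subset K$, I can choose $r=r_1$ large enough that $\s^{r'}$ carries all coefficients of $g$ and all $\gamma_{jk}$ into $K$ for every $r'\ge r$. Put $b=\s^r(a)$ and $h={}^{\s^r}\!g\in K[x]$. Then $h(b)=0$, and $h$ is monic of degree $d=[L(b):L]$ (the latter by benignness), so $h$ is the minimal polynomial of $b$ over $L$, in particular irreducible over $L$; as $K\subset L$, a factorisation over $K$ would be one over $L$, so $h$ is irreducible over $K$, whence $[K(b):K]=[L(b):L]=d$ and $K(b)$ is linearly disjoint from $L$ over $K$. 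Moreover $h$ is separable (apply $\s^r$ to a B\'ezout identity for $g,g'$, as in the proof of Remark \ref{rem: extend s}), and its roots are $\s^r(\rho_j)=\sum_k\s^r(\gamma_{jk})b^k\in K(b)$ by the choice of $r$; hence $h$ splits in $K(b)$ and $K(b)|K$ is Galois. The identical argument with $r$ replaced by $r+j$ shows $[K(\s^j(b)):K]=d$ and $K(\s^j(b))|K$ Galois for all $j\ge 0$.

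It remains to verify benignness and minimality. Benignness of $L\l a\r|L$ gives $[L(b,\s(b),\ldots,\s^i(b)):L]=d^{i+1}$ for all $i$. Put $F=K(b,\s(b),\ldots,\s^i(b))$, which is Galois over $K$ as a compositum of the Galois extensions $K(\s^j(b))|K$; then $[F:K]\le d^{i+1}$, while $LF=L(b,\ldots,\s^i(b))$ yields $[F:F\cap L]=[LF:L]=d^{i+1}$. Since $K\subset F\cap L$, this forces $[F:K]=d^{i+1}$ and $F\cap L=K$. Thus the $K(\s^j(b))$ are linearly disjoint over $K$, so $K\l b\r|K$ is benign with $\ld(K\l b\r|K)=d=[K(b):K]$; taking $i=1$ gives the middle condition $[K(b,\s(b)):K(b)]=d$, which together with the Galois condition exhibits $b$ as a standard generator, and $b$ is minimal by Lemma \ref{lemma: characterize benign}, since any standard generator $c$ satisfies $[K(c):K]\ge\ld(K\l b\r|K)=d=[K(b):K]$. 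This settles the case $n=1$, and with the auxiliary radicial claim the induction closes.

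I expect the main obstacle to be exactly this benignness step. The Galois property of $K(b)|K$ comes out cleanly once the conjugate-coefficients $\gamma_{jk}$ have been pushed into $K$, but controlling the degrees of the iterated compositum $K(b,\ldots,\s^i(b))$ over $K$ requires transferring the linear disjointness from the picture over $L$; the mechanism for this transfer is the identity $[F:F\cap L]=[LF:L]$ for the Galois extension $F|K$, squeezed against the a priori bound $[F:K]\le d^{i+1}$.
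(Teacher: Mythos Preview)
Your proof is correct and follows the same overall architecture as the paper's: induct on $n$, and for $n=1$ push the coefficients of the minimal polynomial and of the expressions for its conjugates into $K$ by applying a high enough power of~$\s$. You make explicit the auxiliary claim that $L\l a_1,\ldots,a_n\r$ is $\s$-radicial over $K\l\s^{r_1}(a_1),\ldots,\s^{r_n}(a_n)\r$; the paper needs this too for the inductive step but leaves it to the reader.

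The one genuine divergence is in how you transfer linear disjointness from $L$ down to $K$ in the case $n=1$. The paper argues directly with irreducibility: ${}^{\s^{j+l}}\!f$ is irreducible over $L(a,\ldots,\s^{j+l-1}(a))$ by benignness, hence a fortiori over the subfield $K(\s^j(a),\ldots,\s^{j+l-1}(a))$. You instead exploit that $F=K(b,\ldots,\s^i(b))$ is Galois over $K$ and use the standard identity $[LF:L]=[F:F\cap L]$, squeezing it against $[F:K]\le d^{i+1}$. Both arguments are short and clean; the paper's needs nothing beyond ``irreducible over a bigger field implies irreducible over a subfield'', while yours packages the count into a single Galois-theoretic identity and simultaneously yields $F\cap L=K$. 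Either way the conclusion and the minimality (via $[K(b):K]=\ld(K\l b\r|K)$ and Lemma~\ref{lemma: characterize benign}) follow.
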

\begin{proof}
We first treat the case $n=1$. So $a:=a_1$ is a minimal standard generator of $L\l a\r$ over $L$. We claim that $K\l\s^j(a)\r|K$ is benign with minimal standard generator $\s^j(a)$ for $j\gg 0$.

Let $f$ denote the minimal polynomial of $a$ over $L$.
Because $L(a)$ is the splitting field of $f$, every root of $f$ can be written as a polynomial in $a$ with coefficients in $L$. Since $L|K$ is $\s$-radicial, every root of $^{\s^{j}}f$ can be written as a polynomial in $\s^j(a)$ with coefficients in $K$ for $j\gg 0$. Therefore $K(\s^j(a))$ is the splitting field of $^{\s^j}f$ for $j\gg0$. Because $f$ is separable also $^{\s^j}f$ is separable. Therefore $K(\s^j(a))|K$ is Galois. %Because $K\l\s^j(a)\r$ is the splitting field for $^{\s^j}f,^{\s^{j+1}}f,\ldots$ it follows that also $K\l\s^j(a)\r|K$ is Galois.

Because $L$ is $\s$-radicial over $K$ there exists an integer $i$ such that $^{\s^j}f$ has coefficients in $K$ for $j\geq i$. It suffices to show that $^{\s^{j+l}}f$ is irreducible over $K(\s^j(a),\ldots,\s^{j+l-1}(a))$ for $l\in\N$ and $j\geq i$. Suppose the contrary. Then $^{\s^{j+l}}f$ is reducible over $L(a,\ldots,\s^{j+l-1}(a))$. This contradicts the fact that $L\l a\r|L$ is benign with minimal standard generator $a$.
This finishes the case $n=1$.

The general case now follows from the $n=1$ case by induction, with $K\l \s^{r_1}(a_1),\ldots,\s^{r_{n-1}}(a_{n-1})\r$ in place of $K$ and $L\l a_1,\ldots,a_{n-1}\r$ in place of $L$.
\end{proof}

Now we are prepared to prove the enhanced version of Babbitt's decomposition theorem.

\begin{theo} \label{theo: Babbitt}
Let $K$ be a $\s$-field and $L$ a finitely $\s$-generated $\s$-field extension of $K$ such that $L|K$ is Galois. Then there exists a chain of intermediate $\s$-fields
\[K\subset L_0\subset L_1\subset\cdots\subset L_{n-1}\subset L_n\subset L\]
such that $L_0=\pis(L|K)$, $L_{i}$ is benign over $L_{i-1}$ for $i=1,\ldots,n$ and $L$ is $\s$-radicial over $L_n$.
\end{theo}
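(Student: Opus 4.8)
The plan is to induct on the limit degree $d=\ld(L|K)$, erecting the benign tower on top of the strong core.

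First I would move the base field up to the strong core. Set $K_0=\pis(L|K)$. By Corollary~\ref{cor: pis Galois} the extension $K_0|K$ is Galois, so $L|K_0$ is again finitely $\s$-generated and Galois; as $K_0|K$ is finite, multiplicativity of the limit degree gives $\ld(L|K_0)=d$, while Lemma~\ref{lemma: pispis equals pis} gives $\pis(L|K_0)=K_0$. It therefore suffices to construct a chain $K_0=L_0\subset L_1\subset\cdots\subset L_n$ of intermediate $\s$-fields, each $L_i$ benign over $L_{i-1}$, with $L$ $\s$-radicial over $L_n$; prepending $K\subset K_0$ then yields the statement. The requirement $L_0=\pis(L|K)$ is in fact automatic for any such tower: once $L$ is $\s$-radicial over $L_n$, any element of $\pis(L|L_n)$ is simultaneously $\s$-separable and $\s$-radicial over $L_n$, and Proposition~\ref{prop: sseparable}(iv) forces it to lie in $L_n$; hence $\pis(L|L_n)=L_n$, and the strong core is correctly pinned down.

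For the base case $d=1$ the extension $L|K_0$ is finite as a field extension, hence finite and separable since Galois, and Lemma~\ref{lemma: finite implies sradicial over pis} shows $L$ is $\s$-radicial over $\pis(L|K_0)=K_0$; take $n=0$. For the inductive step $d>1$ I would pick a minimal substandard generator $a$ of $L|K_0$. Since $L$ is $\s$-radicial over $M:=K_0\langle a\rangle$ and $\s$-radiciality is transitive (Lemma~\ref{lemma: sradicial transitive}), it is enough to decompose $M|K_0$; here $a$ is a standard generator of $M|K_0$, $\ld(M|K_0)=d$, and $\pis(M|K_0)=K_0$. If $[K_0(a):K_0]=d$ then $M|K_0$ is benign by Corollary~\ref{cor: substandard implies almost benign} (equivalently Lemma~\ref{lemma: characterize benign}), and the tower $K_0\subset M$ with $n=1$ already works.

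The remaining case $[K_0(a):K_0]>d$ is where I expect the real difficulty. Here the count $[K_0(a,\s(a)):K_0]=d\,[K_0(a):K_0]<[K_0(a):K_0]^2$ (using Lemma~\ref{lemma: s(a) substandard generator} for $[K_0(\s(a)):K_0]=[K_0(a):K_0]$) shows the conjugate fields $K_0(\s^i(a))$ are not linearly disjoint over $K_0$, so a proper Galois subextension survives and one wants to split off a benign piece $K_0\subsetneq N\subseteq M$ with $1<\ld(N|K_0)\le d$, then recurse on $M|N$, whose limit degree $d/\ld(N|K_0)$ is strictly smaller. Two points must be controlled: (i) actually exhibiting such a benign $N$, which is the combinatorial heart of Babbitt's argument and is where the minimality of $a$ and Lemma~\ref{lemma: si(b)notin K} (no generator degenerates into $K_0$ under iterating $\s$) are used to analyse the tower $K_0(a,\s(a),\ldots,\s^i(a))$; and (ii) arranging that the recursion splices correctly, i.e. that the strong core produced at the next stage does not overshoot $N$. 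For the latter the decisive tool is Lemma~\ref{lemma: Babbit sradicial}: a benign tower is first built over a convenient $\s$-radicial extension of $K_0$ — where the standard generators are transparent — and then transported back down to $K_0$ by replacing each generator with a suitable iterate $\s^{r_i}(\cdot)$, so that no passage to the inversive closure is needed in order to invoke the inductive hypothesis. Combining the descended first piece with the tower obtained recursively, the self-consistency observation of the second paragraph guarantees that the resulting chain begins exactly at $\pis(L|K)$.
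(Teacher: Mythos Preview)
Your framework---induct on $\ld(L|K)$, replace $K$ by $K_0=\pis(L|K)$ via Lemma~\ref{lemma: pispis equals pis}, handle $d=1$ by Lemma~\ref{lemma: finite implies sradicial over pis}, and for $d>1$ work with a minimal substandard generator---matches the paper exactly, and you have correctly identified all the auxiliary lemmas that will be used. The aside that ``$L_0=\pis(L|K)$ is automatic'' is confused, however: the argument you give shows $\pis(L|L_n)=L_n$, which says nothing about the bottom of the tower. This is harmless only because you have already set $L_0=K_0$ by fiat; drop the aside.

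The genuine gap is in your ``remaining case'' $[K_0(a):K_0]>d$. You propose to split off a \emph{benign} piece $N$ directly and recurse on $M|N$, but you give no mechanism for producing such an $N$, and this is not how the paper proceeds. The paper instead runs a dichotomy on whether there exists an intermediate Galois $\s$-field $M$ with $1<\ld(M|K_0)<d$. If no such $M$ exists, one lets $b$ generate $K_0(a)\cap K_0(\s(a))$, computes $[K_0(a):K_0]=d\cdot[K_0(b):K_0]$, and shows $K_0\l b\r$ has limit degree either $1$ or $d$; the latter contradicts minimality of $a$, while the former forces $b\in\core(L|K_0)$, hence $\s^n(b)\in K_0$ by Corollary~\ref{cor: pis sradicial over core}, hence $b\in K_0$ by Lemma~\ref{lemma: si(b)notin K}. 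This gives $[K_0(a):K_0]=d$, so in fact your ``remaining case'' never occurs under this hypothesis. If such an $M$ does exist, the paper does \emph{not} look for a benign piece: it replaces $M$ by $\pis(L|M)$ (Galois over $K_0$ by Proposition~\ref{prop: pis Galois}), applies the induction hypothesis separately to $M|K_0$ and to $L|M$ (both have limit degree $<d$), and only then uses Lemma~\ref{lemma: Babbit sradicial} to descend the upper benign tower from $M$ across the $\s$-radicial step $M|L_n$ down to $L_n$. Your description of Lemma~\ref{lemma: Babbit sradicial} as transport ``back down to $K_0$'' misplaces the descent: it goes to the top $L_n$ of the lower tower, not to $K_0$.
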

\begin{proof}
The proof is by induction on $\ld(L|K)$. If $\ld(L|K)=1$, then $L|K$ is finite (\cite[Theorem XVII, p. 144]{Cohn:difference}). Thus, in this case, the claim follows from Lemma \ref{lemma: finite implies sradicial over pis}. So we can assume that $\ld(L|K)>1$.
Replacing $K$ with $\pis(L|K)$, we can assume that $\pis(L|K)=K$ (Lemma \ref{lemma: pispis equals pis}).

First, we also assume that $L|K$ contains no intermediate $\s$-field $M$ such that $M|K$ is Galois and $1<\ld(M|K)<\ld(L|K)$. We will show that the theorem holds with $n=1$.

Let $a$ be a minimal substandard generator of $L|K$. Then $K(a)$ and $K(\s(a))$ are Galois extensions of $K$.
Therefore (see e.g., \cite[Thereom VI, p. 6]{Cohn:difference})
\[[K(a,\s(a)):K]=\frac{[K(a):K]\cdot [K(\s(a)):K]}{[K(a)\cap K(\s(a)):K]}.\]
On the other hand,
\[[K(a,\s(a)):K]=[K(a,\s(a)):K(a)]\cdot[K(a):K]=\ld(L|K)\cdot[K(a):K].\]
Because $[K(\s(a)):K]=[K(a):K]$ by Lemma \ref{lemma: s(a) substandard generator}, the last two equations yield
\[[K(a):K]=\ld(L|K)\cdot [K(a)\cap K(\s(a)):K].\]
Since $\ld(L|K)>1$, we see that $[K(a)\cap K(\s(a)):K]<[K(a):K]$.

Let $b$ be a primitive element of $K(a)\cap K(\s(a))$ over $K$. Then $[K(b):K]<[K(a):K]$ and $K(b)|K$ is Galois. Therefore also $K\l b\r|K$ is Galois. By assumption, $\ld(K\l b\r|K)=1$ or $\ld(K\l b\r|K)=\ld(L|K)$.

Let us first assume that $\ld(K\l b\r|K)=\ld(L|K)$. Then $$[K(b,\s(b)):K(b)]\geq \ld(K\l b\r|K)=\ld(L|K)$$ and therefore
\[[K(b,\s(b)):K]= [K(b,\s(b)):K(b)]\cdot[K(b):K]\geq \ld(L|K)\cdot [K(a)\cap K(\s(a)):K].\]
But $K(b,\s(b))\subset K(\s(a))$ and so
\[[K(b,\s(b)):K]\leq [K(\s(a)):K]= [K(a):K]=\ld(L|K)\cdot [K(a)\cap K(\s(a)):K].\]
Thus $K(b,\s(b))=K(\s(a))$ and $[K(b,\s(b)):K(b)]=\ld(L|K)$. So $K\l b\r=K\l\s(a)\r$, $L$ is $\s$-radicial over $K\l b\r$ and $b$ is a substandard generator of $L|K$. Since $[K(b):K]<[K(a):K]$, this contradicts the fact that $a$ is a minimal substandard generator of $L|K$.

Hence $\ld(K\l b\r|K)=1$. Because $\pis(L|K)=K$, it follows from Corollary \ref{cor: pis sradicial over core} that $\s^n(b)\in K$ for some $n\in\N$. But then $b\in K$ by Lemma \ref{lemma: si(b)notin K}. So $K(a)\cap K(\s(a))=K$ and $[K(a):K]=\ld(L|K)$. Thus $K\l a\r|K$ is benign by Corollary \ref{cor: substandard implies almost benign}. Because $L|K\l a\r$ is $\s$-radicial, we see that the theorem is satisfied with $n=1$ and $L_1=K\l a\r$.

\medskip

Now we assume that there exists an intermediate $\s$-field $K\subset M\subset L$ such that $M|K$ is Galois and $1<\ld(M|K)<\ld(L|K)$. We know from Proposition \ref{prop: pis Galois} that $\pis(L|M)$ is Galois over $K$. Moreover $$\ld(\pis(L|M)|K)=\ld(\pis(L|M)|M)\cdot\ld(M|K)=\ld(M|K),$$
where the last equality uses Remark \ref{rem: strong core vs core} (ii).
Replacing $M$ with $\pis(L|M)$, we may assume that $\pis(L|M)=M$ (Lemma \ref{lemma: pispis equals pis}).
By Theorem \cite[Theorem 4.4.1, p. 292]{Levin:difference} an intermediate $\s$-field of a finitely $\s$-generated $\s$-field extension is finitely $\s$-generated. So $M$ is finitely $\s$-generated over $K$.
Applying the induction hypothesis to $M|K$ yields a sequence of $\s$-fields
$$K\subset L_1\subset\cdots\subset L_n\subset M,$$
where the extensions $L_1|K$ and $L_{i}|L_{i-1}$ ($2\leq i\leq n$) are benign and $M|L_n$ is $\s$-radicial.
Applying the induction hypothesis to $L|M$ yields a sequence of $\s$-fields
$$M\subset M_1\subset\cdots\subset M_m\subset L,$$
where the extensions $M_1|M$ and $M_{i}|M_{i-1}$ ($2\leq i\leq m$) are benign and $L|M_m$ is $\s$-radicial.

Let $a_1$ be a minimal standard generator of $M_1|M$ and for $i=2,\ldots, m$ let $a_i\in M_{i}$ be a minimal standard generator of $M_i|M_{i-1}$. As $M|L_n$ is $\s$-radicial, by Lemma \ref{lemma: Babbit sradicial} there exist $r_1,\ldots,r_{m}\in\N$ such that $L_n\l \s^{r_1}(a_1),\ldots,\s^{r_i}(a_i)\r$ is benign over $L_n\l \s^{r_1}(a_1),\ldots,\s^{r_{i-1}}(a_{i-1})\r$ for $i=1,\ldots,m$.

We have constructed a sequence of benign $\s$-field extensions
$$K\subset L_1\subset\cdots\subset L_n\subset L_n\l \s^{r_1}(a_1)\r\subset\cdots\subset L_n\l \s^{r_1}(a_1),\ldots,\s^{r_m}(a_m)\r$$
inside $L$. It remains to see that $L$ is $\s$-radicial over $L_n\l \s^{r_1}(a_1),\ldots,\s^{r_m}(a_m)\r$. But $L$ is $\s$-radicial over $M_m$ and $M_m=M\l a_1,\ldots,a_m\r$ is $\s$-radicial over $L_n\l \s^{r_1}(a_1),\ldots,\s^{r_m}(a_m)\r$. Therefore $L$ is $\s$-radicial over $L_n\l \s^{r_1}(a_1),\ldots,\s^{r_m}(a_m)\r$.
\end{proof}

\begin{rem}
Let us summarize, using the notation of Theorem~\ref{theo: Babbitt}, the features that make our version of Babbitt's decomposition more versatile than the original.
\begin{enumerate}
\item As already mentioned, it applies to difference field extensions $L|K$ where the base field $K$ is not necessarily inversive.
\item We make the relationship of $L_n$ and $L$ explicit. Indeed, in our version, the tower $K\subset L_n\subset L$ gives a decomposition of $L|K$ into a $\sigma$-separable part and a $\sigma$-radicial part, whereas the original theorem only asserts that the inversive closures of $L_n$ and $L$ coincide.
\end{enumerate}
\end{rem}

\section{Applications}\label{section:appli}

\subsection{A contribution to the study of difference algebraic groups}

As mentioned in the introduction, an initial motivation for studying \ssetale{} \ks-algebras is their use for understanding the difference connected components of difference algebraic groups. In this article we do not want to go through the details of the definition of a difference algebraic group. (The interested reader is referred to \cite{Wibmer:AffineDifferenceAlgebraicGroups} and \cite{Wibmer:Habil}.) We simply rely on the fact that the category of difference algebraic groups is anti-equivalent to the category of \ks-Hopf algebras that are finitely $\s$-generated as \ks-algebras (\cite[Prop. 2.3]{Wibmer:AffineDifferenceAlgebraicGroups}). Here a \ks-algebra $R$ with the structure of a Hopf algebra over $k$ is called a \emph{\ks-Hopf algebra} if the structure maps of the Hopf algebra, i.e., the comultiplication $\Delta\colon R\to R\otimes_k R$, the antipode $S\colon R\to R$ and the counit $\varepsilon\colon R\to k$, are morphisms of difference rings.

\begin{fact}[{\cite[Theorem 4.5]{Wibmer:AffineDifferenceAlgebraicGroups}}]\label{hopf-si-fg}
A \ks-Hopf subalgebra of a \ks-Hopf algebra which is finitely $\s$-generated over $k$, is finitely $\s$-generated over $k$.
\end{fact}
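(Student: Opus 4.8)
The plan is to transport to the difference setting the classical proof that a Hopf subalgebra of a finitely generated commutative Hopf algebra is finitely generated (see \cite[Section 16]{Waterhouse:IntroductiontoAffineGroupSchemes}, going back to Takeuchi). That argument rests on two pillars: first, that a commutative Hopf algebra is faithfully flat over any Hopf subalgebra, so that a Hopf subalgebra $B$ can be recovered as the coinvariants $R^{\operatorname{co} C}$ of the quotient Hopf algebra $C=R/B^+R$; and second, on the Noetherianity of the ambient finitely generated algebra. Both pillars require a difference-algebraic substitute, and the second is where I expect the real work to lie.

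Write $R$ for our finitely $\s$-generated \ks-Hopf algebra and $B\subseteq R$ for a \ks-Hopf subalgebra. First I would establish the difference analog of the fact that every Hopf subalgebra is the directed union of its finitely $\s$-generated \ks-Hopf subalgebras: given $b\in B$, the smallest subcoalgebra $D\ni b$ is finite-dimensional over $k$ by the fundamental theorem of coalgebras, its $\s$-translates $\s^j(D)$ span a $\s$-stable subcoalgebra, and the \ks-subalgebra $\s$-generated by $D$ and closed under the antipode is a finitely $\s$-generated \ks-Hopf subalgebra containing $b$. Thus $B=\bigcup_\lambda B_\lambda$ is a directed union of finitely $\s$-generated \ks-Hopf subalgebras, and it suffices to prove that this chain stabilizes.

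The augmentation ideal $B^+=\ker(\varepsilon|_B)$ is a $\s$-ideal of $B$, so $B^+R$ and each $B_\lambda^+R$ are $\s$-ideals of $R$ forming an ascending chain with $\bigcup_\lambda B_\lambda^+R=B^+R$. Setting $C=R/B^+R$ and $C_\lambda=R/B_\lambda^+R$ yields quotient \ks-Hopf algebras with surjective transition maps and $C=\varinjlim_\lambda C_\lambda$. The key structural input is the difference analog of faithful flatness: I would show (or invoke, in the spirit of \cite{Wibmer:AffineDifferenceAlgebraicGroups}) that $R$ is faithfully flat over each of $B$ and $B_\lambda$, and that consequently $B=R^{\operatorname{co} C}$ and $B_\lambda=R^{\operatorname{co} C_\lambda}$, where the coaction is $(\id\otimes\pi)\circ\Delta\colon R\to R\otimes_k C$. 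Hence an equality $B_\lambda^+R=B^+R$ forces $C_\lambda=C$ and therefore $B_\lambda=B$; it remains to produce such a $\lambda$.

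This last step is the main obstacle, and where the Noetherian argument must be replaced. Since $R$ is finitely $\s$-generated over $k$, the difference Ritt--Raudenbush basis theorem supplies the ascending chain condition on \emph{perfect} $\s$-ideals, so the perfect closures $\{B_\lambda^+R\}$ stabilize. The difficulty is bridging from perfect $\s$-ideals back to the actual Hopf ideals $B_\lambda^+R$: unlike in the Noetherian case, stabilization of perfect closures only identifies the $\s$-reduced quotients, so one must control the non-$\s$-reduced part. I expect the cleanest route is to first reduce to the case where $R$ is $\s$-reduced, or to argue that the coinvariants $R^{\operatorname{co} C}$ depend only on the $\s$-reduced quotient of $C$, after which the perfect-ideal chain condition directly yields a $\lambda$ with $\{B_\lambda^+R\}=\{B^+R\}$, hence $C_\lambda=C$ and $B_\lambda=B$. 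Carrying out this reduction carefully, together with verifying the difference faithful-flatness and coinvariants statements, constitutes the technical heart of the proof.
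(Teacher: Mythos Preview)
The paper does not prove this statement; it records it as a Fact with a citation to \cite[Theorem 4.5]{Wibmer:AffineDifferenceAlgebraicGroups}, so there is no in-paper argument to compare against.

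Regarding your outline itself: the first two ingredients---writing $B$ as a filtered union of finitely $\s$-generated \ks-Hopf subalgebras $B_\lambda$, and invoking faithful flatness of $R$ over each $B_\lambda$ to identify $B_\lambda$ with the coinvariants $R^{\operatorname{co}\,C_\lambda}$---are sound; faithful flatness of a commutative Hopf algebra over any Hopf subalgebra is a ring-theoretic fact that needs no $\s$-input. The genuine gap is the stabilization step, and neither of your proposed bridges closes it. The Ritt--Raudenbush theorem yields the ascending chain condition only on \emph{perfect} $\s$-ideals; the Hopf $\s$-ideals $B_\lambda^+R$ are not perfect in general, and stabilization of their perfect closures says only that the $\s$-reduced quotients of the $C_\lambda$ eventually agree. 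Passing to $R$ $\s$-reduced does not help, since a $\s$-ideal of a $\s$-reduced ring need not be perfect, so the quotients $C_\lambda$ can still fail to be $\s$-reduced and the chain condition still does not apply to the $B_\lambda^+R$. And the assertion that $R^{\operatorname{co}\,C}$ depends only on the $\s$-reduction of $C$ is unsupported: a surjection $C\twoheadrightarrow C'$ of quotient Hopf algebras gives only an inclusion $R^{\operatorname{co}\,C}\subseteq R^{\operatorname{co}\,C'}$, with no mechanism forcing equality when the kernel consists of $\s$-nilpotents. As written, the proposal is a plausible plan whose decisive step remains open; completing it would require a finiteness input different from Ritt--Raudenbush, for instance a numerical invariant on \ks-Hopf algebras that strictly drops under proper quotients.
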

\begin{theo}\label{ssetale-hopf}
 Let $R$ be a \ks-Hopf algebra. Then $\pis(R)$ is a \ks-Hopf subalgebra of $R$. 
 Moreover, if $R$ is finitely $\s$-generated over $k$, then $\pis(R)$ is strongly $\s$-\'etale over $k$.
 %\qed
\end{theo}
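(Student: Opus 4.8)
The plan is to derive both assertions from the monoidal functoriality of the strong core (Corollary~\ref{monoidal} and Lemma~\ref{lemma: pis compatible with tensor product}) together with Fact~\ref{hopf-si-fg}. For the first assertion, $\pis(R)$ is already a \ks-subalgebra of $R$, so it suffices to verify that the comultiplication, antipode and counit of $R$ restrict appropriately, thereby endowing $\pis(R)$ with the structure of a \ks-Hopf subalgebra.

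Concretely, I would argue as follows. The comultiplication $\Delta\colon R\to R\otimes_k R$ is a morphism of \ks-algebras, so $\Delta(\pis(R))\subset\pis(R\otimes_k R)$ by Corollary~\ref{monoidal}; combined with the identity $\pis(R\otimes_k R)=\pis(R)\otimes_k\pis(R)$ of Lemma~\ref{lemma: pis compatible with tensor product}, this gives $\Delta(\pis(R))\subset\pis(R)\otimes_k\pis(R)$, as required. The antipode $S\colon R\to R$ is likewise a morphism of \ks-algebras --- here one uses that $R$ is commutative, so that $S$ is an algebra homomorphism rather than merely an anti-homomorphism --- whence $S(\pis(R))\subset\pis(R)$, again by Corollary~\ref{monoidal}. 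Finally, the counit $\varepsilon\colon R\to k$ maps $\pis(R)$ into $k$ automatically; equivalently, $\varepsilon(\pis(R))\subset\pis(k)=k$, since $k$ is \ssetale{} over itself. Together these show that $\pis(R)$ is a \ks-Hopf subalgebra of $R$.

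For the second assertion, I would invoke Fact~\ref{hopf-si-fg}: as $\pis(R)$ is a \ks-Hopf subalgebra of the finitely $\s$-generated Hopf algebra $R$, it is itself finitely $\s$-generated over $k$. To conclude that $\pis(R)$ is \ssetale{}, I would note that if $\pis(R)=k\{a_1,\ldots,a_m\}$, then each $a_i$ lies in some \ssetale{} \ks-subalgebra $R_i\subset\pis(R)$; the compositum $R_1\cdots R_m$ is a quotient of $R_1\otimes_k\cdots\otimes_k R_m$ and hence \ssetale{} by Lemma~\ref{lemma: sfinite algebras}, so $\pis(R)$, being a \ks-subalgebra of it, is \ssetale{} by the same lemma.

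I do not anticipate a genuine obstacle, since the argument merely assembles previously established structural results. The substantive work lies in the tensor compatibility $\pis(R\otimes_k R)=\pis(R)\otimes_k\pis(R)$, which is precisely what forces $\Delta$ to land in $\pis(R)\otimes_k\pis(R)$ rather than only in $\pis(R\otimes_k R)$, and which has already been proved in Lemma~\ref{lemma: pis compatible with tensor product}. The one point meriting care is the treatment of the antipode, where commutativity of $R$ must be invoked to regard $S$ as a bona fide \ks-algebra morphism so that Corollary~\ref{monoidal} applies.
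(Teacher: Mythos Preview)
Your proof is correct and follows essentially the same approach as the paper, which simply invokes Corollary~\ref{monoidal} for the first claim and Fact~\ref{hopf-si-fg} for the second. You have unpacked the content of Corollary~\ref{monoidal} for each of the three Hopf structure maps and spelled out why finite $\s$-generation of $\pis(R)$ forces it to be \ssetale{} (a fact the paper records just after Definition~\ref{defi:  pis}); in your last step note that in fact $R_1\cdots R_m=\pis(R)$, so the appeal to ``subalgebra'' is unnecessary, but the conclusion is unaffected.
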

\begin{proof}
The first claim follows from Corollary~\ref{monoidal}. For example, to show that $\Delta(\pis(R))\subseteq \pis(R)\otimes_k\pis(R)$, one uses that $\Delta(\pis(R))\subset\pis(R\otimes_k R)$ and that $\pis(R\otimes_k R)=\pis(R)\otimes_k \pis(R)$. The second claim follows from 
Fact~\ref{hopf-si-fg}.
\end{proof}
\begin{rem}
Let $R$ be a \ks-algebra and let $S$ be the union of all \ks-subalgebras of $R$ that are \'{e}tale as $k$-algebras.
The following example shows that
\begin{enumerate}
\item $S$ need not be finitely $\s$-generated over $k$, even if $R$ is;
\item if $R$ is a \ks-Hopf algebra, $S$ need not be a \ks-Hopf subalgebra of $R$.
\end{enumerate}
Thus, working with strongly $\s$-\'etale algebras (and the condition of $\sigma$-separability) in the definition of $\pis$ is crucial for Conjecture~\ref{fin-si-gen}, for obtaining nice functorial properties from Corollary~\ref{monoidal},  and for Theorem~\ref{ssetale-hopf}.
\end{rem}

% for a \ks-Hopf algebra $R$, the union of all \ks-subalgebras of $R$ that are \'{e}tale as $k$-algebras is not a Hopf subalgebra of $R$.
%It also illustrates that the assumption of $\s$-separability is crucial for obtaining nice functorial properties for $\pis(R)$:

\begin{ex} \label{ex: core not Hopfalgebra}
	A geometrically more intuitive description of this example can be found in \cite[Ex. 4.2.15]{Wibmer:Habil}. We refer the reader to Sections 2.2 and 2.3 in \cite{Levin:difference} for definitions and basic constructions regarding difference polynomial ideals.
	
	Let $R_1=k\{y\}/[y^2-1,\s(y)-1]$ denote the quotient of the univariate $\s$-polynomial ring modulo the $\s$-ideal $\s$-generated by $y^2-1$ and $\s(y)-1$. 
	%(See Sections 2.2 and 2.3 in \cite{Levin:difference} for definitions.)
	Similarly, let $R_2=k\{y\}/[y^2-1]$. If we denote by $z$ the image of $y$ in $R_1$ respectively $R_2$, one may check that the formulas $\Delta(z)=z\otimes z$, $S(z)=z$ and $\varepsilon(z)=1$ define the structure of a \ks-Hopf algebra on $R_1$ respectively $R_2$.
	So $R=R_1\otimes_k R_2$, equipped with the product Hopf algebra structure, is naturally a \ks-Hopf algebra. 
	
Let $S$ be the union of all \ks-subalgebras of $R$ that are \'{e}tale as $k$-algebras. 
%We will show that $S$ is not a Hopf subalgebra of $R$.
%	Let $G$ be the $\s$-algebraic subgroup of $\Gm^2$ given by
%	$$G(R)=\left\{\begin{pmatrix} g_1 \\ g_2\end{pmatrix}\in\Gm^2(R) \ \Big| \ g_1^2=1,\ \s(g_1)=1,\ g_2^2=1\right\}$$
%	for any \ks-algebra $R$. We will show that $\core(k\{G\}|k)$ is not a \ks-Hopf subalgebra of $k\{G\}$.
	Let us assume that the characteristic of $k$ is not equal to $2$, so that $R$ is a union of \'{e}tale $k$-algebras.
	% With $H_1(R)=\{g\in\Gm(R)|\ g^2=1,\ \s(g)=1\}$ and $H_2(R)=\{g\in\Gm(R)|\ g^2=1\}$ we have $G=H_1\times H_2$.
	We have $R_1=ke_1\oplus ke_2$ for orthogonal idempotent elements $e_1,e_2\in R_1$ with $\s(e_1)=1$ and $\s(e_2)=0$. So $$R=R_1\otimes_k R_2=(e_1\otimes R_2)\oplus (e_2\otimes R_2).$$ For any element $a\in e_2\otimes R_2$ we have $\s(a)=0$ and so $k\{a\}=k[a]$ is an \'{e}tale $k$-algebra. This shows that $e_2\otimes R_2$ is contained in $S$. 
In particular, $S$ has infinite dimension as a $k$-vector space. Thus, $S$ is not finitely $\s$-generated over $k$, and Fact~\ref{hopf-si-fg}  shows that it is not a \ks-Hopf subalgebra of $R$. On the other hand, $\pis(R)=k$.

%Suppose $S$ is a Hopf subalgebra of $R$. Because $S$ is stable under $\s$, we see that then $S$ must be a \ks-Hopf-subalgebra of $R$. By Theorem \cite[Theorem 4.5]{Wibmer:AffineDifferenceAlgebraicGroups} a \ks-Hopf subalgebra of a \ks-Hopf algebra that is finitely $\s$-generated over $k$, is finitely $\s$-generated over $k$. Thus $S$ is finitely $\s$-generated over $k$ and must therefore be finite dimensional as a $k$-vector space; a contradiction.
%	
%	 Because $R$ is finitely $\s$-generated over $k$ it follows f $\core(k\{G\}|k)$ is finitely $\s$-generated over $k$ by Theorem \ref{theo: finiteness2} and it follows that $\core(k\{G\}|k)$ is a finite dimensional $k$-vector space; a contradiction. So $\core(k\{G\}|k)$ is not a \ks-Hopf subalgebra of $k\{G\}$.
\end{ex}

\subsection{An application to compatibility}

Recall (\cite[Def. 5.1.1]{Levin:difference}) that two extensions of $\s$-fields $L|K$ and $L'|K$ are called \emph{compatible} if there exists a $\s$-field extension of $K$ that contains isomorphic copies of $L|K$ and $L'|K$.

The classical compatibility theorem (\cite[Theorem 5.4.22]{Levin:difference}) states that two extensions of $\s$-fields are compatible if and only if their cores are compatible. We will improve on this by showing that the core can be replaced by $\pis$. Note that for a $\s$-field extension $L|K$ one has $\pis(L|K)\subset\core(L|K)$.

\begin{lemma} \label{lemma: sradicial compatible}
	Let $L|K$ and $L'|K$ be two extensions of $\s$-fields. If $L|K$ is $\s$-radicial then $L|K$ and $L'|K$ are compatible.
\end{lemma}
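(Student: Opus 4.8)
The plan is to take the inversive closure of $L'$ as the desired common extension. The starting observation is that a $\s$-radicial extension has nowhere to go but into $K^*$: as recalled just after the definition of $\s$-radicial, every $\s$-radicial extension of $K$ is contained in the inversive closure $K^*$ of $K$ (concretely, for $a\in L$ one has $\s^n(a)\in K$ for some $n$, and $a$ is recovered in $K^*$ as $\s^{-n}(\s^n(a))$). Hence there is an embedding $L\hookrightarrow K^*$ over $K$, and it suffices to produce a single $\s$-field extension of $K$ containing copies of both $K^*$ and $L'$, since such an extension will then automatically contain a copy of $L$.

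Next I would form the inversive closure $(L')^*$ of $L'$. Since $(L')^*$ is inversive and we have the chain of $\s$-field inclusions $K\hookrightarrow L'\hookrightarrow (L')^*$, the universal property of the inversive closure (\cite[Def. 2.1.6]{Levin:difference}) furnishes a $\s$-morphism $K^*\to (L')^*$ extending $K\hookrightarrow (L')^*$. This morphism is the identity on $K$ and, being a homomorphism of fields, is injective; thus $K^*$ embeds into $(L')^*$ over $K$.

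Composing $L\hookrightarrow K^*\hookrightarrow (L')^*$ gives an embedding of $L|K$ into $(L')^*$, while $L'|K$ sits inside $(L')^*$ tautologically. Therefore $(L')^*$ is a $\s$-field extension of $K$ containing isomorphic copies of both $L|K$ and $L'|K$, which is exactly the definition of compatibility. I expect the only step meriting care to be the universal-property argument producing the embedding $K^*\hookrightarrow (L')^*$ over $K$; the remainder is a direct unwinding of the definitions of $\s$-radicial and of compatibility.
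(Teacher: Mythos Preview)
Your proposal is correct and follows essentially the same approach as the paper: both take $(L')^*$ as the common $\s$-overfield, and both exploit that a $\s$-radicial extension of $K$ sits inside an inversive closure of $K$, which in turn embeds over $K$ into $(L')^*$. The paper phrases the key step as ``$L^*$ is an inversive closure of $K$, hence embeds into $(L')^*$ by uniqueness,'' whereas you route through $L\hookrightarrow K^*\hookrightarrow (L')^*$ via the universal property; these are the same argument.
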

\begin{proof}
	Let $L'^*$ denote the inversive closure of $L'$. (See \cite[Def. 2.1.6]{Levin:difference}.) Then $L'^*$ contains an inversive closure of $K$. On the other hand, because $L|K$ is $\s$-radicial, the inversive closure $L^*$ of $L$ is an inversive closure of $K$. So, by the uniqueness of the inversive closure, there exists a $K$-embedding of $L^*$ into $L'^*$, which restricts to a $K$-embedding of $L$ into $L'^*$. Thus $L'^*$ contains isomorphic copies of $L|K$ and $L'|K$.
\end{proof}

\begin{theo}
	Let $L|K$ and $L'|K$ be two extensions of $\s$-fields. Then $L|K$ and $L'|K$ are compatible if and only if $\pis(L|K)|K$ and $\pis(L'|K)|K$ are compatible.
\end{theo}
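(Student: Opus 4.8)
The plan is to prove both directions, with the forward direction being essentially trivial and the reverse direction carrying all the weight. For the forward direction, suppose $L|K$ and $L'|K$ are compatible, so both embed $K$-$\s$-isomorphically into some $\s$-field extension $\Omega$ of $K$. Then $\pis(L|K)$ and $\pis(L'|K)$ are intermediate $\s$-fields that also embed into $\Omega$, so they are compatible. (One should note $\pis(L|K)\subset\core(L|K)\subset L$ is a genuine intermediate $\s$-field, which was established in the earlier remarks.)

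For the reverse direction, the strategy is to reduce to the classical compatibility theorem (\cite[Theorem 5.4.22]{Levin:difference}), which asserts that $L|K$ and $L'|K$ are compatible if and only if $\core(L|K)|K$ and $\core(L'|K)|K$ are compatible. So it suffices to show that if $\pis(L|K)|K$ and $\pis(L'|K)|K$ are compatible, then $\core(L|K)|K$ and $\core(L'|K)|K$ are compatible. The key input is Corollary~\ref{cor: pis sradicial over core}, which tells us that $\core(L|K)$ is $\s$-radicial over $\pis(L|K)$, and likewise $\core(L'|K)$ is $\s$-radicial over $\pis(L'|K)$. Thus the cores differ from the strong cores only by a $\s$-radicial extension, and the philosophy (made precise by Lemma~\ref{lemma: sradicial compatible}) is that $\s$-radicial extensions impose no obstruction to compatibility.

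Concretely, abbreviate $N=\pis(L|K)$, $N'=\pis(L'|K)$, $C=\core(L|K)$, $C'=\core(L'|K)$. By hypothesis $N$ and $N'$ are compatible, so there is a $\s$-field extension $\Omega$ of $K$ containing $K$-copies of $N$ and of $N'$; identify them with $\s$-subfields of $\Omega$. The plan is now to extend these embeddings to $C$ and $C'$. Since $C|N$ is $\s$-radicial, Lemma~\ref{lemma: sradicial compatible} applied over the base $N$ shows $C|N$ is compatible with any $N$-extension, in particular with $\Omega|N$; unwinding the proof of that lemma, the inversive closure $C^*$ is an inversive closure of $N$, and hence of any copy of $N$ inside $\Omega$, so $C$ embeds $N$-$\s$-linearly into (an inversive closure of) $\Omega$. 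Applying the same reasoning to $C'|N'$ inside the same ambient inversive closure $\Omega^*$, we obtain $K$-$\s$-embeddings of both $C$ and $C'$ into $\Omega^*$. Therefore $C$ and $C'$ are compatible, and by the classical theorem $L|K$ and $L'|K$ are compatible.

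The main obstacle is ensuring that the two $\s$-radicial extensions $C|N$ and $C'|N'$ can be accommodated \emph{simultaneously} inside a single $\s$-field extension of $K$, rather than each separately. The clean way around this is to first enlarge $\Omega$ to its inversive closure $\Omega^*$: because an inversive closure absorbs every $\s$-radicial extension of anything it contains (as every $\s$-radicial extension of a $\s$-field sits inside its inversive closure, and inversive closures are unique up to $\s$-isomorphism), both $C$ and $C'$ land inside $\Omega^*$ automatically once $N$ and $N'$ do. One must take care that the embeddings of $C$ and $C'$ restrict correctly to the already-chosen embeddings of $N$ and $N'$, which follows from the uniqueness of the inversive closure relative to a fixed base, exactly as in the proof of Lemma~\ref{lemma: sradicial compatible}. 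With this observation the argument closes.
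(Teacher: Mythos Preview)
Your proof is correct and follows essentially the same approach as the paper: both directions are handled identically, and for the nontrivial implication you invoke the classical compatibility theorem, Corollary~\ref{cor: pis sradicial over core}, and Lemma~\ref{lemma: sradicial compatible} just as the paper does. The only cosmetic difference is that the paper builds the common overfield in two sequential steps (extend $M$ to $M_1\supset\core(L|K)$, then $M_1$ to $M_2\supset\core(L'|K)$, each time via Lemma~\ref{lemma: sradicial compatible}), whereas you pass once to the inversive closure $\Omega^*$ and embed both cores simultaneously; this amounts to unfolding the proof of Lemma~\ref{lemma: sradicial compatible} rather than citing it twice.
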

\begin{proof}
	A possible line of proof would be to follow the proof of the classical compatibility theorem, but to use Theorem \ref{theo: Babbitt} instead of the classical version of Babbitt's decomposition. However, using Corollary \ref{cor: pis sradicial over core} and Lemma \ref{lemma: sradicial compatible}, we can easily deduce a proof from the classical compatibility theorem:
	
	A $\s$-field extension containing isomorphic copies of $L|K$ and $L'|K$ obviously also contains isomorphic copies of $\pis(L|K)|K$ and $\pis(L'|K)|K$.
	To prove the non-trivial implication, it suffices, by the classical compatibility theorem, to show that $\core(L|K)$ and $\core(L'|K)$ are compatible. By assumption, there exists a $\s$-field extension $M$ of $K$, containing $\pis(L|K)|K$ and $\pis(L'|K)|K$. By Corollary \ref{cor: pis sradicial over core} and Lemma \ref{lemma: sradicial compatible} there exists a $\s$-field extension $M_1$ of $\pis(L|K)$ containing $M$ and $\core(L|K)$. Similarly, by Corollary~\ref{cor: pis sradicial over core} and Lemma \ref{lemma: sradicial compatible} there exists a $\s$-field extension $M_2$ of $\pis(L'|K)$ containing $M_1$ and $\core(L'|K)$. Thus $M_2$ contains $\core(L|K)$ and $\core(L'|K)$.
\end{proof}

\bibliographystyle{alpha}
\bibliography{bibdata}
\end{document}